\title{Numerical Approximation of Random Periodic Solutions of Stochastic Differential Equations}
\author[1]{Chunrong Feng}
\author[1]{Yu Liu}
\author[1]{Huaizhong Zhao}
\affil[1]{Department of Mathematical Sciences, Loughborough
University, LE11 3TU, UK}
\affil[ ]{C.Feng@lboro.ac.uk, Y.Liu4@lboro.ac.uk, H.Zhao@lboro.ac.uk}
\date{}
\newtheorem{theorem}{Theorem}[section]
\newtheorem{definition}[theorem]{Definition}
\newtheorem{lemma}[theorem]{Lemma}
\newtheorem{corollary}[theorem]{Corollary}
\newtheorem{proposition}[theorem]{Proposition}
\newtheorem{remark}[theorem]{Remark}
\newtheorem{example}[theorem]{Example}
\numberwithin{equation}{section}
\newcommand{\N}{{\mathbb{N}}}  
\newcommand{\Z}{{\mathbb{Z}}}  
\newcommand{\R}{{\mathbb{R}}}  
\newcommand{\E}{{\mathbb{E}}}  
\newcommand{\X}{{\mathbb{X}}}  
\newcommand{\dt}{{\Delta t}}
\newcommand{\dW}{{\Delta W}}
\newcommand{\dZ}{{\Delta Z}}
\newcommand{\At}{I+A\Delta t}
\newcommand{\at}{1-\alpha\Delta t}
\newcommand{\norm}[1]{\left\lVert#1\right\rVert}
\newcommand{\abs}[1]{\left|#1\right|}
\newcommand{\rbrac}[1]{\left(#1\right)}
\newcommand{\sbrac}[1]{\left[#1\right]}
\newcommand{\hx}{\widehat{X}}
\newcommand{\hxit}[1]{\widehat{X}_{-k\tau+#1\Delta t}^{-k\tau}}
\newcommand{\hu}{\widehat{\Upsilon}}     
\newcommand{\huip}[1]{\widehat{\Upsilon}_+(\widehat{X}_{-k\tau+#1\Delta t}^{-k\tau})}                        
\newcommand{\huim}[1]{\widehat{\Upsilon}_-(\widehat{X}_{-k\tau+#1\Delta t}^{-k\tau})}                        
\newcommand{\hk}[1]{\widehat{K}_{#1}}
\newcommand{\xtr}{X^{-k\tau}_r}
\newcommand{\xts}{X^{-k\tau}_s}
\begin{document}

\maketitle

\begin{abstract}
In this paper, we discuss the numerical approximation of random periodic solutions (r.p.s.) of stochastic differential equations (SDEs) with multiplicative noise. We prove the existence of the random periodic solution as the limit of the pull-back flow when the starting time tends to $-\infty$ along the multiple integrals of the period. As the random periodic solution is not explicitly constructible, it is useful to study the numerical approximation. We discretise the SDE using the Euler-Maruyama scheme and moldiflied Milstein scheme. Subsequently we obtain the existence of the random periodic solution as the limit of the pull-back of the discretised SDE. We prove that the latter is an approximated random periodic solution with an error to the exact one at the rate of $\sqrt {\Delta t}$ in the mean-square sense in Euler-Maruyama method and $\Delta t$ in the Milstein method. We also obtain the weak convergence result for the approximation of the periodic measure.

\noindent
{\bf Keywords:} random periodic solution, periodic measure, Euler-Maruyama method, modified Milstein method, infinite horizon, rate of convergence,  pull-back, weak convergence.
\medskip

\noindent 
{\bf Mathematics Subject Classifications (2000): 37H99, 60H10, 60H35.}
\end{abstract}

\pagestyle{fancy}
\fancyhf{}
\fancyhead[LE,RO]{\thepage}
\fancyhead[LO]{\small{Numerical Approximation of Random Periodic Solutions of SDEs}}
\fancyhead[RE]{\small{C. R. Feng, Y. Liu and  H. Z. Zhao}}

\section {Introduction}

Periodic solution has been a central concept in the theory of dynamical systems since Poincar\'e's pioneering work \cite{poincare}. 
As the random counterpart of periodic solution, the 
concept of random periodic solutions (RPS) began to be addressed recently for a $C^1$-cocycle in \cite{10}. Later the definition of random periodic solutions  and  their existence for semi-flows generated by non-autonomous SDEs and SPDEs with additive noise were given in \cite{11},\cite{12}. 
Denote by $\Delta:=\{(t,s)\in \R^2,s\leq t\}$. Let $\mathbb X$ be a separable Banach space, 
$(\Omega, {\cal F},P, (\theta_t)_{t\in \mathbb R})$ be a metric dynamical system. 
Consider a stochastic periodic semi-flow $u:\Delta\times\Omega\times \X\rightarrow \X$ of period $\tau$, which satisfies the semiflow relation 
\vskip-20pt
\begin{eqnarray}\label{zhao3}
u(t,r,\omega)=u(t,s,\omega)\circ u(s,r,\omega), \label{equality of semi-flow property}
\end{eqnarray}
\vskip-7pt
\noindent
and the periodic property
\vskip-20pt
\begin{eqnarray}\label{zhao4}
u(t+\tau,s+\tau,\omega)=u(t,s,\theta_{\tau}\omega)\label{periodicity of semi-flow property},
\end{eqnarray}
\vskip-4pt
\noindent
for all $r\leq s\leq t$. SDEs and SPDEs with time-dependent coefficients which are periodic in time generate periodic semiflows  
satisfying (\ref{zhao3}) and (\ref{zhao4}) (\cite{11}-\cite {feng-wu-zhao1}).  
\begin{definition} (\cite{11},\cite{12})
A random periodic path of period $\tau$ of the semi-flow $u:\Delta\times\Omega\times \X\rightarrow \X$ is an $\mathcal{F}$-measurable map $Y:\mathbb R\times\Omega\rightarrow \X$ such that for a.e. $\omega\in\Omega$,
$$u(t,s,\omega)Y(s,\omega)=Y(t,\omega),\ Y(s+\tau,\omega)=Y(s,\theta_\tau\omega),\  for\ any\ (t,s)\in\Delta. $$
\end{definition}

It has been proved that random periodic solutions exist for many SDEs and SPDEs (\cite{11}-\cite {feng-wu-zhao1}).
Recently, ``equivalence" of random periodic paths and periodic measures has
been proved in \cite{feng-zhao} and some results of the ergodicity of periodic measures have been obtained.
Note many phenomena in the real world have both periodic and random nature,
e.g. daily temperature, energy consumption, airline passenger volumes, $CO_2$ concentration etc. 
The concept and its study are relevant to modelling random periodicity in the real world.

In literature, there have been a number of recent works such as \cite{chekroun} on random attractors of the stochastic TJ model in climate dynamics; \cite{blw} on stochastic lattice systems; \cite{ch-la-ra-sa} on stochastic resonance; \cite {feng-wu-zhao1} for SDEs with multiplicative linear noise; and \cite{wang} on bifurcations of stochastic reaction diffusion equations. All these results are theoretical on the existence of random periodic paths.

In general, neither stationary solutions nor random periodic solutions can be constructed explicitly, so numerical approximation is another indispensable
tool to study stochastic dynamics, especially to physically relevant problems.
It is worth mentioning here that this is a numerical approximation of an infinite time horizon problem. 
There are numerous work on numerical analysis of SDEs on a finite horizon, and a number of excellent monographs (\cite{5},\cite{milstein}). 
However, there are only a few work on infinite horizon problems. 
A numerical analysis of 
approximation to the stationary solutions and invariant measures of SDEs
through discretising the pull-back,  was given in \cite{mattingly}, \cite{talay}, \cite{talay-tubaro}, \cite{tocino}, \cite{4}. 
Numerical approximations to stable zero solutions of SDEs were given in \cite{hms2003},\cite{5}.

In this paper, we study stochastic differential equations, which possess random periodic solutions and approximate 
them by Euler-Maruyama and Milstein schemes. As far as we know, this is the first paper 
addressing analysis of numerical approximations of random periodic solutions. 
Consider the following m-dimensional SDE 
\begin{eqnarray}\label{feng61}
dX_t^{t_0}=[AX_t^{t_0} + f(t,X_t^{t_0})]dt +g(t,X_t^{t_0})dW_t\label{equations of original period SDE}
\end{eqnarray}
 with $X_{t_0}^{t_0}=\xi $, where $f:\R \times\R^{m}\rightarrow\R^{m},  g:\R \times\R^{m}\rightarrow\R^{m\times d}$, $A$ is a symmetric and negative-definite $m\times m$ matrix, $W_t$ is a two-sided Wiener process in $\R^d$ on a probability space $(\Omega,\mathcal{F},\mathbb{P})$. The filtration is defined as follows 
$\mathcal{F}_s^t=\sigma \{W_u-W_v:s\leq v\leq u\leq t\},\	\mathcal{F}^t=\mathcal{F}_{-\infty}^t=\bigvee_{s\leq t}\mathcal{F}_s^t,$
the random variable $\xi$ is $\mathcal{F}^{t_0}$-measurable. We assume that the functions $f$ and $g$ are $\tau$-periodic in time. 
By the variation of constant formula, the solution of (\ref{equations of original period SDE}) is given
\begin{align}\label{zhao12}
X_t^{t_0}(\xi)=e^{A(t-t_0)}\xi + e^{At}\int_{t_0}^t e^{-As}f(s,X_s^{t_0})ds + e^{At}\int_{t_0}^t e^{-As}g(s,X_s^{t_0})dW_s.
\end{align}	
Denote the standard $P$-preserving ergodic Wiener shift by $\theta:\R\times\Omega\rightarrow\Omega$, 
$\theta_t(\omega)(s):=W(t+s)-W(t),\	t,s\in\R.$
The solution $X$ of the non-autonomous SDE does not satisfy the cocycle property, but $u(t,t_0): \Omega\times R^{m}\to R^m $ given by $u(t,t_0) \xi=X_t^{t_0}(\xi)$ satisfies the semi-flow property (\ref{equality of semi-flow property})
and periodicity (\ref{periodicity of semi-flow property}). 
Denote by $\xtr(\xi,\omega)$ the solution starting from time $-k\tau$.
We will show that when $k\to \infty$, the pull-back $\xtr(\xi)$ has a limit $X^*_r$ in $L^2(\Omega)$ and $X^*_r$ is the random periodic solution of SDE (\ref{equations of original period 
SDE}). It satisfies the infinite horizon stochastic integral equation (IHSIE)
$$X_r^*=\int_{-\infty}^r e^{A(r-s)} f (s, X_s^*)ds+\int _{-\infty}^r e^{A(r-s)}g (s, X_s^*)dW_s.$$
We separate the linear term $AX$ from the nonlinear term in (\ref {feng61}) to enable us to represent the random periodic solution by IHSIE (\cite{11}, \cite{feng-wu-zhao1}).
This is helpful to formulate the scheme for SPDEs for which random periodic solutions were considered in \cite{12}.

Numerical analysis for random periodic solutions was not considered in previous work. 
The infinite horizon stochastic integral equation (IHSIE) method can deal with anticipated cases (\cite {11}-\cite{feng-wu-zhao1}).
But it is still not clear how to numerically approximate two-sided IHSIE and anticipating random periodic solutions.
The pull-back method used in this paper is a popular way to study random attractors. 
Here we use this to deal with stable adapted random periodic solutions of dissipative systems for the first time.  
The pull-back method has some advantages. First, stability can be obtained immediately. Secondly, it can deal with some 
dissipative equations that can not be dealt with by the IHSIE, especially the current IHSIE technique requires equations to have multiplicative linear noise or additive noise and  $f$ being bounded. 
Thirdly in this paper, we study numerical approximations of random periodic solutions
of dissipative SDEs and with the pull-back idea, a random periodic solution of the discretised system can be obtained as well.

 We will first study the Euler-Maruyama numerical scheme in infinite horizon and obtain
an approximating r.p.s. $\widehat X_r^*$. We will prove that 
the latter converges to the exact r.p.s. in $L^2(\Omega)$ at the rate of $\sqrt{\Delta t}$
when the time mesh $\sqrt{\Delta t}$ tends to zero. This result will be numerically verified.
Despite its lower order of the approximation only at the rate of $\sqrt{\Delta t}$, 
the advantage of this scheme is its simplicity and it is relatively easy to implement in actual computations. It works well for the SDE we consider in this paper.

We also consider more advanced numerical schemes, e.g. Milstein scheme (\cite{kloeden1}, \cite{5}, \cite{tocino}), for high order convergence. 
We improve the rate of approximation from $\sqrt{\Delta t}$ in Euler scheme to $\Delta t$.

We will also do some numerical simulations to sample paths of the r.p.s. (Fig. \ref{graph of example 1 samples}). However, simulation of one pathwise trajectory is 
not a reliable way to tell whether or not it is random periodic though it looks very much like to be. Here we provide two reliable methods for this 
from numerical simulations. 
One method is to simulate $\{X^*_t(\omega), t\in {\mathbb R}\}$ and $\{X_t^*(\theta_{-\tau}\omega ), t\in {\mathbb R}\}$ for the same $\omega$. These two trajectories should be repeating each other, but with a shift
of one period of time. See Fig. \ref{graph of example 1 samples} as an example. 
The other way is to simulate  $\{X_t^*(\theta_{-t}\omega), t\in {\mathbb R}\}$, which is periodic if and only if $X_t^*(\omega)$ is random periodic. 
As an example, see Fig. \ref{graph of example 1 periodic}. These two approaches would apply to any other stochastic differential equations
should they have a random periodic solution.

It was known from the recent work \cite {feng-zhao} that the law of the random periodic solution is the periodic measure of the corresponding 
Markov semigroup. Thus we will consider the convergence of
 transition probabilities generated by (\ref{feng61}) and its numerical scheme along the integral multiples of period to the periodic measure and discretised periodic measure respectively and error estimate of the two periodic measures in the weak topology.  

\section{Assumptions and preliminary results}

First we fix some notation. Let $p\geq 1$ and denote the $L^p$-norm of a random variable $\xi$ by $\norm{\xi}_p=\rbrac{\E\abs{\xi}^p}^{1/p}$, and the Frobenius norm of any $d_1\times d_2$ matrix $B$ by $\abs{B}=(\sum_{i=1}^{d_1}\sum_{j=1}^{d_2}B^2_{ij})^{1\over 2}$.

\subsection{Conditions for the SDE}
We assume the following conditions.\\
{\bf Condition (A)}. {\it The eigenvalues of the symmetric matrix $A$, $\lbrace\lambda_j,j=1,2,\ldots,m\rbrace$, satisfy
$0>\lambda_1\geq\lambda_2\geq\ldots\geq\lambda_m.$
}\\
\\
\textbf{Condition (1)}. {\it Assume there exists a constant $\tau>0$ such that for any $t\in \R$, $x\in \R^m$, 
$f(t+\tau,x)=f(t,x),\ 
g(t+\tau,x)=g(t,x).$
and there exist constant $C_0,\beta_1,\beta_2>0$ with $\beta_1+\frac{\beta_2^2}{2}<\abs{\lambda_1}$ such that for any $s,t\in\R$ and $x, y\in\R^m$, 
\begin{eqnarray*}
\abs{f(s,x)-f(t,y)}&\leq C_0\abs{s-t}^{1/2} +\beta_1\abs{x-y},\\
\abs{g(s,x)-g(t,y)}&\leq C_0\abs{s-t}^{1/2} +\beta_2\abs{x-y}.
\end{eqnarray*}
}
\textbf{Condition (2)}. {\it There exists a constant $K^*>0$ such that
$
\norm{\xi}_2\leq K^*.
$
}\\

From Condition (1) it follows that for any $x\in\R^m$, the linear growth condition also holds:
$\abs{f(t,x)}\leq\beta_1\abs{x}+C_1,\ \abs{g(t,x)}\leq\beta_2\abs{x}+C_2,$
where the constants $C_1,C_2>0$ are constants.
It is easy to see that there exists a constant $\alpha$ such that $\beta_1+\frac{\beta_2^2}{2}<\alpha<\abs{\lambda_1}$. In the following, we always assume that $\alpha$ satisfies this condition in all the following proofs. Set $\rho:=\abs{\lambda_m}$. 

 For the SDE case, the quantity $\rho$ is certainly finite and for simplicity, we choose numerical schemes to treat the linear part
 explicitly, which simplify the proof of the pull-back convergence
 to the random periodic solutions for the discretised systems. However, in a case of SPDEs, this technical assumption is no longer true, but can be removed by employing exponential Euler-Maruyama method and Milstein scheme (\cite{baoyuan}, \cite{kloeden2}). This will be studied in future work.

\subsection{Existence and uniqueness of random periodic solution}
%

We first consider the boundedness of the solution in $L^2(\Omega)$.

\begin{lemma}\label{lemma of continuous random periodic solution bounded}
Assume Conditions (A), (1) and (2). Then there exists a constant $C>0$ such that for any $k\in \N$, $r\geq {-k\tau}$, we have $\E\abs{\xtr}^2\leq C.$
\end{lemma}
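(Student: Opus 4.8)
The plan is to bound $\E\abs{\xtr}^2$ uniformly in both $k$ and $r\geq -k\tau$ by working with the variation-of-constants representation. Starting from the solution $X_r^{-k\tau}$ of the SDE on $[-k\tau,r]$, I would write
\begin{align*}
X_r^{-k\tau}=e^{A(r+k\tau)}\xi+\int_{-k\tau}^r e^{A(r-s)}f(s,X_s^{-k\tau})\,ds+\int_{-k\tau}^r e^{A(r-s)}g(s,X_s^{-k\tau})\,dW_s.
\end{align*}
Because $A$ is symmetric and negative-definite with smallest eigenvalue modulus $\abs{\lambda_1}$, Condition (A) gives the decay estimate $\abs{e^{At}x}\leq e^{-\abs{\lambda_1}t}\abs{x}$ for $t\geq 0$, which is the crucial ingredient that makes the infinite-horizon bound possible. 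I would then square, take expectations, and handle the three terms separately.

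The key computation is a Gronwall-type argument on the function $h(r):=\E\abs{\xtr}^2$. For the two integral terms I would apply the Cauchy--Schwarz inequality to the drift term and the It\^o isometry to the diffusion term, using the linear growth bounds $\abs{f(t,x)}\leq\beta_1\abs{x}+C_1$ and $\abs{g(t,x)}\leq\beta_2\abs{x}+C_2$ stated in the excerpt. The drift term, after inserting $e^{A(r-s)}$ and using $\abs{e^{A(r-s)}}\leq e^{-\abs{\lambda_1}(r-s)}$, contributes roughly $\beta_1^2$ times a weighted integral of $h(s)$; the diffusion term contributes roughly $\beta_2^2$ times a weighted integral of $h(s)$ (note the It\^o isometry removes one factor coming from the time integration, so effectively $\beta_2^2/2$ appears after balancing the exponential weights). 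The decisive point is that the hypothesis $\beta_1+\frac{\beta_2^2}{2}<\abs{\lambda_1}$ (equivalently the existence of $\alpha$ with $\beta_1+\frac{\beta_2^2}{2}<\alpha<\abs{\lambda_1}$) guarantees that the exponential decay of the semigroup strictly dominates the growth coming from the nonlinearities, so the relevant weighted integrals converge and produce a constant independent of $k$ and $r$.

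Concretely, I expect to arrive at an inequality of the form
\begin{align*}
h(r)\leq C e^{-2\abs{\lambda_1}(r+k\tau)}\norm{\xi}_2^2+C\int_{-k\tau}^r e^{-2\alpha(r-s)}\big(1+h(s)\big)\,ds
\end{align*}
after absorbing cross terms via Young's inequality and choosing the splitting exponent to match $\alpha$. Applying the integral form of Gronwall's lemma (or directly multiplying by $e^{2\alpha r}$ and differentiating) yields a bound $h(r)\leq C$, where $C$ depends only on $\alpha$, $\abs{\lambda_1}$, $K^*$, $C_1$, $C_2$ and $\tau$, but crucially not on $k$ or $r$. The factor $e^{-2\abs{\lambda_1}(r+k\tau)}\norm{\xi}_2^2$ is bounded using Condition (2), and since the exponent is nonpositive this initial-data contribution stays controlled uniformly.

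The main obstacle, and the place requiring genuine care, is getting the exponents to line up so that the final constant is genuinely independent of $k$: one must split the exponential weight $e^{-2\abs{\lambda_1}(r-s)}$ into a piece $e^{-2\alpha(r-s)}$ that is integrated against $(1+h(s))$ and a residual decay $e^{-2(\abs{\lambda_1}-\alpha)(r-s)}$ that is used (via Young's inequality) to absorb the cross terms and the constants, and then verify that the strict inequality $\beta_1+\frac{\beta_2^2}{2}<\alpha$ makes the resulting self-referential integral inequality for $h$ close with a uniform bound. Because $r+k\tau$ can be arbitrarily large as $k\to\infty$, any estimate whose constant grows with the length of the integration window would be useless here; the whole point is that the dissipativity encoded in Condition (A) together with the smallness condition in Condition (1) defeats this growth, and I would double-check the Gronwall step to confirm the constant does not silently depend on $k\tau$.
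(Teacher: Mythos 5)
You have a genuine gap, and it sits exactly where you said you would ``double-check'': the Gronwall step does not close under the paper's hypothesis if you start from the variation-of-constants formula and estimate second moments. Write $X_r^{-k\tau}=T_1+T_2+T_3$ with $T_1=e^{A(r+k\tau)}\xi$, $T_2=\int_{-k\tau}^r e^{A(r-s)}f\,ds$, $T_3=\int_{-k\tau}^r e^{A(r-s)}g\,dW_s$, and set $\lambda=\abs{\lambda_1}$, $h(r)=\E\abs{\xtr}^2$. After weighted Cauchy--Schwarz the drift contributes a coefficient of order $\beta_1^2/\lambda^2$ against the weighted integral of $h$, and It\^o's isometry gives $\beta_2^2/(2\lambda)$ for the diffusion; but the cross term $2\,\E[T_2^TT_3]$ cannot be discarded, and Young's inequality forces a factor $(1+\epsilon)$ on one piece and $(1+1/\epsilon)$ on the other. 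The sharpest version of your route is the Minkowski bound $\sqrt{h(r)}\leq \norm{T_1}_2+\norm{T_2}_2+\norm{T_3}_2$, whose self-consistency condition is $\frac{\beta_1}{\lambda}+\frac{\beta_2}{\sqrt{2\lambda}}<1$ --- strictly stronger than the paper's hypothesis $\frac{\beta_1}{\lambda}+\frac{\beta_2^2}{2\lambda}<1$ whenever $\beta_1>0$, because $\sqrt{x}>x$ on $(0,1)$. Concretely, take $\lambda=1$, $\beta_1=\tfrac12$, $\beta_2^2/2=0.49$: Condition (1) holds since $0.99<1$, yet $\frac{\beta_1}{\lambda}+\frac{\beta_2}{\sqrt{2\lambda}}=0.5+0.7=1.2>1$, and the Young-optimized squared version gives $(1+\epsilon)(0.49)+(1+\epsilon^{-1})(0.25)\geq 0.74+2\sqrt{0.49\cdot 0.25}=1.44>1$ for every $\epsilon>0$. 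For such admissible parameters the constant $C$ in your claimed inequality $h(r)\leq Ce^{-2\lambda(r+k\tau)}\norm{\xi}_2^2+C\int_{-k\tau}^re^{-2\alpha(r-s)}(1+h(s))\,ds$ exceeds $2\alpha$, and Gronwall then yields a factor growing like $e^{(C-2\alpha)(r+k\tau)}$ --- precisely the $k$-dependence that destroys the pull-back argument. So your assertion that $\beta_1+\frac{\beta_2^2}{2}<\alpha$ ``makes the self-referential inequality close'' is false for this method: the mild-form approach loses the sharp constant because squaring turns the drift's contribution from linear in $\beta_1$ into quadratic.

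The paper's proof avoids this by never squaring $f$: it applies It\^o's formula to $e^{2\alpha r}\abs{\xtr}^2$, so the drift enters through the \emph{inner product} $2\rbrac{\xts}^Tf(s,\xts)\leq 2\beta_1\abs{\xts}^2+2(C_1+\beta_2C_2)\abs{\xts}$ (coefficient $2\beta_1$, linear), the dissipativity enters through the algebraic fact that $\alpha I+A$ is non-positive-definite, which makes the terms $2\alpha\abs{\xts}^2+2\rbrac{\xts}^TA\xts$ drop out, and the It\^o correction contributes $\abs{g}^2\leq\beta_2^2\abs{\xts}^2+\dots$. After one application of Young's inequality to the terms linear in $\abs{\xts}$, the total coefficient in the Gronwall inequality is $K_3=(2\beta_1+\beta_2^2)(1+\varepsilon)<2\alpha$, which is \emph{exactly} what Condition (1) provides, and the resulting bound $\E\abs{\xtr}^2\leq\norm{\xi}_2^2+\frac{2\alpha K_2}{2\alpha-K_3}$ is uniform in $k$ and $r$. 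To repair your proof, keep your overall Gronwall skeleton but replace the mild-solution estimate by this weighted energy (It\^o) estimate; your argument as written would only prove the lemma under the strictly stronger smallness condition above.
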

\begin{proof}
First, using It\^o's formula to $e^{2\alpha r}\abs{X_r^{-k\tau}}^2$, we have
\begin{eqnarray}
e^{2\alpha r}\abs{\xtr}^2=&e^{-2\alpha k \tau}\abs{\xi}^2+2\alpha\int_{-k\tau}^re^{2\alpha s}\abs{\xts}^2ds  +  2\int_{-k\tau}^re^{2\alpha s}\rbrac{\xts}^TA\xts ds\notag\\
&+2\int_{-k\tau}^re^{2\alpha s}\rbrac{\xts}^T  f(s,\xts)ds+\int_{-k\tau}^re^{2\alpha s}\abs{g(s,\xts)}^2ds\notag\\
&+2\int_{-k\tau}^re^{2\alpha s}\rbrac{\xts}^T  g(s,\xts)dW_{s}\label{equation of continuous expansion of ito's formula}
\end{eqnarray}
Firstly note the sum of the second and third terms of the right-hand side is non-positive
as the matrix $\alpha I + A$ is non-positive-definite.
Take the expectation of both sides of (\ref{equation of continuous expansion of ito's formula}), apply the above inequality and use linear growth conditions to obtain
\begin{eqnarray}
 \label{inequality of continuous mild solution in lemma}
e^{2\alpha r}\E\abs{\xtr}^2
&\leq& e^{-2\alpha k \tau}\norm{\xi}_2^2+(2\beta_1+\beta_2^2)\int_{-k\tau}^r e^{2\alpha s}\E\abs{\xts}^2ds\notag\\
&&+2(C_1+\beta_2C_2)\int_{-k\tau}^r e^{2\alpha s}\E\abs{\xts}ds+(2\alpha)^{-1}C_2^2\rbrac{e^{2\alpha r}-e^{-2\alpha k\tau}}.\notag
\end{eqnarray}Also, there exits $\varepsilon>0$, such that
$\rbrac{\beta_1+\frac{\beta_2^2}{2}}(1+\varepsilon )<\alpha<\abs{\lambda_1}.$
By Young's inequality
\begin{align*}
2(C_1+\beta_2C_2)\abs{\xts}\leq\frac{(C_1+\beta_2C_2)^2}{\varepsilon (2\beta_1+\beta_2^2)}+\varepsilon (2\beta_1+\beta_2^2)\abs{\xts}^2.
\end{align*}
Then we have
\begin{align*}
e^{2\alpha r}\E\abs{\xtr}^2\leq &K_1+K_2e^{2\alpha r}+K_3\int_{-k\tau}^r e^{2\alpha s}\norm{\xts}_2^2ds,
\end{align*}
where
\begin{align*}
K_1=&e^{-2\alpha k \tau} \norm{\xi}_2^2-\rbrac{\frac{C_2^2}{2\alpha}+\frac{(C_1+\beta_2C_2)^2}{2\alpha\varepsilon (2\beta_1+\beta_2^2)}}e^{-2\alpha k\tau},\\
K_2=&\frac{C_2^2}{2\alpha}+\frac{(C_1+\beta_2C_2)^2}{2\alpha\varepsilon (2\beta_1+\beta_2^2)},\ 
K_3=(2\beta_1+\beta_2^2)(1+\varepsilon )<2\alpha.
\end{align*}
Now applying Gronwall's inequality, we have
\begin{align*}
e^{2\alpha r}\E\abs{\xtr}^2
\leq& K_1+K_2e^{2\alpha r}+\int_{-k\tau}^r\rbrac{ K_1+K_2e^{2\alpha s}}K_3 e^{\int_s^rK_3dr}ds\\
\leq&(K_1e^{2\alpha k\tau}+K_2)e^{2\alpha r}+\frac{K_2K_3}{2\alpha-K_3}e^{2\alpha r}.
\end{align*}
Here we notice that
$K_1e^{2\alpha k\tau}+K_2=\norm{\xi}_2^2.$
Therefore, by Condition (2)
\begin{align*}
\E\abs{\xtr}^2\leq&  \norm{\xi}_2^2+\frac{2\alpha K_2}{2\alpha-K_3}\leq K^*+\frac{2\alpha K_2}{2\alpha - K_3},
\end{align*}
\vskip-0.9cm \hfill \end{proof}\vskip15pt

In the next lemma, we will also obtain a bound on the norm $\norm{X_{t_1}^{-k\tau}-X_{t_2}^{-k\tau}}_2$ for any fixed time $t_1,t_2$. This will be essential for us to estimate the error of the numerical approximation in Section \ref{zhao2}.
\begin{lemma}\label{lemma of order of dt between different times}
Assume Conditions (A), (1) and (2). Then there exist constants $C_3>0$, $C_4>0$, such that for any positive $k\in \N$ and any $t_1,t_2\geq0,t_1\geq t_2$, the solution of (\ref{equations of original period SDE}) satisfies
$\norm{X_{t_1}^{-k\tau}-X_{t_2}^{-k\tau}}_2\leq C_3(t_1-t_2)+C_4\sqrt{t_1-t_2}.$
\end{lemma}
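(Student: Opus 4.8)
The plan is to start from the integral representation of the solution and split the difference into the contributions from the linear semigroup, the drift integral, and the stochastic integral, controlling each piece separately using the bound $\E\abs{X_s^{-k\tau}}^2\leq C$ already established in Lemma \ref{lemma of continuous random periodic solution bounded}. Writing $X_{t_1}^{-k\tau}-X_{t_2}^{-k\tau}$ from the variation-of-constants formula (\ref{zhao12}) with $t_0=-k\tau$, I would separate it as
\begin{align*}
X_{t_1}^{-k\tau}-X_{t_2}^{-k\tau}=&\rbrac{e^{A(t_1-t_2)}-I}X_{t_2}^{-k\tau}+\int_{t_2}^{t_1}e^{A(t_1-s)}f(s,X_s^{-k\tau})ds\\
&+\int_{t_2}^{t_1}e^{A(t_1-s)}g(s,X_s^{-k\tau})dW_s,
\end{align*}
using the semiflow/semigroup structure so that the lower-limit integrals telescope and only the increment over $[t_2,t_1]$ survives (the $t_2$-integrals combine into the $e^{A(t_1-t_2)}$ factor multiplying $X_{t_2}^{-k\tau}$). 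First I would take the $L^2(\Omega)$ norm and apply the triangle inequality to reduce to bounding the three terms.

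For the first term, since $A$ is symmetric negative-definite with spectrum bounded above by $\lambda_1<0$, the operator norm of $e^{A(t_1-t_2)}-I$ is at most $\rho(t_1-t_2)$ (by the mean value theorem applied eigenvalue-by-eigenvalue, as $\norm{e^{As}-I}\leq \norm{A}\,s$ and $\norm{A}=\rho$); combined with $\norm{X_{t_2}^{-k\tau}}_2\leq\sqrt C$ this yields a bound of order $(t_1-t_2)$. For the drift term I would bound $\norm{e^{A(t_1-s)}}\leq1$, use the linear growth condition $\abs{f(s,x)}\leq\beta_1\abs{x}+C_1$ and Lemma \ref{lemma of continuous random periodic solution bounded} to get an integrand whose $L^2$ norm is uniformly bounded, so Minkowski's integral inequality gives a contribution of order $(t_1-t_2)$. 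For the stochastic term I would apply the It\^o isometry (which removes the dependence on the square root structure of Brownian increments and turns the increment into a time integral), again bound $\norm{e^{A(t_1-s)}}\leq1$, and use the linear growth of $g$ together with the uniform second-moment bound to obtain
$$\E\abs{\int_{t_2}^{t_1}e^{A(t_1-s)}g(s,X_s^{-k\tau})dW_s}^2\leq\int_{t_2}^{t_1}\E\abs{g(s,X_s^{-k\tau})}^2ds\leq \text{const}\cdot(t_1-t_2),$$
so this term is of order $\sqrt{t_1-t_2}$. Collecting the three estimates gives the claimed bound with $C_3$ absorbing the two $O(t_1-t_2)$ contributions and $C_4$ the stochastic $O(\sqrt{t_1-t_2})$ one.

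The main point requiring care — rather than a genuine obstacle — is ensuring that all constants are uniform in $k$ and in the absolute times $t_1,t_2\geq0$, not merely in the increment $t_1-t_2$. This is exactly why Lemma \ref{lemma of continuous random periodic solution bounded} was stated with a constant $C$ independent of $k$ and $r$: as long as I only ever invoke $\E\abs{X_s^{-k\tau}}^2\leq C$, every estimate above inherits that uniformity, and the contractivity $\norm{e^{As}}\leq1$ for $s\geq0$ (from negative-definiteness of $A$) prevents any growth in the absolute time. The only mild subtlety is handling the cross terms when squaring the sum of three pieces before taking expectations; I would avoid this by applying the triangle inequality in $L^2$ at the very start, so that each term is estimated in isolation and no cross terms appear.
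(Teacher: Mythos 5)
Your proof is correct, and it takes a genuinely different route from the paper's. The paper keeps both solutions anchored at the starting time $-k\tau$ via (\ref{zhao12}) and splits each of the drift and stochastic integrals into a long-history part over $[-k\tau,t_2]$, whose kernel is the difference $\rbrac{e^{At_1}-e^{At_2}}e^{-As}$, plus a short part over $[t_2,t_1]$; it then needs the matrix estimate $\abs{e^{At_1}-e^{At_2}}\leq\abs{A}(t_1-t_2)$ together with the integrated decay bound $\int_{-k\tau}^{t_2}\abs{\rbrac{e^{At_1}-e^{At_2}}e^{-As}}^2ds\leq \frac{Tr(-A)}{2}(t_1-t_2)^2$, which exploits the exponential damping from Condition (A) to make the infinite past harmless. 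You instead re-anchor at $t_2$ using the flow property, so the entire history collapses into the single term $\rbrac{e^{A(t_1-t_2)}-I}X_{t_2}^{-k\tau}$, controlled by $\abs{e^{As}-I}\leq\abs{A}s$ and the uniform moment bound of Lemma \ref{lemma of continuous random periodic solution bounded}, and only increments over $[t_2,t_1]$ survive in the drift and It\^o integrals. Your version is more elementary: it needs only contractivity $\norm{e^{As}}\leq 1$ and avoids integrating a kernel difference over the unbounded interval, while the paper's version stays within the global mild formulation (\ref{zhao12}) that it reuses throughout Section \ref{zhao2}. Both yield the stated bound $C_3(t_1-t_2)+C_4\sqrt{t_1-t_2}$ with constants uniform in $k$ and in the absolute times, for exactly the reason you identify: every estimate passes through the $k$- and $r$-uniform bound $\E\abs{X_s^{-k\tau}}^2\leq C$. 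One cosmetic point: the paper's $\abs{\cdot}$ on matrices is the Frobenius norm, for which $\rho=\abs{\lambda_m}$ is the operator norm rather than $\abs{A}$; either constant is fine here, but if you cite $\norm{e^{As}-I}\leq\rho s$ you should say you are using the spectral (operator) norm, which is legitimate since $A$ is symmetric.
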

\begin{proof}
From (\ref{zhao12}), we see that
\begin{eqnarray}
\norm{X_{t_1}^{-k\tau}-X_{t_2}^{-k\tau}}_2
&&\leq e^{2A k \tau}\norm{\xi}_2 \abs{e^{At_1}-e^{At_2}}\notag\\
&&+\norm{e^{At_1}\int_{-k\tau}^{t_1}e^{-As}  f(s,\xts)ds-e^{At_2}\int_{-k\tau}^{t_2}e^{-As}f(s,\xts)ds}_2\notag\\
&&+\norm{e^{At_1}\int_{-k\tau}^{t_1}e^{-As}  g(s,X_{s}^{-k\tau})dW_{s}-e^{A t_2}\int_{-k\tau}^{t_2}e^{-As}g(s,X_{s}^{-k\tau})dW_{s}}_2.\label{inequality of estimation of continuous error between different times}
\end{eqnarray}
We evaluate each term on the right-hand side of (\ref{inequality of estimation of continuous error between different times}). First we consider the first term. By Lemma 1 in \cite {4},
$\abs{e^{At_1}-e^{At_2}}\leq
\abs{A}\rbrac{t_1-t_2}$.
Now we estimate the third term with the Minkowski inequality, It\^o's isometry and the linear growth property 
\begin{align*}
&\norm{e^{At_1}\int_{-k\tau}^{t_1}e^{-As}g(s,X_{s}^{-k\tau})dW_{s}-e^{At_2}\int_{-k\tau}^{t_2}e^{-As}g(s,X_{s}^{-k\tau})dW_{s}}_2\\
\leq&\norm{\int_{-k\tau}^{t_2}\rbrac{e^{At_1}-e^{At_2}} e^{-As}g(s,X_{s}^{-k\tau})dW_{s}}_2+\norm{\int_{t_2}^{t_1}e^{-A(s-t_1)}g(s,X_{s}^{-k\tau})dW_{s}}_2\\
\leq&\sqrt{\int_{-k\tau}^{t_2}\abs{\rbrac{e^{At_1}-e^{At_2}}e^{-As}}^2\E\sbrac{\beta_2\rbrac{\abs{X_{s}^{-k\tau}}}+C_2}^2ds}\\
&+\sqrt{\int_{t_2}^{t_1}\abs{e^{-A(s-t_1)}}^2\E\sbrac{\beta_2\rbrac{\abs{X_{s}^{-k\tau}}}+C_2}^2ds}\\
\leq&\sqrt{\int_{-k\tau}^{t_2}\abs{\rbrac{e^{At_1}-e^{At_2}}e^{-As}}^2\rbrac{2\beta_2^2\E\abs{X_{s}^{-k\tau}}^2+2C_2^2}ds}\\
&+\sqrt{\int_{t_2}^{t_1}\abs{e^{-A(s-t_1)}}^2\rbrac{2\beta_2^2\E\abs{X_{s}^{-k\tau}}^2+2C_2^2}ds}\\
\leq&K_4\sqrt{\int_{-k\tau}^{t_2}\abs{\rbrac{e^{At_1}-e^{At_2}}e^{-As}}^2ds}+K_4\sqrt{\int_{t_2}^{t_1}\abs{e^{-A(s-t_1)}}^2ds}.\\
\leq & K_4\sqrt {{Tr(-A)\over 2}}(t_1-t_2)+K_4 \sqrt {t_1-t_2}.
\end{align*}
Here we take some constant $K_4$ because $\E\abs{\xts}^2$ is bounded above according to Lemma \ref{lemma of continuous random periodic solution bounded}. 
Lastly, we consider the second term of (\ref{inequality of estimation of continuous error between different times}) with Minkowski inequality 
\begin{align*}
&\norm{e^{At_1}\int_{-k\tau}^{t_1}e^{-As}f(s,\xts)ds-e^{At_2}\int_{-k\tau}^{t_2}e^{-As}f(s,\xts)ds}_2\\
\leq&\norm{\int_{-k\tau}^{t_2}(e^{At_1}-e^{At_2})e^{-As}f(s,\xts)ds}_2+\norm{\int_{t_2}^{t_1}e^{-A(s-t_1)}f(s,\xts)ds}_2\\
\leq&\int_{-k\tau}^{t_2}\norm{(e^{At_1}-e^{At_2})e^{-As}f(s,\xts)}_2ds+\int_{t_2}^{t_1}\norm{e^{-A(s-t_1)}f(s,\xts)}_2ds\\
\leq&\int_{-k\tau}^{t_2}\abs{(e^{At_1}-e^{At_2})e^{-As}}\norm{f(s,\xts)}_2ds+\int_{t_2}^{t_1}\abs{e^{-A(s-t_1)}}\norm{f(s,\xts)}_2ds\\
\leq&K_5\rbrac{\int_{-k\tau}^{t_2}\abs{\rbrac{e^{At_1}-e^{At_2}}  e^{-As}}ds+\int_{t_2}^{t_1}\abs{e^{-A(s-t_1)}}ds}\\
\leq& 2K_5(t_1-t_2),
\end{align*}
for a constant $K_5>0$.
Combining the above estimates we obtain the lemma
with the constants $C_3,C_4$ being independent of $k$ and $t_1,t_2$.
\end{proof}

Now we continue to consider the difference of the solutions under various initial values. For simplicity, we here study two different initial values $\xi$ and $\eta$.
\begin{lemma}\label{lemma of continuous random periodic solution wrt different initial values}
Denote by $\xtr$ and $Y_r^{-k\tau}$ two solutions of (\ref{equations of original period SDE}) with different initial values $\xi$ and $\eta$ respectively. Assume Conditions (A), (1) and Condition (2) for both initial values. Then
$\norm{\xtr-Y_r^{-k\tau}}_2
\leq e^{\rbrac{\beta_1+{\beta_2^2\over 2}-\alpha} \rbrac {r+k\tau}} \norm{\xi-\eta}_2.$
\end{lemma}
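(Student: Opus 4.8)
The plan is to adapt the It\^o-formula-plus-Gronwall argument already used in Lemma \ref{lemma of continuous random periodic solution bounded}, applied now to the difference of the two solutions. Write $D_r := \xtr - Y_r^{-k\tau}$. Since both processes solve (\ref{equations of original period SDE}) driven by the same Wiener process, $D$ satisfies the linear-type SDE
\begin{align*}
dD_r = \sbrac{A D_r + f(r,\xtr) - f(r,Y_r^{-k\tau})}dr + \sbrac{g(r,\xtr) - g(r,Y_r^{-k\tau})}dW_r,
\end{align*}
with $D_{-k\tau} = \xi - \eta$. The key simplification is that the time-difference contributions in Condition (1) do not appear here: $f$ and $g$ are evaluated at the \emph{same} time $r$ in both terms, so only the spatial Lipschitz constants $\beta_1,\beta_2$ survive.

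Next I would apply It\^o's formula to $e^{2\alpha r}\abs{D_r}^2$, exactly as in (\ref{equation of continuous expansion of ito's formula}). Taking expectations, so that the stochastic integral against $dW_s$ drops out, yields
\begin{align*}
e^{2\alpha r}\E\abs{D_r}^2 = e^{-2\alpha k\tau}\norm{\xi-\eta}_2^2 &+ \E\int_{-k\tau}^r 2e^{2\alpha s} D_s^T(\alpha I + A)D_s\,ds \\ &+ \E\int_{-k\tau}^r 2e^{2\alpha s} D_s^T\sbrac{f(s,\xts)-f(s,Y_s^{-k\tau})}ds \\ &+ \E\int_{-k\tau}^r e^{2\alpha s}\abs{g(s,\xts)-g(s,Y_s^{-k\tau})}^2 ds.
\end{align*}
Justifying that the $dW_s$-integral is a genuine martingale of zero mean uses the $L^2$-bound of Lemma \ref{lemma of continuous random periodic solution bounded} for both $\xtr$ and $Y_r^{-k\tau}$ together with the linear growth of $g$.

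The structural observation reused from Lemma \ref{lemma of continuous random periodic solution bounded} is that $\alpha I + A$ is negative semi-definite because $\alpha < \abs{\lambda_1}$; hence the first integral is $\leq 0$ and is discarded. For the remaining two integrals I invoke Condition (1) at equal times together with Cauchy--Schwarz, giving $2D_s^T\sbrac{f(s,\xts)-f(s,Y_s^{-k\tau})} \leq 2\beta_1\abs{D_s}^2$ and $\abs{g(s,\xts)-g(s,Y_s^{-k\tau})}^2 \leq \beta_2^2\abs{D_s}^2$. Setting $\phi(r):=e^{2\alpha r}\E\abs{D_r}^2$ then produces the closed integral inequality
\begin{align*}
\phi(r) \leq e^{-2\alpha k\tau}\norm{\xi-\eta}_2^2 + (2\beta_1+\beta_2^2)\int_{-k\tau}^r \phi(s)\,ds.
\end{align*}

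Finally I would apply Gronwall's inequality to obtain $\phi(r) \leq e^{-2\alpha k\tau}\norm{\xi-\eta}_2^2\, e^{(2\beta_1+\beta_2^2)(r+k\tau)}$, divide out the factor $e^{2\alpha r}$, and take square roots; the exponent collapses to $(\beta_1 + \frac{\beta_2^2}{2} - \alpha)(r+k\tau)$, which is the claimed bound. I do not anticipate a serious obstacle: the only point needing care is the bookkeeping of constants, so that the dissipation from $\alpha I + A$ and the choice $\beta_1 + \frac{\beta_2^2}{2} < \alpha$ combine into a \emph{negative} net exponent (making this estimate a genuine contraction as $k\to\infty$, which is what will later drive the pull-back convergence), together with the routine verification of enough integrability for the stochastic integral to have zero expectation.
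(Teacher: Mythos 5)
Your proposal is correct and follows essentially the same route as the paper: the paper likewise applies the It\^o-formula-plus-Gronwall method of Lemma \ref{lemma of continuous random periodic solution bounded} to $e^{2\alpha r}\abs{\zeta_r^{-k\tau}}^2$ with $\zeta_r^{-k\tau}=\xtr-Y_r^{-k\tau}$, discards the non-positive $\alpha I + A$ contribution, bounds the drift and diffusion differences by the spatial Lipschitz constants to get the coefficient $2\beta_1+\beta_2^2$, and concludes by Gronwall. Your added remarks --- that the time-H\"older terms in Condition (1) drop out at equal times, and that the zero mean of the stochastic integral is justified by the $L^2$-bound and linear growth --- are details the paper leaves implicit, but they introduce no deviation in method.
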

\begin{proof}
According to (\ref{zhao12}) we have 
\begin{align*}
\xtr-Y_r^{-k\tau}=&e^{A(r+k\tau)}\rbrac{\xi-\eta}+e^{Ar}\int_{-k\tau}^r e^{-As}\rbrac{f(s,\xts)-f(s,Y_s^{-k\tau})}ds\\
&+e^{Ar}\int_{-k\tau}^{r} e^{-As}\rbrac{g(s,X_{s}^{-k\tau})-g(s,Y_{s}^{-k\tau})}dW_s.
\end{align*}
For simplicity, denote $\zeta_r^{-k\tau}=X_r^{-k\tau}-Y_r^{-k\tau}$. Then according to the method used in Lemma \ref{lemma of continuous random periodic solution bounded}, and the global Lipschitz condition, we have
\begin{align*}
e^{2\alpha r}\norm{\zeta_r^{-k\tau}}_2^2
\leq& e^{-2\alpha k \tau} \norm{\xi-\eta}_2^2+2\int_{-k\tau}^r e^{2\alpha s}\E\Big[(\zeta_s^{-k\tau})^T(f(s,\xts)\\
&-f(s,Y_s^{-k\tau}))\Big]ds
+\int_{-k\tau}^r e^{2\alpha s}\E\abs{g(s,\xts)-g(s,Y_s^{-k\tau})}^2ds.\\
\leq& e^{-2\alpha k \tau} \norm{\xi-\eta}_2^2+\rbrac{2\beta_1+\beta_2^2}\int_{-k\tau}^r  e^{2\alpha s}\norm{\zeta_s^{-k\tau}}_2^2ds.
\end{align*}
Then the result follows from the Gronwall inequality.
\end{proof}

Now we can prove the following theorem. 

\begin{theorem}\label{theorem of continuous random periodic solution convergence}
Assume Conditions (A), (1). Then there exists a unique random periodic solution $X^*(r,\cdot)\in L^2(\Omega),r\geq 0$ such that for any initial value $\xi$ satisfying Condition (2), the solution of (\ref{equations of original period SDE}) satisfies
$\lim_{k\rightarrow\infty}\norm{X_r^{-k\tau}(\xi)-X^*(r)}_2=0.$
\end{theorem}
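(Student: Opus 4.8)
The plan is to obtain $X^*(r)$ as the limit in $L^2(\Omega)$ of the pull-back sequence $\{\xtr(\xi)\}_{k\in\N}$, and then to verify the two defining properties of a random periodic solution together with uniqueness. The essential step is a Cauchy estimate, and the key observation is that, by the semi-flow property (\ref{zhao3}), a pull-back started further in the past coincides with a pull-back started at $-k\tau$ from a different but uniformly bounded initial datum. Indeed, for $k,l\in\N$,
\begin{align*}
X_r^{-(k+l)\tau}(\xi)=u(r,-k\tau)\,X_{-k\tau}^{-(k+l)\tau}(\xi),
\end{align*}
so $X_r^{-(k+l)\tau}(\xi)$ and $\xtr(\xi)$ are two solutions on $[-k\tau,r]$ issued at time $-k\tau$ from $\eta:=X_{-k\tau}^{-(k+l)\tau}(\xi)$ and from $\xi$ respectively.

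Both data lie in a fixed $L^2$-ball, since $\norm{\xi}_2\le K^*$ by Condition (2) and $\norm{\eta}_2\le\sqrt{C}$ by Lemma \ref{lemma of continuous random periodic solution bounded}; hence $\norm{\xi-\eta}_2\le K^*+\sqrt{C}=:M$ uniformly in $k,l$. Applying Lemma \ref{lemma of continuous random periodic solution wrt different initial values} gives
\begin{align*}
\norm{X_r^{-(k+l)\tau}(\xi)-\xtr(\xi)}_2\le e^{\rbrac{\beta_1+\frac{\beta_2^2}{2}-\alpha}(r+k\tau)}M .
\end{align*}
As $\alpha>\beta_1+\frac{\beta_2^2}{2}$ the exponent is negative, so the right-hand side tends to $0$ as $k\to\infty$, uniformly in $l$. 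Thus $\{\xtr(\xi)\}_k$ is Cauchy in the complete space $L^2(\Omega)$ and converges to some $X^*(r)$; the same lemma applied to two initial data both satisfying Condition (2) shows that $X^*(r)$ does not depend on $\xi$.

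To identify $X^*$ as a random periodic solution I would pass to the limit in the two structural identities. The argument of Lemma \ref{lemma of continuous random periodic solution wrt different initial values} (carried out from a general starting time) shows that $u(t,s)$ is a contraction on $L^2(\Omega)$ with factor $e^{(\beta_1+\beta_2^2/2-\alpha)(t-s)}\le 1$; letting $k\to\infty$ in $u(t,s)X_s^{-k\tau}(\xi)=X_t^{-k\tau}(\xi)$, which holds by (\ref{zhao3}), then yields the invariance $u(t,s)X^*(s)=X^*(t)$. For the periodicity, the periodic property (\ref{zhao4}) gives $X_{r+\tau}^{-(k-1)\tau}(\xi,\omega)=\xtr(\xi,\theta_\tau\omega)$; since $\theta_\tau$ preserves $\mathbb{P}$, $L^2$-convergence is inherited under $\omega\mapsto\theta_\tau\omega$, and letting $k\to\infty$ on both sides gives $X^*(r+\tau,\omega)=X^*(r,\theta_\tau\omega)$.

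For uniqueness, let $X^*,Y^*$ be two random periodic solutions bounded in $L^2(\Omega)$. By the random periodic property and measure-preservation of $\theta$, $\norm{X^*(-k\tau)}_2=\norm{X^*(0)}_2$ (and similarly for $Y^*$), so $\norm{X^*(-k\tau)-Y^*(-k\tau)}_2$ is bounded uniformly in $k$; writing $X^*(r)=u(r,-k\tau)X^*(-k\tau)$, $Y^*(r)=u(r,-k\tau)Y^*(-k\tau)$ and invoking Lemma \ref{lemma of continuous random periodic solution wrt different initial values} forces $\norm{X^*(r)-Y^*(r)}_2\to 0$ as $k\to\infty$, whence $X^*=Y^*$. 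I expect the Cauchy step to be the main obstacle: the decisive idea is to read, via the semi-flow relation, the comparison of pull-backs with different starting times as a comparison of solutions with different initial data at the common time $-k\tau$, thereby reducing everything to the contraction estimate of Lemma \ref{lemma of continuous random periodic solution wrt different initial values}, with the uniform bound of Lemma \ref{lemma of continuous random periodic solution bounded} ensuring the initial-value gap stays bounded while the negative exponent drives it to zero.
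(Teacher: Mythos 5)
Your proposal is correct and follows essentially the same route as the paper: the Cauchy property of the pull-back via the semi-flow identity $X_r^{-(k+l)\tau}(\xi)=u(r,-k\tau)X_{-k\tau}^{-(k+l)\tau}(\xi)$, the uniform $L^2$ bound of Lemma \ref{lemma of continuous random periodic solution bounded} on the shifted initial datum, and the contraction estimate of Lemma \ref{lemma of continuous random periodic solution wrt different initial values} with negative exponent are exactly the paper's mechanism, as is the initial-value independence. Two of your steps differ in execution, both defensibly. First, where the paper invokes Kunita's continuity of the flow $u(t,r)$ in $L^2$ with respect to the initial value to pass to the limit in $u(t,r)X_r^{-k\tau}(\xi)=X_t^{-k\tau}(\xi)$, you instead rerun the Gronwall argument of Lemma \ref{lemma of continuous random periodic solution wrt different initial values} from a general starting time to get that $u(t,s)$ is an $L^2$-contraction; this is self-contained and in fact sharper than what is needed. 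Second, for the periodicity you invoke the shift identity (\ref{zhao4}) together with measure-preservation of $\theta_\tau$, whereas the paper re-derives that identity inside the proof via pathwise uniqueness of solutions (equation (\ref{equality in theorem 2.6})); these are the same fact, and your observation that $L^2$-convergence is preserved under $\omega\mapsto\theta_\tau\omega$ is the correct justification for passing to the limit on both sides. Finally, you supply an explicit uniqueness argument (comparing two $L^2$-bounded random periodic solutions started at $-k\tau$ and letting $k\to\infty$) which the paper omits, proving only independence of the pull-back limit from the initial value; your argument is a genuine improvement, with the one caveat that Lemma \ref{lemma of continuous random periodic solution wrt different initial values} rests on It\^o's formula and the It\^o isometry, so it applies only when $X^*(-k\tau)$ and $Y^*(-k\tau)$ are $\mathcal{F}^{-k\tau}$-measurable; hence your uniqueness holds within the class of adapted $L^2$ random periodic solutions (to which the constructed limit belongs, being an $L^2$-limit of $\mathcal{F}^r$-measurable variables), not among arbitrary, possibly anticipating, ones.
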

\begin{proof}
Condition (2) implies that the initial value $\xi$ belongs to $L^2(\Omega)$. According to Lemma \ref{lemma of continuous random periodic solution bounded}, $X_r^{-k\tau}(\cdot)$ maps $L^2(\Omega)$ to itself. Now we use the semi-flow property to get that for any $r,k,p\geq 0$,
$X_{r}^{-k\tau-p\tau}(\xi)=X_{r}^{-k\tau}(\omega)\circ X_{-k\tau}^{-(k+p)\tau}(\omega,\xi).$
Thus we can apply Lemma \ref{lemma of continuous random periodic solution wrt different initial values} to have for any $\varepsilon  >0$ there exists $k^*>0$ such that for any $k\geq k^*$,
$\norm{X_r^{-k\tau}(\xi)-X_r^{-(k+p)\tau}(\xi)}_2<\varepsilon .$
This means that there exists $N>0$ such that for any $l,m\geq N$, we have
$\norm{X_r^{-l\tau}(\xi)-X_r^{-m\tau}(\xi)}_2<\varepsilon ,$
i.e.$\{X_r^{-k\tau}(\xi)\}_{k\in\N}$ is a Cauchy sequence,so converges to some $X^*(r,\omega)$ in $L^2(\Omega)$, when $k\rightarrow \infty$.

Set $u(t,r)(\xi)=X_t^r(\xi)$, then $u(t,r):\Omega\times \R^m\rightarrow\R^m$ defines a semi-flow of homeomorphism (Kunita \cite{8}). By the continuity of $X_t^r(\omega):L^2(\Omega, \R^m)\rightarrow L^2(\Omega, \R^m),t\geq r$, then
$u(t,r,\omega)\rbrac{X_r^{-k\tau}(\xi,\omega)} \xrightarrow[L^2(\Omega)]{k\rightarrow\infty} u(t,r,\omega)\circ \rbrac{X^*(r,\omega)}.$
But
$$u(t,r,\omega)\rbrac{X_r^{-k\tau}(\xi,\omega)} = X_t^{-k\tau}(\xi,\omega) \xrightarrow[L^2(\Omega)]{k\rightarrow\infty} X^*(t,\omega).$$
So
$u(t,r,\omega)\rbrac{X^*(r,\omega)} = X^*(t,\omega),\ \mathbb{P} -a.s.$

Taking some other initial value $\eta$ satisfying Condition (2), we have 
\begin{align*}
\norm{X_r^*-X_r^{-k\tau}(\eta)}_2\leq\norm{X_r^*-X_r^{-k\tau}(\xi)}_2+\norm{X_r^{-k\tau}(\xi)-X_r^{-k\tau}(\eta)}_2.
\end{align*}
Applying Lemma \ref{lemma of continuous random periodic solution wrt different initial values} again, we can make the right-hand side small enough when $k\rightarrow \infty$. Therefore the convergence is independent of the initial value.

Now we need to prove the random periodicity of the $X^*(r,\omega)$. Note by the continuity of $f$ and $g$,  
\begin{eqnarray*}
X_{r+\tau}^{-(k-1)\tau}(\xi)
=e^{A(r+k\tau)}\xi+\int_{-k\tau}^{r}  e^{A(r-s)}[f(s,X_{s+\tau}^{-(k-1)\tau}(\xi))ds+g(s,X_{s+\tau}^{-(k-1)\tau}(\xi))d\widetilde W_{s}].
\end{eqnarray*}
where $\widetilde W_s:=(\theta_\tau\omega)(s)=W_{s+\tau}-W_\tau$. On the other hand,
\begin{eqnarray*}
\theta_\tau X_{-k\tau}^{r}(\xi) =e^{A(r+k\tau)}\theta_\tau\xi+\int_{-k\tau}^{r}  e^{A(r-s)}[f(s,\theta_\tau X_{s}^{-k\tau})ds+g(s,\theta_\tau X_{s}^{-k\tau})d\widetilde W_{s}],
\end{eqnarray*}
By pathwise uniqueness of the solution of (\ref{equations of original period SDE}), we have
\begin{eqnarray}
X_r^{-k\tau}(\theta_\tau\omega,\xi(\theta_\tau\omega))=\theta_\tau X_{r}^{-k\tau}(\xi)=X_{r+\tau}^{-(k-1)\tau}(\omega,\xi(\omega)).\label{equality in theorem 2.6}
\end{eqnarray}
From the proof of convergence we have 
\begin{align*}
X_{r+\tau}^{-(k-1)\tau}(\omega,\xi)    \xrightarrow[L^2(\Omega)]{k\rightarrow\infty} X^*(r+\tau,\omega),\ \ 
X_{r}^{-k\tau}(\theta_\tau\omega,\xi(\theta_\tau\omega))  \xrightarrow[L^2(\Omega)]{k\rightarrow\infty} X^*(r,\theta_\tau\omega).
\end{align*}
Therefore $X^*(r+\tau,\omega)=X^*(r,\theta_\tau\omega),\ \mathbb{P}-a.s.$
\end{proof}

\section{Numerical approximation for random periodic solution}
\subsection{Euler-Maruyama scheme}
In this section, we will introduce the basic Euler-Maruyama method to approximate the solution on infinite horizon. Take $\dt=\tau /n$, which will be taken to be sufficiently small such that $\Delta t\leq {1\over \rho}$, for some $n\in \N$,  in the remaining part of the paper. Let $N=kn$. The time domain from time $-k\tau$ to time 0 is divided into $N$ intervals of length $\dt$ such that $N\Delta t=k\tau$. The scheme starts from an $\mathcal{F}^{-k\tau}$-measurable random variable $\xi$ at a time $-k\tau$. At each of the points $i\dt$ we set the value $\hx_{-k\tau+i\dt}^{-k\tau}$ with the iteration formula
\begin{eqnarray}
\hxit{(i+1)}
          &=&\hxit{i}+A\hxit{i}\dt+f(i\dt,\hxit{i})\dt\notag\\
                  & &+g(i\dt,\hxit{i})\rbrac{W_{-k\tau+(i+1)\dt}-W_{-k\tau+i\dt}},\label{{equation of scheme of discrete random periodic solution by steps}}
\end{eqnarray}
where $i=0, 1, 2, \ldots,$ and $\hxit{0}=\xi$.

It is easy to see that for any $M\geq 0$,
\begin{eqnarray}
\hxit{M}&=&(\At)^M\xi+\dt\sum_{i=0}^{M-1}(\At)^{M-i-1}f(i\dt,\hxit{i})\notag\\
&&+\sum_{i=0}^{M-1}(\At)^{M-i-1}g(i\dt,\hxit{i})\rbrac{W_{-k\tau+(i+1)\dt}-W_{-k\tau+i\dt}}\label{equation of scheme of discrete random periodic solution by sum}.
\end{eqnarray}
Moreover, we can set up a discrete semi-flow given by 
$\hat u_{i,j}(\xi)=\hat X_{i\Delta t}^{j\Delta t}(\xi),\  i\geq j,\  i,j\in \{-kn,-kn+1,\cdots \},\ \hat\theta =\theta _{\Delta t}, \ \hat \theta^n=\hat\theta\hat\theta\cdots\hat\theta.$
Then it is easy to see that $u$ satisfies the semi-flow property 
$\hat u_{i,j}(\omega)\circ \hat u_{j,l}(\omega)=\hat u_{i,l}(\omega), \ {\rm for }\ i\geq j\geq l,$
and the periodic property
$\hat u_{i+n,j+n}(\omega)=\hat u_{i,j}(\hat\theta ^n\omega). \ {\rm for }\ i\geq j.$

In order to prove the convergence of the discretized semi-flow to a random periodic solution, we first derive some similar estimates as in Lemma \ref{lemma of continuous random periodic solution bounded} and Lemma \ref{lemma of continuous random periodic solution wrt different initial values}. Then a discrete analogue of Theorem \ref{theorem of continuous random periodic solution convergence} will give us the result.

\begin{lemma}\label{lemma of discrete random periodic solution bounded}
Assume Conditions (A), (1) and (2). Then there exists a constant $\widehat{C}>0$ such that for any natural numbers $k\geq 0$, $M\geq 0$, and sufficiently small $\dt$,  the numerical solution $\hxit{M}$ defined by  (\ref{equation of scheme of discrete random periodic solution by sum}) satisfies
$\E\abs{\hxit{M}}^2\leq\widehat{C}.$ 
\end{lemma}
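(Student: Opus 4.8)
The plan is to mimic the continuous argument of Lemma~\ref{lemma of continuous random periodic solution bounded}, replacing the It\^o-formula / Gronwall machinery by its discrete counterpart. Since the linear part is treated explicitly, the natural discrete analogue of the weight $e^{2\alpha r}$ is the factor $(\at)^{-2M}$, and I expect the whole proof to rest on controlling the one-step growth of $(\at)^{-2(i+1)}\E\abs{\hxit{(i+1)}}^2$ relative to $(\at)^{-2i}\E\abs{\hxit{i}}^2$.

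Let me sketch the key steps. First I would write $\hxit{(i+1)}=(\At)\hxit{i}+f(i\dt,\hxit{i})\dt+g(i\dt,\hxit{i})\dW_i$, where $\dW_i=W_{-k\tau+(i+1)\dt}-W_{-k\tau+i\dt}$, and take the conditional expectation of $\abs{\hxit{(i+1)}}^2$ given $\mathcal F^{-k\tau+i\dt}$. The cross term involving $g\,\dW_i$ vanishes in expectation by adaptedness and independence of increments, leaving
\begin{align*}
\E\abs{\hxit{(i+1)}}^2 \leq{}& \E\abs{(\At)\hxit{i}}^2
 +2\E\sbrac{\rbrac{(\At)\hxit{i}}^T f(i\dt,\hxit{i})}\dt\\
 &+\E\abs{f(i\dt,\hxit{i})}^2\dt^2
 +\E\abs{g(i\dt,\hxit{i})}^2\dt.
\end{align*}
The decisive estimate is on the first term: since $A$ is symmetric negative-definite with spectrum in $[-\rho,-\abs{\lambda_1}]$, the matrix $\At$ is symmetric with eigenvalues $1+\lambda_j\dt\in[1-\rho\dt,\,1-\abs{\lambda_1}\dt]$, so under the standing assumption $\dt\le1/\rho$ we have $\abs{(\At)x}\le(1-\abs{\lambda_1}\dt)\abs{x}\le(\at)\abs{x}$ (recall $\alpha<\abs{\lambda_1}$), giving $\abs{(\At)x}^2\le(\at)^2\abs{x}^2$. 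The remaining terms are handled by the linear-growth bounds $\abs{f}\le\beta_1\abs{x}+C_1$, $\abs{g}\le\beta_2\abs{x}+C_2$, together with Young's inequality exactly as in the continuous proof (introducing the same $\varepsilon>0$ with $(\beta_1+\tfrac{\beta_2^2}{2})(1+\varepsilon)<\alpha$), to bound the cross terms and the $\abs{f}^2\dt^2$, $\abs{g}^2\dt$ contributions by a multiple of $\E\abs{\hxit{i}}^2$ plus constants.

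Collecting these, I expect to reach a one-step recursion of the form $\E\abs{\hxit{(i+1)}}^2\le q_{\dt}\,\E\abs{\hxit{i}}^2+R_{\dt}$, where the contraction factor satisfies $q_{\dt}\le(\at)^2+\dt\,K_3+O(\dt^2)$ with $K_3=(2\beta_1+\beta_2^2)(1+\varepsilon)<2\alpha$; the point is that for $\dt$ small the net factor obeys $q_{\dt}\le(1-c\dt)<1$ for some $c>0$, while $R_{\dt}=O(\dt)$. Iterating the linear recursion gives $\E\abs{\hxit{M}}^2\le q_{\dt}^{M}\norm{\xi}_2^2+R_{\dt}\sum_{j=0}^{M-1}q_{\dt}^{j}\le\norm{\xi}_2^2+\frac{R_{\dt}}{1-q_{\dt}}$, and since $R_{\dt}/(1-q_{\dt})=O(\dt)/O(\dt)$ stays bounded as $\dt\to0$, Condition~(2) yields a bound $\widehat C$ uniform in $k$, $M$ and in all sufficiently small $\dt$.

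The main obstacle will be the bookkeeping that shows the contraction factor $q_{\dt}$ is genuinely below $1$ \emph{uniformly} in $\dt$, and that the ratio $R_{\dt}/(1-q_{\dt})$ is bounded uniformly as $\dt\to0$. Unlike the continuous case, where the estimate $\abs{(\At)x}\le(\at)\abs{x}$ is exact, the discrete scheme contributes the extra $O(\dt^2)$ term $\E\abs{f}^2\dt^2$ from the explicit forward step; I must verify that this term is dominated once $\dt$ is small enough and does not destroy the strict inequality $K_3<2\alpha$. The condition $\dt\le1/\rho$ is exactly what guarantees $0<\at$ and $\abs{1+\lambda_j\dt}\le\at$ for every eigenvalue, so I would make sure every use of $(\at)>0$ and of the contractivity of $\At$ is justified by this assumption, and keep all constants independent of $k$ and $M$ throughout.
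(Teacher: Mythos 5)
Your proposal is correct and takes essentially the same route as the paper: your spectral bound $\abs{(I+A\Delta t)x}\leq(1-\alpha\Delta t)\abs{x}$ under $\Delta t\leq 1/\rho$ is exactly the paper's non-positive-definiteness of $\left(\frac{I+A\Delta t}{1-\alpha\Delta t}-I\right)\left(\frac{I+A\Delta t}{1-\alpha\Delta t}+I\right)$, and your one-step recursion with contraction factor $q_{\Delta t}\leq 1-(2\alpha-K_3)\Delta t+O((\Delta t)^2)<1$ iterated as a geometric series is the paper's weighted telescoping identity followed by the discrete Gronwall inequality under the condition $(1-\alpha\Delta t)^2(1+\widehat{K}_3)<1$. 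You also correctly anticipate the paper's handling of the extra explicit-step terms, which appear there as the $\Delta t(\beta_1^2+2\beta_1\abs{A})$ contribution inside $\widehat{K}_3$, absorbed by the strict gap $(1+\widehat{\varepsilon})(2\beta_1+\beta_2^2+\Delta t(\beta_1^2+2\beta_1\abs{A}))+\alpha^2\Delta t<2\alpha$.
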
 
\begin{proof}
We still choose $\alpha$ such that
$\beta_1+\frac{\beta^2}{2}<\alpha<\abs{\lambda_1}.$
Then for any $M\geq 0$,
\begin{eqnarray}&&\rbrac{\at}^{-2M}\abs{\hxit{M}}^2\notag\\
&=&\abs{\xi}^2+\sum_{i=0}^{M-1}\rbrac{\at}^{-2i}\rbrac{\frac{\abs{\hxit{(i+1)}}^2}{\rbrac{\at}^2}-\abs{\hxit{i}}^2}.\label{equation of discrete rds expansion}
\end{eqnarray}
This is not hard to verify by expanding the sum and noting cancellations.
Notice that
\begin{eqnarray}
&&\frac{\abs{\hxit{(i+1)}}^2}{\rbrac{\at}^2}-\abs{\hxit{i}}^2\notag\\
&=&\rbrac{\rbrac{\hxit{i}}\rbrac{\frac{\At}{\at}-I}+\frac{\dt}{\at}f(i\dt,\hxit{i})^T \right.\notag\\
&&\hspace{2.5cm}   +\left.\frac{\rbrac{W_{-k\tau+(i+1)\dt}-W_{-k\tau+i\dt}}^T g(i\dt,\hxit{i})^T}{\at}}\notag\\
&&\times\rbrac{\rbrac{\frac{\At}{\at}+I}\hxit{i}+\frac{\dt}{\at}f(i\dt,\hxit{i})\right.\notag\\
&&\hspace{2.5cm}   +\left.\frac{g(i\dt,\hxit{i})\rbrac{W_{-k\tau+(i+1)\dt}-W_{-k\tau+i\dt}}}{\at}}
\label{equation of expansion of square difference}
\end{eqnarray}Note $\rbrac{\frac{\At}{\at}-I}\rbrac{\frac{\At}{\at}+I}$ is non-positive 
definite, where $\dt$ satisfies $0<\dt\leq\frac{1}{\rho}$ as defined before, and for 
each $i$, $f(i\dt,\hxit{i})$ and $g(i\dt,\hxit{i})$ are both independent of $\rbrac{W_{-k\tau+(i
+1)\dt}-W_{-k\tau+i\dt}}$. Take expectation on both sides of (\ref{equation of discrete rds expansion}), consider (\ref{equation of expansion of square difference}), apply the linear growth property and Young's inequality to have
\begin{eqnarray}
&&\hskip 1cm \rbrac{\at}^{-2M}\E\abs{\hxit{M}}^2\label{inequality of discrete approximation in lemma}\\
&\leq&\norm{\xi}_2^2+\sum_{i=0}^{M-1}\rbrac{\at}^{-2i}\rbrac{\frac{\dt}{\at}}^2\E\abs{f(i\dt,\hxit{i})}^2\notag\\
&&+\sum_{i=0}^{M-1}\rbrac{\at}^{-2i}\frac{\dt}{\rbrac{\at}^2}\E\abs{g(i\dt,\hxit{i})}^2\notag\\
&&+\sum_{i=0}^{M-1}\rbrac{\at}^{-2i}\frac{2\dt}{\rbrac{\at}^2}\E\sbrac{\rbrac{\hxit{i}}^T\rbrac{\At}f(i\dt,\hxit{i})}
\notag\\
&\leq&\hk{1}+\rbrac{\at}^{-2M}\hk{2}+\hk{3}\sum_{i=0}^{M-1}\rbrac{\at}^{-2i}\E\abs{\hxit{i}}^2,\notag
\end{eqnarray}where,
\begin{eqnarray*}
&&\hk{1}=\norm{\xi}_2^2,\ \hk{3}=\frac{\dt}{\rbrac{\at}^2}\rbrac{1+\widehat{\varepsilon }}\rbrac{2\beta_1+\beta_2^2+\dt\rbrac{\beta_1^2+2\beta_1\abs{A}}},\\
&&\hk{2}=\frac{C_1^2\rbrac{\dt}^2+C_2^2\dt}{2\alpha\dt-\alpha^2\rbrac{\dt}^2}+\frac{\dt}{2\alpha\dt-\alpha^2\rbrac{\dt}^2}\frac{\rbrac{C_1+\beta_2C_2+\dt C_1\rbrac{\beta_1+\abs{A}}}^2}
{\widehat{\varepsilon }\rbrac{2\beta_1+\beta_2^2+\dt\rbrac{\beta_1^2+2\beta_1\abs{A}}}}.
\end{eqnarray*}
Here $\dt$ and $\widehat{\varepsilon }$ need to be chosen small enough such that
\begin{align*}
\rbrac{1+\widehat{\varepsilon }}\rbrac{2\beta_1+\beta_2^2+\dt\rbrac{\beta_1^2+2\beta_1\abs{A}}}+\alpha^2\dt<2\alpha.
\end{align*}
This guarantees that
$\rbrac{\at}^2\rbrac{1+\hk{3}}<1.$
By the discrete Gronwall inequality,
\begin{eqnarray*}
&&\rbrac{\at}^{-2M}\E\abs{\hxit{M}}^2\\
&\leq&\hk{1}+\hk{2}\rbrac{\at}^{-2M}+\sum_{i=0}^{M-1}\rbrac{\hk{1}+\hk{2}\rbrac{\at}^{-2i}}\hk{3}\rbrac{1+\hk{3}}^{M-i-1}\\
\end{eqnarray*}
It turns out that,
\begin{align*}
\E\abs{\hxit{M}}^2
\leq&\hk{2}+\hk{1}\rbrac{\rbrac{1+\hk{3}}\rbrac{\at}^2}^M\\
&  +\frac{\hk{2}\hk{3}\rbrac{\at}^2\rbrac{1-\rbrac{\rbrac{1+\hk{3}}\rbrac{\at}^2}^M}}{1-\rbrac{1+\hk{3}}\rbrac{\at}^2}
\leq\widehat{C}.
\end{align*}
Note the choice of the constant $\widehat{C}$ is independent of $k$ and the lemma holds for sufficiently small time-step $\dt$ and constant $\widehat{\varepsilon }$. 
\end{proof}

The following lemma is a discrete analogue of Lemma \ref{lemma of continuous random periodic solution wrt different initial values}. 

\begin{lemma}\label{lemma of discrete random periodic solution wrt different initial values}
Denote by $\hxit{M}$ and $\widehat{Y}_{-k\tau+M\dt}^{-k\tau}$ solutions of the Euler scheme with initial values $\xi$ and $\eta$ respectively. Assume Conditions (A), (1) and Condition (2) for both initial values. Let $\dt=\tau/n$, $n\in\Z^+$, be sufficiently small such that $0<\dt\leq \frac{1}{\rho}$. 
Then for any $\varepsilon >0$, there exists an integer $M^*>0$ such that for any $M\geq M^*$, we have
$\norm{\hxit{M}-\widehat{Y}_{-k\tau+M\Delta t}^{-k\tau}}_2<\varepsilon .$
\end{lemma}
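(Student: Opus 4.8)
The plan is to mimic the boundedness argument of Lemma \ref{lemma of discrete random periodic solution bounded}, but applied to the difference of the two Euler trajectories, so that the global Lipschitz structure produces a purely quadratic (rather than affine) recursion and hence geometric decay. Write $c:=\at$ and set $\widehat\zeta_i:=\hxit{i}-\widehat{Y}_{-k\tau+i\dt}^{-k\tau}$, with increment $\Delta W_i:=W_{-k\tau+(i+1)\dt}-W_{-k\tau+i\dt}$. Subtracting the two Euler one-step updates gives
\begin{align*}
\widehat\zeta_{i+1}=(\At)\widehat\zeta_i&+\big(f(i\dt,\hxit{i})-f(i\dt,\widehat{Y}_{-k\tau+i\dt}^{-k\tau})\big)\dt\\
&+\big(g(i\dt,\hxit{i})-g(i\dt,\widehat{Y}_{-k\tau+i\dt}^{-k\tau})\big)\Delta W_i,
\end{align*}
with $\widehat\zeta_0=\xi-\eta$. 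Exactly as in (\ref{equation of discrete rds expansion}) I would telescope
\begin{align*}
c^{-2M}\abs{\widehat\zeta_M}^2=\abs{\xi-\eta}^2+\sum_{i=0}^{M-1}c^{-2i}\Big(\tfrac{\abs{\widehat\zeta_{i+1}}^2}{c^2}-\abs{\widehat\zeta_i}^2\Big).
\end{align*}

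Next I would expand each bracket through the identity $\tfrac{\abs{v}^2}{c^2}-\abs{u}^2=(\tfrac{v}{c}-u)^T(\tfrac{v}{c}+u)$ with $v=\widehat\zeta_{i+1}$, $u=\widehat\zeta_i$, as in (\ref{equation of expansion of square difference}), and take expectations term by term. Three features make the surviving terms manageable. First, the purely linear contribution is $\widehat\zeta_i^T\big((\tfrac{\At}{c})^2-I\big)\widehat\zeta_i$, and the matrix $(\tfrac{\At}{c})^2-I$ is non-positive-definite for $0<\dt\le 1/\rho$ (its eigenvalues are $\tfrac{(1+\lambda_j\dt)^2}{(1-\alpha\dt)^2}-1\le 0$ because $\abs{\lambda_j}\ge\abs{\lambda_1}>\alpha$), so this term is $\le 0$ and may simply be discarded. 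Second, every term linear in $\Delta W_i$ vanishes in expectation, since $\widehat\zeta_i$ and the increments of $f$ and $g$ are $\mathcal{F}^{-k\tau+i\dt}$-measurable and independent of $\Delta W_i$, which has mean zero; using $\E[\Delta W_i\Delta W_i^T]=\dt\,I$, the only surviving noise contribution is $\tfrac{\dt}{c^2}\E\abs{g(i\dt,\hxit{i})-g(i\dt,\widehat{Y}_{-k\tau+i\dt}^{-k\tau})}^2\le \tfrac{\dt}{c^2}\beta_2^2\,\E\abs{\widehat\zeta_i}^2$. Third---and this is the key simplification over Lemma \ref{lemma of discrete random periodic solution bounded}---the Lipschitz bounds $\abs{f(i\dt,\hxit{i})-f(i\dt,\widehat{Y}_{-k\tau+i\dt}^{-k\tau})}\le\beta_1\abs{\widehat\zeta_i}$ and the analogue for $g$ are already quadratic in $\widehat\zeta_i$, so there are no additive constants and no Young's inequality is needed; the remaining drift cross-terms are bounded by constants times $\dt\,\E\abs{\widehat\zeta_i}^2$, the leading one contributing $\approx 2\beta_1\dt\,\E\abs{\widehat\zeta_i}^2$.

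Collecting these estimates yields a recursion
\begin{align*}
c^{-2M}\E\abs{\widehat\zeta_M}^2\le\norm{\xi-\eta}_2^2+\widehat J\sum_{i=0}^{M-1}c^{-2i}\E\abs{\widehat\zeta_i}^2,\qquad \widehat J=\tfrac{\dt}{c^2}\big(2\beta_1+\beta_2^2\big)+O(\dt^2),
\end{align*}
to which the discrete Gronwall inequality applies, giving $c^{-2M}\E\abs{\widehat\zeta_M}^2\le\norm{\xi-\eta}_2^2(1+\widehat J)^M$ and hence $\E\abs{\widehat\zeta_M}^2\le\norm{\xi-\eta}_2^2\,q^M$ with $q:=(1+\widehat J)c^2$. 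A short computation gives $q=1-\dt(2\alpha-2\beta_1-\beta_2^2)+O(\dt^2)$, and since $\alpha>\beta_1+\tfrac{\beta_2^2}{2}$ forces $2\alpha-2\beta_1-\beta_2^2>0$, we have $q<1$ for all sufficiently small $\dt$. Because $\norm{\xi-\eta}_2\le 2K^*$ by Condition (2) and $q$ is independent of $k$, the bound $\norm{\widehat\zeta_M}_2\le 2K^*q^{M/2}$ is geometric and uniform in $k$; choosing $M^*$ with $2K^*q^{M^*/2}<\varepsilon$ then gives the claim for all $M\ge M^*$.

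The main obstacle I anticipate is purely the bookkeeping in the expansion step: verifying the sign of the quadratic matrix term, confirming that all $\Delta W_i$-linear terms are killed in expectation, and tracking the $O(\dt)$ versus $O(\dt^2)$ orders precisely enough to see that the net coefficient of $\dt$ in $q$ is the strictly negative $-(2\alpha-2\beta_1-\beta_2^2)$. None of this is conceptually hard once Lemma \ref{lemma of discrete random periodic solution bounded} is in hand, since the machinery is identical; the one genuinely new point is that the Lipschitz structure removes the affine terms, so the decay emerges cleanly without the Young's-inequality parameter $\widehat\varepsilon$ used there.
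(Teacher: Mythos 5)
Your proposal is correct and follows essentially the same route as the paper's proof: the same telescoped expansion with weight $(\at)^{-2i}$, the same sign argument for the quadratic matrix term under $0<\dt\leq 1/\rho$, the same vanishing of the $\Delta W_i$-linear terms, and the same discrete Gronwall step yielding $\E\abs{\widehat\zeta_M}^2\leq\norm{\xi-\eta}_2^2\big((\at)^2(1+\widehat J)\big)^M$ with contraction factor below $1$ for small $\dt$ (your $\widehat J$ matches the paper's $\hk{4}=\frac{\dt}{(\at)^2}\big(2\beta_1+\beta_2^2+\dt(\beta_1^2+2\beta_1\abs{A})\big)$, and your $O(\dt^2)$ bookkeeping is equivalent to the paper's explicit smallness condition $2\beta_1+\beta_2^2+\dt(\beta_1^2+2\beta_1\abs{A})+\alpha^2\dt<2\alpha$). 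Your closing observation that the Lipschitz structure removes the affine terms and hence the Young's-inequality parameter $\widehat\varepsilon$ is exactly how the paper's proof simplifies relative to Lemma \ref{lemma of discrete random periodic solution bounded}, and your use of Condition (2) to make the bound uniform in $k$ is a correct (slightly more explicit) finishing touch.
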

\begin{proof}
According to scheme (\ref{equation of scheme of discrete random periodic solution by sum}) we have
\begin{align*}
\hxit{M}-\widehat{Y}_{-k\tau+M\Delta t}^{-k\tau}
=&\rbrac{\At}^M\rbrac{\xi-\eta}+\dt\sum_{i=0}^{M-1}\rbrac{\At}^{M-i-1}\widehat{F}_i\\
&+\sum_{i=0}^{M-1}\rbrac{\At}^{M-i-1}\widehat{G}_i\rbrac{W_{-k\tau+(i+1)\dt}-W_{-k\tau+i\dt}}.
\end{align*}
Here $
\widehat{F}_i=f(i\dt,\hxit{i})-f(i\dt,\widehat{Y}_{-k\tau+i\Delta t}^{-k\tau}),\ 
\widehat{G}_i=g(i\dt,\hxit{i})-g(i\dt,\\\widehat{Y}_{-k\tau+i\Delta t}^{-k\tau}).
$
Denote $\widehat{\zeta}_i=\hxit{i}-\widehat{Y}_{-k\tau+i\Delta t}^{-k\tau}$.
Then by Condition (1), we have $\abs{\widehat{F}_i}\leq\beta_1\abs{\widehat{\zeta}_i}$ and $\abs{\widehat{G}_i}\leq\beta_2\abs{\widehat{\zeta}_i}$.
According to the method used in Lemma \ref{lemma of discrete random periodic solution bounded}, we get the following result similar to inequality (\ref{inequality of discrete approximation in lemma}) 
\begin{align*}
\rbrac{\at}^{-2M}\E\abs{\widehat{\zeta}_M}^2
\leq& \norm{\xi-\eta}_2^2+\sum_{i=0}^{M-1}\rbrac{\at}^{-2i}\rbrac{\frac{\dt}{\at}}^2\E\abs{\widehat{F}_i}^2\\
&+\sum_{i=0}^{M-1}\rbrac{\at}^{-2i}\frac{\dt}{\rbrac{\at}^2}\E\abs{\widehat{G}_i}^2\\
&+\sum_{i=0}^{M-1}\rbrac{\at}^{-2i}\frac{2\dt}{\rbrac{\at}^2}\E\sbrac{\rbrac{\widehat{\zeta}_i}^T\rbrac{\At}\widehat{F}_i}\\
\leq&\norm{\xi-\eta}_2^2+\hk{4}\sum_{i=0}^{M-1}\rbrac{\at}^{-2i}\E\abs{\widehat{\zeta}_i}^2,
\end{align*}
where
$\hk{4}=\frac{\dt}{\rbrac{\at}^2}\rbrac{2\beta_1+\beta_2^2+\dt\rbrac{\beta_1^2+2\beta_1\abs{A}}}.$
We choose $\dt$ small enough such that
$2\beta_1+\beta_2^2+\dt\rbrac{\beta_1^2+2\beta_1\abs{A}}+\alpha^2\dt<2\alpha.$
Then, we have\\
$\rbrac{\at}^2\rbrac{1+\hk{4}}<1.$
Again the discrete Gronwall inequality implies
\begin{align*}
\rbrac{\at}^{-2M}\E\abs{\widehat{\zeta}_M}^2\leq\norm{\xi-\eta}_2^2\prod_{i=0}^{M-1}\rbrac{1+\hk{4}}
=\norm{\xi-\eta}_2^2\rbrac{1+\hk{4}}^M.
\end{align*}
Finally
$\E\abs{\widehat{\zeta}_M}^2\leq\norm{\xi-\eta}_2^2\rbrac{\rbrac{\at}^2\rbrac{1+\hk{4}}}^M<\varepsilon$
with sufficiently large $M$. 
\end{proof}

In the numerical scheme we consider the process as two parts, $[-k\tau,0)$ and $[0,r]$. Define
\begin{eqnarray}\label{eqn3.6}
\hx_r^{-k\tau}:=\hx(r,0,\omega)\circ\hx_0^{-k\tau},
\end{eqnarray}
where $\hx(r,0,\omega)$, $r\geq0$, is finite time Euler approximation of the solution of stochastic differential equation with time step size $\dt$, till $N'\Delta t\leq r$, where $N'$ is the unique number such that $N'\dt\leq r$ and $(N'+1)\dt> r$. If $N'\dt<r$, define
\begin{eqnarray}\widehat{X}(r,0,\omega)=&
\widehat{X}(N'\dt,0,\omega)+f(N'\dt,\widehat{X}(N'\dt,0,\omega))(r-N^{\prime}\dt)\nonumber\\
&+g(N'\dt,\widehat{X}(N'\dt,0,\omega))(W_r-W_{N'\dt})\label{equation of rest part from 0 to r}
\end{eqnarray}

\begin{lemma}(Continuity of the discrete semi-flow with respect to the initial value)\label{lemma of finite time discrete random periodic solution continuity wrt initial value}
Denote by $\widetilde{X}_{r}^{0}$ and $\widetilde{Y}_{r}^{0}$ the solution of the finite time Euler scheme with the initial values $\widetilde{\xi}$ and $\widetilde{\eta}$ at time 0. Assume Conditions (A), (1) and Condition (2) for both initial values. Let $\dt$ be sufficiently small, $p\geq 1$. Then for any $\varepsilon >0$, there exists a $\delta>0$ such that for any $\norm{\widetilde{\xi}-\widetilde{\eta}}_p<\delta$, we have
\begin{eqnarray}
\norm{\widetilde{X}_{r}^{0}(\omega,\widetilde{\xi})-\widetilde{Y}_{r}^{0}(\omega,\widetilde{\eta})}_p<\varepsilon .\label{inequality of lemma continuous wrt iv}
\end{eqnarray}
\end{lemma}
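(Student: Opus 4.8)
The plan is to mirror the one-step recursion and discrete Gronwall structure of Lemma \ref{lemma of discrete random periodic solution wrt different initial values}, but now working in $L^p$ for general $p\geq 1$ and only over the finitely many steps between time $0$ and time $r$, so that the accumulated growth constant stays bounded rather than needing to be driven to zero. First I would subtract the two Euler recursions, which are driven by the same Brownian path. With $\tzit=\txit-\widetilde{Y}_{i\dt}^0$, $\widetilde{F}_i=f(i\dt,\txit)-f(i\dt,\widetilde{Y}_{i\dt}^0)$ and $\widetilde{G}_i=g(i\dt,\txit)-g(i\dt,\widetilde{Y}_{i\dt}^0)$, the difference obeys
$$\tziot=(\At)\tzit+\widetilde{F}_i\,\dt+\widetilde{G}_i\rbrac{W_{(i+1)\dt}-W_{i\dt}}.$$
Condition (1) gives $\abs{\widetilde{F}_i}\leq\beta_1\abs{\tzit}$ and $\abs{\widetilde{G}_i}\leq\beta_2\abs{\tzit}$, so every term is controlled by $\abs{\tzit}$.

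The core is a one-step bound $\norm{\tziot}_p\leq L\,\norm{\tzit}_p$ with a constant $L$ close to $1$, which I would obtain by applying Minkowski's inequality to the three terms. The linear term is handled by the operator norm $\norm{\At}_{op}=1-\abs{\lambda_1}\dt\leq1$; indeed, since $A$ is symmetric, the eigenvalues of $\At$ are $1+\lambda_j\dt\in[1-\rho\dt,\,1+\lambda_1\dt]\subseteq[0,1)$ for $\dt\leq1/\rho$, so $\norm{(\At)\tzit}_p\leq\norm{\tzit}_p$. The drift term contributes $\beta_1\dt\,\norm{\tzit}_p$. For the stochastic term I would condition on $\mathcal{F}^{i\dt}$: since $\widetilde{G}_i$ is $\mathcal{F}^{i\dt}$-measurable while $W_{(i+1)\dt}-W_{i\dt}$ is an independent centred Gaussian with covariance $\dt\,I_d$, its conditional $p$-th moment is at most $C_p\abs{\widetilde{G}_i}^p(\dt)^{p/2}$ for a Gaussian moment constant $C_p$, whence $\norm{\widetilde{G}_i\rbrac{W_{(i+1)\dt}-W_{i\dt}}}_p\leq C_p^{1/p}\beta_2(\dt)^{1/2}\norm{\tzit}_p$. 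Collecting the three contributions gives $L=1+\beta_1\dt+C_p^{1/p}\beta_2(\dt)^{1/2}$.

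Iterating this bound across the $N'=\lfloor r/\dt\rfloor$ full steps yields $\norm{\widetilde{\zeta}_{N'}}_p\leq L^{N'}\norm{\txt-\widetilde{\eta}}_p$. The final partial step (\ref{equation of rest part from 0 to r}) has the same form without the $A$-term and with time increment $r-N'\dt<\dt$, so the identical three-term estimate adds at most one further factor of $L$. Since $r$ and $\dt$ are fixed, $C:=L^{N'+1}$ is a finite constant and $\norm{\widetilde{X}_r^0(\omega,\txt)-\widetilde{Y}_r^0(\omega,\widetilde{\eta})}_p\leq C\,\norm{\txt-\widetilde{\eta}}_p$; choosing $\delta=\varepsilon/C$ then gives (\ref{inequality of lemma continuous wrt iv}).

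The main obstacle is the general-$p$ moment estimate of the stochastic increment. The earlier lemmas used the It\^o isometry, which is special to $p=2$; here the new ingredient is to pass to $L^p$ by conditioning on the past and invoking the explicit Gaussian moments of the Brownian increment (equivalently the Burkholder-Davis-Gundy inequality). Everything else is a routine finite-horizon repackaging of the discrete Gronwall argument already carried out in Lemma \ref{lemma of discrete random periodic solution wrt different initial values}.
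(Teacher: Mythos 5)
Your proposal is correct, but it executes the argument differently from the paper. The paper works with the \emph{global} summed representation (the analogue of (\ref{equation of scheme of discrete random periodic solution by sum}) started at time $0$), splits the three contributions by the power--mean inequality with the factor $3^{p-1}$, distributes the $p$-th power across the two sums at the cost of a H\"older factor $N'^{p-1}$ (so the noise term is bounded termwise, exactly as in your conditional-moment step, producing the $(\dt)^{p/2}$ weight), and then closes with the discrete Gronwall inequality, arriving at $\norm{\tzit[N']}_p^p\leq C_p\norm{\txt-\widetilde{\eta}}_p^p\rbrac{(1+\widetilde{K})(\at)^p}^{N'}$ and the choice $\delta=\frac{\varepsilon}{C_p}(1+C_{p,N'})^{-N'}$. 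You instead iterate a \emph{one-step} Minkowski estimate $\norm{\tziot}_p\leq L\norm{\tzit}_p$ with $L=1+\beta_1\dt+C_p^{1/p}\beta_2(\dt)^{1/2}$, dispensing with Gronwall altogether; your treatment of the linear part via the spectral bound $\abs{(\At)x}\leq(1-\abs{\lambda_1}\dt)\abs{x}$ for $\dt\leq 1/\rho$ is sound, and your handling of the noise term by conditioning on $\mathcal{F}^{i\dt}$ and invoking Gaussian moments is in fact more careful than the paper's, which silently absorbs the Gaussian $p$-th moment constant for $p>2$ (for $1\leq p<2$ Jensen gives constant $1$). Your route yields the cleaner Lipschitz constant $L^{N'+1}$, far smaller than the paper's $C_p(1+3^{p-1}N'^{p-1})^{N'}$, though neither is uniform as $\dt\to 0$ (your exponent behaves like $\beta_2 r/\sqrt{\dt}$), which is immaterial here since the lemma fixes $\dt$ and is only invoked at fixed $\dt$ in Theorem \ref{theorem of discrete random periodic solution convergence}. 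One caveat you should make explicit: the conditioning step needs $\widetilde{\xi},\widetilde{\eta}$ to be $\mathcal{F}^{0}$-measurable so that $\widetilde{G}_i$ is $\mathcal{F}^{i\dt}$-measurable and independent of the increment $W_{(i+1)\dt}-W_{i\dt}$; this is the paper's standing measurability assumption on initial data and is implicitly used in its proof as well. Your treatment of the final fractional step via (\ref{equation of rest part from 0 to r}) matches the paper's.
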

\begin{proof}
Note that $\widetilde{X}_{N'\dt}^0$ and $\widetilde{Y}_{N'\dt}^0$ satisfy analogues of (\ref{equation of scheme of discrete random periodic solution by sum}), with initial value $\widetilde{\xi}$ and $\widetilde{\eta}$ at time 0 instead of $-k\tau$.
Apply the Euler scheme on the finite time $r'=N'\dt$ to obtain 
\begin{eqnarray}
&&\hskip 1cm\abs{\widetilde{X}_{r'}^0(\omega,\widetilde{\xi})-\widetilde{Y}_{r'}^0(\omega,\widetilde{\eta})}^p\label{inequality in continuity lemma expression}\\
&\leq&3^{p-1}\abs{(\At)^{p{N'}}}\abs{\widetilde{\xi}-\widetilde{\eta}}^p+3^{p-1}(\dt)^p\abs{(\At)^{p{N'}}}\abs{\sum_{i=0}^{{N'}-1}(\At)^{-i-1}\widetilde{F}_i}^p\notag\\
&&+3^{p-1}\abs{(\At)^{p{N'}}}\abs{\sum_{i=0}^{{N'}-1}(\At)^{-i-1}\widetilde{G}_i\rbrac{W_{(i+1)\dt}-W_{i\dt}}}^p,\notag
\end{eqnarray}where
$\widetilde{F}_i:=f(i\dt,\widetilde{X}_{i\dt}^0)-f(i\dt,\widetilde{Y}_{i\dt}^0),\ \widetilde{G}_i:=g(i\dt,\widetilde{X}_{i\dt}^0)-g(i\dt,\widetilde{Y}_{i\dt}^0).$
Denote
$\widetilde{\zeta}_i:=\widetilde{X}_{i\dt}^0-\widetilde{Y}_{i\dt}^0.$
For convenience, we denote
$C_p=3^{p-1},\ C_{p,{N'}}=3^{p-1}{N'}^{p-1}.$
Taking expectation on both sides of (\ref{inequality in continuity lemma expression}), and noting that the Lipschitz condition of function $f$ and $g$, we have 
\begin{align*}
(\at)^{-p{N'}}\norm{\widetilde{\zeta}_{N'}}_p^p
\leq&C_p\norm{\widetilde{\xi}-\widetilde{\eta}}_p^p+C_{p,{N'}}(\dt)^p\sum_{i=0}^{{N'}-1}(\at)^{-(i+1)p}\beta_1^p\norm{\widetilde{\zeta}_i}_p^p\\
&+C_{p,{N'}}(\dt)^{p/2}\sum_{i=0}^{{N'}-1}(\at)^{-(i+1)p}\beta_2^p\norm{\widetilde{\zeta}_i}_p^p\\
=&C_p\norm{\widetilde{\xi}-\widetilde{\eta}}_p^p+\widetilde{K}\sum_{i=0}^{{N'}-1}(\at)^{-ip}\norm{\widetilde{\zeta}_i}_p^p,
\end{align*}
where 
$\widetilde{K}=\frac{C_{p,{N'}}\rbrac{(\dt)^p\beta_1^p+(\dt)^{p/2}\beta_2^p}}{(\at)^p},$
which is bounded for any $1\leq p<+\infty$.
Then by the Gronwall inequality, we have
$\norm{\widetilde{\zeta}_{N'}}_p^p\leq C_p\norm{\widetilde{\xi}-\widetilde{\eta}}_p^p\rbrac{(1+\widetilde{K})(\at)^p}^{N'}.$
Note
$(1+\widetilde{K})(\at)^p\leq (\at)^p+C_{p,{N'}}\rbrac{(\dt)^p\beta_1^p+(\dt)^{p/2}\beta_2^p}
\leq 1+C_{p,N'}.$
The result (\ref{inequality of lemma continuous wrt iv}) at $r'=N'\dt$ follows by taking
$\delta=\frac{\varepsilon}{C_p}\rbrac{1+C_{p,N'}}^{-{N'}}.$
Finally (\ref{inequality of lemma continuous wrt iv}) at time $r$ follows from (\ref{equation of rest part from 0 to r}) and the estimate at $r'=N'\dt$.
\end{proof}

\begin{theorem}\label{theorem of discrete random periodic solution convergence}
Assume that Condition (1)  and $\dt$ is fixed and small enough. The time domain is divided as $\tau=n\dt$. Then there exists $\hx_r^*\in L^2\rbrac{\Omega}$ such that for any initial values $\xi$ satisfying Condition (2), the solution of the Euler-Maruyama scheme satisfies
\begin{eqnarray}\lim_{k\rightarrow\infty}\norm{\hx_{r}^{-k\tau}\rbrac{\xi}-\hx_r^*}_2=0,\label{equation of the result of theorem discrete convergence}
\end{eqnarray}and $\hx_r^*$ satisfies the random periodicity property.
\end{theorem}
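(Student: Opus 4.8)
The plan is to transcribe the argument of Theorem \ref{theorem of continuous random periodic solution convergence} into the discrete setting, with the three discrete lemmas playing the roles of their continuous counterparts: Lemma \ref{lemma of discrete random periodic solution bounded} (uniform $L^2$-boundedness) guarantees that the pull-back stays in a fixed ball of $L^2(\Omega)$, Lemma \ref{lemma of discrete random periodic solution wrt different initial values} (decay of the difference of two discrete pull-backs) supplies the contraction that forces the Cauchy property, and Lemma \ref{lemma of finite time discrete random periodic solution continuity wrt initial value} (continuity of the finite-horizon scheme in the initial datum) is used to transfer estimates obtained at the grid time $0$ to the possibly non-grid time $r$ through the decomposition (\ref{eqn3.6}).

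First I would establish the existence of the limit $\hx_r^*$. Using the discrete semi-flow property, write
\[
\hx_r^{-(k+p)\tau}(\xi)=\hx_r^{-k\tau}\bigl(\hx_{-k\tau}^{-(k+p)\tau}(\xi)\bigr),
\]
so that, setting $\eta_{k,p}:=\hx_{-k\tau}^{-(k+p)\tau}(\xi)$, the quantity to control is $\norm{\hx_r^{-k\tau}(\xi)-\hx_r^{-k\tau}(\eta_{k,p})}_2$. By Lemma \ref{lemma of discrete random periodic solution bounded} the datum $\eta_{k,p}$ is bounded in $L^2(\Omega)$ uniformly in $k,p$, so both $\xi$ and $\eta_{k,p}$ meet the hypotheses of Lemma \ref{lemma of discrete random periodic solution wrt different initial values}. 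Since the number of Euler steps from $-k\tau$ to $0$ is $kn\to\infty$, Lemma \ref{lemma of discrete random periodic solution wrt different initial values} makes $\norm{\hx_0^{-k\tau}(\xi)-\hx_0^{-k\tau}(\eta_{k,p})}_2$ smaller than any prescribed $\delta$ for $k$ large. Feeding this into Lemma \ref{lemma of finite time discrete random periodic solution continuity wrt initial value}, applied to the finite-horizon piece $\hx(r,0,\cdot)$ from $0$ to $r$ in (\ref{eqn3.6}), yields $\norm{\hx_r^{-k\tau}(\xi)-\hx_r^{-(k+p)\tau}(\xi)}_2<\varepsilon$ uniformly in $p$; hence $\{\hx_r^{-k\tau}(\xi)\}_k$ is Cauchy in $L^2(\Omega)$ and converges to some $\hx_r^*$. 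The same two lemmas, applied to a second datum $\eta$, show that the limit does not depend on the initial value.

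It then remains to verify the random periodicity of $\hx_r^*$. The flow-invariance $\hat u_{i,j}(\omega)\hx_{j\dt}^*=\hx_{i\dt}^*$ follows by letting $k\to\infty$ in the discrete semi-flow identity $\hat u_{i,j}(\omega)\hx_{j\dt}^{-k\tau}(\xi)=\hx_{i\dt}^{-k\tau}(\xi)$ and using the $L^2$-continuity of the finite-time flow (Lemma \ref{lemma of finite time discrete random periodic solution continuity wrt initial value}). For the shift relation I would exploit the semi-flow periodicity $\hat u_{i+n,j+n}(\omega)=\hat u_{i,j}(\hat\theta^n\omega)$ with $\hat\theta^n=\theta_\tau$, which (matching $i=N'$, $j=-kn$ for a grid point $r=N'\dt$) gives the discrete analogue of (\ref{equality in theorem 2.6}),
\[
\hx_{r+\tau}^{-(k-1)\tau}(\omega,\xi)=\hx_r^{-k\tau}(\theta_\tau\omega,\xi).
\]
Letting $k\to\infty$ (the left-hand side along $-(k-1)\tau\to-\infty$, the right-hand side along $-k\tau\to-\infty$ at $\theta_\tau\omega$), and noting that $\theta_\tau$ is measure-preserving so the shifted datum still satisfies Condition (2), the two sides converge in $L^2(\Omega)$ to $\hx^*(r+\tau,\omega)$ and $\hx^*(r,\theta_\tau\omega)$ respectively; the non-grid case of $r$ is recovered from the $[0,r]$ piece via Lemma \ref{lemma of finite time discrete random periodic solution continuity wrt initial value}. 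This gives $\hx^*(r+\tau,\omega)=\hx^*(r,\theta_\tau\omega)$.

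The main obstacle I anticipate is bookkeeping rather than a new idea. One must carefully track the discrete step count, since the contraction of Lemma \ref{lemma of discrete random periodic solution wrt different initial values} is stated for $M\geq M^*$ and one has to check that $kn\to\infty$ actually drives it; and because $r$ need not lie on the grid, every pull-back must be split as in (\ref{eqn3.6}) and the $[0,r]$ gap closed through the continuity lemma. The one genuinely discrete point requiring care is the derivation of the displayed shift identity from $\hat u_{i+n,j+n}(\omega)=\hat u_{i,j}(\theta_\tau\omega)$, where the index alignment must ensure that a shift of exactly $n$ steps corresponds to the time shift $\tau$.
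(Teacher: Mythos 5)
Your proposal follows essentially the same route as the paper's own proof: a Cauchy sequence at time $0$ built from the semi-flow decomposition and the contraction of Lemma \ref{lemma of discrete random periodic solution wrt different initial values}, transfer to general $r$ through the decomposition (\ref{eqn3.6}) and Lemma \ref{lemma of finite time discrete random periodic solution continuity wrt initial value}, and random periodicity via the discrete analogue of (\ref{equality in theorem 2.6}) passed to the $L^2$ limit on both sides. The only (minor, favourable) difference is that you make explicit the uniform $L^2$-bound on $\eta_{k,p}=\hx_{-k\tau}^{-(k+p)\tau}(\xi)$ from Lemma \ref{lemma of discrete random periodic solution bounded}, needed so that both initial data satisfy Condition (2) in Lemma \ref{lemma of discrete random periodic solution wrt different initial values}, a point the paper leaves implicit.
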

\begin{proof}
Firstly we note that the proof of the convergence of the process $\hx_0^{-k\tau}$ can be made similarly as that of Theorem \ref{theorem of continuous random periodic solution convergence}. According to Lemma \ref{lemma of discrete random periodic solution bounded} we know that for any $M$, we have $\hxit{M}\in L^2\rbrac{\Omega}$. We use a similar construction of a Cauchy sequence as in Theorem \ref{theorem of continuous random periodic solution convergence}. As we assume that $\tau=n\dt$ and $k\tau=kn\dt=:N\dt$, we have the following result by using semi-flow property, for any $m\geq 1$,
\begin{align*}
\hx_0^{-(k+m)\tau}=\hx_{0}^{-(N+mn)\dt}=\hx_{0}^{-N\dt} \circ \hx_{-N\dt}^{-(N+mn)\dt}.
\end{align*}
It is a same process as $\hx_0^{-N\Delta t}$ with a different initial value. By Lemma \ref{lemma of discrete random periodic solution wrt different initial values} we have that for any $\varepsilon >0$ there exists $N^*$ such that for any $N\geq N^*,\dt>0$, we have 
\begin{align*}
\norm{\hx_0^{-k\tau}-\hx_0^{-(k+m)\tau}}_2=\norm{\hx_0^{-N\dt}-\hx_0^{-(N+mn)\dt}}_2<\varepsilon .
\end{align*}	
Then we construct the Cauchy sequence $\hx_i=\hx_0^{-i\tau}$, which converges to some $\hx^*$ in $L^2\rbrac{\Omega}$. We now use the same method to prove the convergence is independent of the initial point. Note for fixed $\dt$,
\begin{align*}
\norm{\hx^*-\hx_{0}^{-k\tau}\rbrac{\eta}}_2\leq\norm{\hx^*-\hx_{0}^{-k\tau}(\xi)}_2+\norm{\hx_{0}^{-k\tau}(\xi)-\hx_{0}^{-k\tau}\rbrac{\eta}}_2\xrightarrow{N\rightarrow\infty} 0,
\end{align*}
where $N \rightarrow \infty$ is equivalent to $k \rightarrow \infty$.

Define $\hx^*(r,\omega):= \hx(r,0,\omega)\circ \hx^*$, $r\geq 0$. According to Lemma \ref{lemma of finite time discrete random periodic solution continuity wrt initial value}, we have
$$\hx_{r}^{-k\tau}(\omega)=\hx(r,0,\omega)\circ \hx_{0}^{-k\tau}(\omega)\xrightarrow[L^2(\Omega)]{k\rightarrow\infty}\hx(r,0,\omega)\circ \hx^*(\omega)=\hx^*(r,\omega),$$
so (\ref{equation of the result of theorem discrete convergence}) holds. On the other hand, similar to the proof of (\ref{equality in theorem 2.6}), we obtain
$$\hx_{r+\tau}^{\tau}(\omega,\xi(\omega))=\hx_{r}^{0}(\theta_\tau\omega,\xi(\theta_\tau\omega))=\theta_\tau\hx_r^0(\omega,\xi(\omega)).$$
Therefore,
\begin{align*}
\hx_{r}^{-k\tau}(\theta_\tau\omega)=\hx(r,0,\theta_\tau\omega)\circ\hx_0^{-k\tau}(\theta_\tau\omega) &\xrightarrow[L^2(\Omega)]{k\rightarrow\infty}\hx(r,0,\theta_\tau\omega)\circ\hx^*(\theta_\tau\omega)=\hx^*(r,\theta_\tau\omega).
\end{align*}
But,
$$\hx_{r+\tau}^{-k\tau+\tau}(\omega) \xrightarrow[L^2(\Omega)]{k\rightarrow\infty} \hx^*(r+\tau,\omega),\ {\rm and}\ 
\hx_{r+\tau}^{-k\tau+\tau}(\omega)=\hx_{r}^{-k\tau}(\theta_\tau\omega),\mathbb{P}-a.s,$$
thus we have
$\hx^*(r+\tau,\omega)=\hx^*(r,\theta_\tau\omega),\mathbb{P}-a.s.$
\end{proof}
\begin{example}
Consider
a specific SDE
\begin{eqnarray}\label{zhao11}
dX_t^{t_0}=-\pi X_t^{t_0}dt  + \sin(\pi t)dt + X_t^{t_0}dW_t.\label{equation of example 1}
\end{eqnarray} 
According to Theorem \ref{theorem of continuous random periodic solution convergence}, (\ref{zhao11}) has a random periodic solution. 
 By Theorem
\ref{theorem of discrete random periodic solution convergence}, its Euler-Maruyama dissertation also has a random periodic path.
To see the ``periodicity" numerically, we provided two methods.  
One approach is to simulate the processes $\hat X_t^*(\omega)=\hx_t^{-6}(\omega, 0.5), -5\leq t\leq 0$, and $\hat X_t^*(\theta _{-2}\omega)=\hx_t^{-6}(\theta_{-2}\omega,0.5), -5\leq t\leq 2$, with the same $\omega$ 
and step size $\Delta t=0.01$ (Fig. \ref{graph of example 1 samples}). One can see that these two trajectories exactly repeat each with a time shift of one period (only comparing the graph of $\hat X_t^*(\theta _{-2}\omega)$ for $-3\leq t\leq 2$).
The second method is the simulation of  $\{\hat X_t^*(\theta_{-t}\omega), 0\leq t\leq 6\}$ for the same realisation $\omega$ and step size as before (Fig. \ref{graph of example 1 periodic}). One can easily see that Fig. \ref{graph of example 1 periodic} is a perfect periodic curve. This agrees with the fact that if $\hat X_t^*(\omega)$ is a random periodic path  iff $\hat X_t^*(\theta_{-t}\omega)$ is periodic, i.e. $\hat X^*_{t+\tau}(\theta_{-(t+\tau)}\omega)=\hat X_t^*(\theta_{-t}\omega)$. Note in theory $ \hat X_t^*=\hat X_t^{-\infty}$, but we take pull-back time $-6$ as this is already enough to generate a good convergence to the random periodic paths $\hat X_t^*(\cdot)$  for $t\geq -5$ by the solution starting at $-6$ from $0.5$  
for both cases.  The choice of the initial position does not affect random periodic paths, but  the time to take for the convergence. 
\vskip-5pt
\begin{figure}
\begin{minipage}{\textwidth}
\centering 
\includegraphics[scale=0.6]{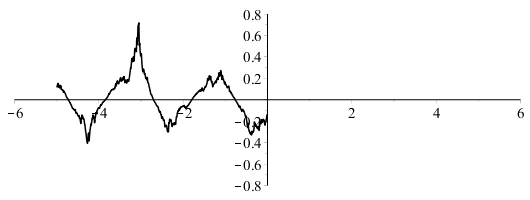}\\
\includegraphics[scale=0.6]{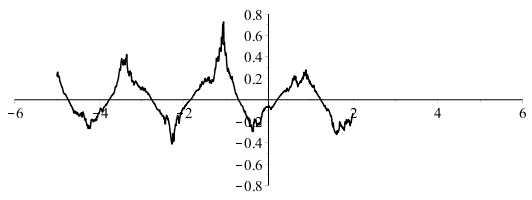}
\end{minipage} 
\caption{Simulations of the processes $\{\hat X_t^*(\omega), -5\leq t\leq 0\}$ and $\{\hat X_t^*(\theta _{-2}\omega), -5\leq t\leq 2\}.$}\label{graph of example 1 samples}
\end{figure}
\begin{figure}
\centering 
\includegraphics[scale=0.6]{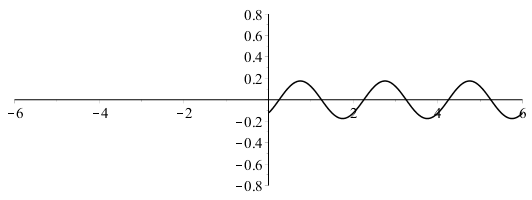}
\caption{Simulation of the process
$\{\hat X_t^*(\theta_{-t}\omega), 0\leq t\leq 6\}$}.
\label{graph of example 1 periodic}
\end{figure}
\end{example}
\subsection{Modified Milstein scheme}
We will consider the Milstein scheme which will increase the convergence order for the infinite horizon problem.\\
\\
\textbf{Condition (1$^{\prime}$)}. {\it Assume there exists a constant $\tau>0$ such that for any $t\in \R$, $x\in \R^m$, 
$f(t+\tau,x)=f(t,x),\ 
g(t+\tau,x)=g(t,x),$
and there exist constants $C_0,\beta_1,\beta_2>0$ with $\beta_1+\frac{\beta_2^2}{2}<\abs{\lambda_1}$ such that for any $s,t\in\R$ and $x\in\R^m$, 
\begin{eqnarray*}
\abs{f(s,x)-f(t,y)}&\leq C_0\abs{s-t} + \beta_1\abs{x-y},\\
\abs{g(s,x)-g(t,y)}&\leq C_0\abs{s-t} + \beta_2\abs{x-y}.
\end{eqnarray*}
Meanwhile, we assume the boundedness of first order partial derivative of function $f$ and $g$ with respect to $x$. 
}
\\

The iteration formula for the modified SRK scheme is
\begin{eqnarray}
\hxit{(i+1)}
          &=&\hxit{i}+A\hxit{i}\dt+f(i\dt,\hxit{i})\dt\notag\\
          &&+g(i\dt,\hxit{i})\rbrac{\dW_i}\label{equation of scheme of discrete random periodic solution by steps for RK1}\\
          &&+\frac{\dZ_i}{2\sqrt{\dt}}\sbrac{f\rbrac{i\dt,\huip{i})}-f\rbrac{i\dt,\huim{i}} }    \notag\\
          &&\hspace{-0.3cm}+\frac{(\dW_i)^2-\dt}{4\sqrt{\dt}}\sbrac{g\rbrac{i\dt,\huip{i})}-g\rbrac{i\dt,\huim{i}} }    \notag,
\end{eqnarray}
with
\begin{align*}
\hu_\pm(\hxit{i})=&\hxit{i}+A\hxit{i}\dt+f(i\dt,\hxit{i})\dt\\
&\pm g(i\dt,\hxit{i})\sqrt{\dt}
\end{align*}
and
\begin{align*}
\dW_i&=\int_{-k\tau+i\dt}^{-k\tau+(i+1)\dt}dW_s= W_{-k\tau+(i+1)\dt}-W_{-k\tau+i\dt},\\
\dZ_i&=\int_{-k\tau+i\dt}^{-k\tau+(i+1)\dt}\int_{-k\tau+i\dt}^s dW_u ds,
\end{align*}
where $i=0, 1, 2, \ldots,$ and $\hxit{0}=\xi$. Here we used the approximation of $\dZ_i$ by the method of Kloeden and Platen in \cite{5}.

\begin{theorem}\label{thm10}
Assume that Conditions (A), ($1^{\prime}$)  hold and $\dt$ is fixed and small enough. The time domain is divided as $\tau=n\dt$. Then there exists $\hx_r^*\in L^2\rbrac{\Omega}$ such that for any initial values $\xi$ satisfying Condition (2), the solution of the Milstein scheme satisfies
\begin{eqnarray}\lim_{k\rightarrow\infty}\norm{\hx_{r}^{-k\tau}\rbrac{\xi}-\hx_r^*}_2=0,
\end{eqnarray}and $\hx_r^*$ satisfies the random periodicity property.
\end{theorem}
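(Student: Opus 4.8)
The plan is to reproduce, step for step, the pull-back argument used to prove Theorem \ref{theorem of discrete random periodic solution convergence} for the Euler--Maruyama scheme, after first establishing the three Milstein-scheme analogues of Lemmas \ref{lemma of discrete random periodic solution bounded}, \ref{lemma of discrete random periodic solution wrt different initial values} and \ref{lemma of finite time discrete random periodic solution continuity wrt initial value}. Once these moment and contraction estimates are in place, the construction of the Cauchy sequence, the independence of the limit from the initial value, and the verification of random periodicity are identical to the Euler case and require no new idea.

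First I would prove the $L^2$-boundedness analogue of Lemma \ref{lemma of discrete random periodic solution bounded}: there is a constant $\widehat C>0$ independent of $k$ with $\E\abs{\hxit{M}}^2\leq\widehat C$ for the Milstein iterate (\ref{equation of scheme of discrete random periodic solution by steps for RK1}). As before I would expand $\rbrac{\at}^{-2M}\abs{\hxit{M}}^2$ telescopically as in (\ref{equation of discrete rds expansion}) and analyse the increment $\abs{\hxit{(i+1)}}^2/\rbrac{\at}^2-\abs{\hxit{i}}^2$. The new feature is the two Milstein correction terms carrying the factors $\dZ_i/(2\sqrt{\dt})$ and $((\dW_i)^2-\dt)/(4\sqrt{\dt})$. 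Using the boundedness of the first-order derivatives of $f$ and $g$ assumed in Condition ($1^\prime$), I would bound $\abs{f(i\dt,\huip{i})-f(i\dt,\huim{i})}$ and the corresponding $g$-difference by a constant times $\abs{\huip{i}-\huim{i}}=2\abs{g(i\dt,\hxit{i})}\sqrt{\dt}$, hence by a multiple of $\rbrac{\beta_2\abs{\hxit{i}}+C_2}\sqrt{\dt}$. Since $\dW_i$, $\dZ_i$ and $(\dW_i)^2-\dt$ are all $\mathcal{F}_{-k\tau+i\dt}^{-k\tau+(i+1)\dt}$-measurable and independent of the step-$i$ quantities, every cross term linear in these increments vanishes in expectation; and because $\E\abs{\dZ_i}^2$ is of order $\rbrac{\dt}^3$ while $\E\abs{(\dW_i)^2-\dt}^2$ is of order $\rbrac{\dt}^2$, the surviving contributions of the correction terms are of order $\rbrac{\dt}^2$ or higher and can be absorbed into an $\hk{3}$-type coefficient exactly as in (\ref{inequality of discrete approximation in lemma}). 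Discrete Gronwall then yields the bound, provided $\dt$ is small enough to preserve the strict inequality $\rbrac{\at}^2\rbrac{1+\hk{3}}<1$.

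Next I would prove the contraction analogue of Lemma \ref{lemma of discrete random periodic solution wrt different initial values} for two Milstein solutions with initial data $\xi$ and $\eta$. Here the Lipschitz constants $\beta_1,\beta_2$ from Condition ($1^\prime$) control the leading differences, while the derivative-difference terms again contribute only higher-order-in-$\dt$ quantities; the same smallness requirement on $\dt$ gives a geometric decay factor $\rbrac{\rbrac{\at}^2\rbrac{1+\hk{4}'}}^M<1$ for a Milstein analogue $\hk{4}'$ of $\hk{4}$, so that $\norm{\hxit{M}-\widehat{Y}_{-k\tau+M\dt}^{-k\tau}}_2<\varepsilon$ for large $M$. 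The finite-time continuity statement (analogue of Lemma \ref{lemma of finite time discrete random periodic solution continuity wrt initial value}) is obtained the same way on the bounded index range $0\leq i\leq N'$, where the extra Milstein terms cause no difficulty since no pull-back limit is taken and $N'$ is fixed.

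With these three lemmas the remainder is identical to the proof of Theorem \ref{theorem of discrete random periodic solution convergence}: I would use the semi-flow identity $\hx_0^{-(k+m)\tau}=\hx_0^{-N\dt}\circ\hx_{-N\dt}^{-(N+mn)\dt}$ together with the contraction lemma to show $\{\hx_0^{-k\tau}(\xi)\}_k$ is Cauchy in $L^2(\Omega)$, converging to some $\hx^*$ independently of $\xi$; set $\hx^*(r,\omega):=\hx(r,0,\omega)\circ\hx^*$ and invoke the finite-time continuity lemma to pass the convergence $\hx_r^{-k\tau}\to\hx^*(r,\cdot)$ through the finite-time map; and finally deduce $\hx^*(r+\tau,\omega)=\hx^*(r,\theta_\tau\omega)$ from the pathwise identity $\hx_{r+\tau}^{-k\tau+\tau}(\omega)=\hx_r^{-k\tau}(\theta_\tau\omega)$ exactly as in (\ref{equality in theorem 2.6}). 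I expect the main obstacle to be the boundedness lemma: the delicate point is verifying that all the new Milstein cross terms either vanish in expectation by the independence of the increments or are of sufficiently high order in $\dt$ to be absorbed without spoiling the strict inequality $\rbrac{\at}^2\rbrac{1+\hk{3}}<1$ that the discrete Gronwall step relies on.
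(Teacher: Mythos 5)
Your proposal is correct and follows exactly the route the paper intends: the paper itself omits the proof of this theorem, stating only that it proceeds ``by a similar argument as Theorem \ref{theorem of discrete random periodic solution convergence},'' with the accompanying remark confirming precisely your key points --- that the boundedness of the first-order partial derivatives of $f$ and $g$ in Condition ($1^\prime$) guarantees the analogues of the constants $\hk{1},\hk{2},\hk{3},\hk{4}$, and that the additional $\dZ_i$ and $(\dW_i)^2-\dt$ correction terms (vanishing in expectation when linear in the increments, and of order $\rbrac{\dt}^2$ or higher otherwise) do not affect the convergence. Your filled-in details of the three Milstein lemmas and the Cauchy-sequence, initial-value-independence, and periodicity steps are a faithful expansion of the argument the paper sketches.
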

\begin{proof}
The proof is by a similar argument as Theorem \ref{theorem of discrete random periodic solution convergence}. As it is tedious and there is no special difficulty, so omitted here.
\end{proof}
\begin{remark}
For the Milstein scheme, the existence of constants as $\hk{1},\hk{2},\hk{3},\hk{4}$ in the proof of Euler-Maruyama scheme are guaranteed by the boundedness of partial derivatives of functions $f$ and $g$. Then we still have the convergence for different initial values and the boundedness of the discrete processes. The addition term $\frac{\dZ_i}{2\sqrt{\dt}}[f\rbrac{i\dt,\huip{i})}-f\rbrac{i\dt,\huim{i}} ]$ in the scheme does not influence the result of the convergence. However, when we analyse the error between approximation and the exact solution of random periodic solutions, this term is necessary for infinite horizon case to satisfy the order of error.
\end{remark}

\section{The error estimate}
\label{zhao2}
\subsection{Euler-Maruyama method}
In the last two sections, we proved the existence of random periodic solutions of SDE (\ref{equations of original period SDE}) and its discretisations as the limits of semi-flows when the starting times were pushed to $-\infty$. The next step is to estimate the error between these two limits. Now we need to consider the difference between the discrete approximate solution and the exact solution. The exact solution at time $-k\tau+M\dt$ is as follows 
\begin{eqnarray}X_{-k\tau+M\Delta t}^{-k\tau}(\omega,\xi)=&e^{AM\dt}\xi+e^{A(M\dt-k\tau)}\int_{-k\tau}^{M\dt-k\tau}e^{-As}f(s,X_s^{-k\tau})ds\notag\\
&+e^{A(M\dt-k\tau)}\int_{-k\tau}^{M\dt-k\tau}e^{-As}g(s,X_s^{-k\tau})dW_{s}\label{equation of continuous random periodic solution into N dt}.
\end{eqnarray}
\begin{lemma}\label{lemma of continuous and discrete random periodic solution error estimation from -kt to 0}
Assume Conditions (A), (1) and (2). Choose $\dt=\tau/n$ for some $n\in \N$ and $N=kn$. Then there exists a constant $K>0$ such that for any sufficiently small fixed $\dt$ and $N'\in {\mathbb N}$, , we have 
$$\limsup_{k\rightarrow\infty}\norm{X_{N'\Delta t}^{-k\tau}-\hx_{N'\Delta t}^{-k\tau}}_2\leq K\sqrt{\dt},$$
where $X_{N'\Delta t}^{-k\tau}$ and $\hx_{N'\Delta t}^{-k\tau}$ are the exact and the numerical solutions given by (\ref{equation of continuous random periodic solution into N dt}) and (\ref{equation of scheme of discrete random periodic solution by sum}) respectively, $K$ is independent of $N'$ and $\Delta t$.
\end{lemma}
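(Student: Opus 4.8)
The plan is to compare the exact solution in its mild form (\ref{equation of continuous random periodic solution into N dt}) with the numerical solution in its summed form (\ref{equation of scheme of discrete random periodic solution by sum}), both issued from the common datum $\xi$ at time $-k\tau$, and to control the error uniformly in the number of steps. Write $M=kn+N'$, so that $-k\tau+M\dt=N'\dt$ and $M\to\infty$ as $k\to\infty$ for fixed $N'$, put $t_i:=-k\tau+i\dt$, and set $e_M:=X_{N'\Delta t}^{-k\tau}-\hx_{N'\Delta t}^{-k\tau}$. Subtracting the two representations I would decompose $e_M$ into three contributions: the initial-data mismatch $[e^{AM\dt}-(\At)^M]\xi$; a drift term comparing $\sum_i\int_{t_i}^{t_{i+1}}e^{A(N'\dt-s)}f(s,\xts)\,ds$ with $\dt\sum_i(\At)^{M-i-1}f(i\dt,\hxit{i})$; and a diffusion term comparing the corresponding stochastic integrals. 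Since $A$ is negative definite with $\abs{e^{At}}\le e^{-\abs{\lambda_1}t}$ and, for $\dt\le 1/\rho$, $\abs{\At}\le 1-\abs{\lambda_1}\dt\le e^{-\abs{\lambda_1}\dt}$, both $e^{AM\dt}\xi$ and $(\At)^M\xi$ tend to $0$ in $L^2$ as $M\to\infty$, using $\norm{\xi}_2\le K^*$ from Condition (2). This is precisely why the statement is phrased with $\limsup_{k\to\infty}$: the transient generated by the arbitrary starting value is forgotten, and only the accumulated one-step errors survive.

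For the drift and diffusion terms I would split each summand further into (a) a matrix-weight error replacing $e^{A(N'\dt-s)}$ by $(\At)^{M-i-1}$ on $[t_i,t_{i+1}]$, controlled by $\abs{e^{Aj\dt}-(\At)^j}\le Cj\dt^2 e^{-\abs{\lambda_1}(j-1)\dt}$ obtained by telescoping together with Lemma 1 of \cite{4}; (b) a time-regularity error replacing the exact integrands $f(s,\xts),g(s,\xts)$ by their values frozen at the left grid point, bounded through Condition (1) and the modulus-of-continuity estimate $\norm{\xts-X_{t_i}^{-k\tau}}_2\le C_3(s-t_i)+C_4\sqrt{s-t_i}\le C\sqrt{\dt}$ of Lemma \ref{lemma of order of dt between different times} (whose proof, based on (\ref{zhao12}), extends verbatim to grid points in $[-k\tau,0)$ with $k$-independent constants); and (c) the propagated error $f(i\dt,X_{t_i}^{-k\tau})-f(i\dt,\hxit{i})$, which by the Lipschitz Condition (1) is dominated by $\beta_1\abs{e_i}$ and feeds the Gronwall mechanism. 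For the diffusion sum the orthogonality of the It\^o increments over disjoint intervals lets me replace the square of the martingale sum by the sum of squares, so that after It\^o's isometry each block contributes $\int_{t_i}^{t_{i+1}}\E\abs{\cdots}^2\,ds$; the crucial quantitative point is that the freezing error here contributes variance of order $\dt$ per unit time, i.e. $O(\dt^2)$ per step, which is the source of the $\sqrt{\dt}$ rate, whereas the drift block (treated by Minkowski's inequality) yields only $O(\dt^3)$ per step. The linear-growth bounds feeding these estimates are legitimate because $\E\abs{\xts}^2$ and $\E\abs{\hxit{i}}^2$ are bounded uniformly in $k,i$ by Lemma \ref{lemma of continuous random periodic solution bounded} and Lemma \ref{lemma of discrete random periodic solution bounded}.

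Assembling these estimates and multiplying through by the discrete weight $(\at)^{-2M}$, exactly as in the proof of Lemma \ref{lemma of discrete random periodic solution bounded}, I would reach a recursive inequality of the form
$$(\at)^{-2M}\E\abs{e_M}^2\le o(1)+\sum_{i=0}^{M-1}(\at)^{-2i}R_i+\hk{5}\sum_{i=0}^{M-1}(\at)^{-2i}\E\abs{e_i}^2,$$
where $R_i=O(\dt^2)$ is the per-step truncation error and $o(1)$ collects the initial-data contribution that vanishes as $k\to\infty$. Applying the discrete Gronwall inequality and using the dissipativity gap $\beta_1+\tfrac{\beta_2^2}{2}<\alpha<\abs{\lambda_1}$, which guarantees $(\at)^2(1+\hk{5})<1$ so that the weighted geometric series converges with sum bounded independently of $M$ and $N'$, and finally un-weighting via $1-(\at)^2=2\alpha\dt-\alpha^2\dt^2=O(\dt)$, yields $\E\abs{e_M}^2\le K^2\dt+o(1)$ with $K$ independent of $N'$ and $\dt$. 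Taking $\limsup_{k\to\infty}$ removes the $o(1)$ and gives the claim.

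The main obstacle is the infinite-horizon uniformity: a finite-horizon Euler analysis produces an error constant growing like $e^{CT}$, whereas here $T=k\tau\to\infty$, so the proof stands or falls on the cancellation between the geometric decay of the discrete propagator $(\At)^{M-i-1}$ and the accumulation of the $M$ local errors. Making this precise requires the sharp matrix estimate in (a), with its decaying prefactor $e^{-\abs{\lambda_1}(j-1)\dt}$, together with the spectral-gap choice of $\alpha$, so that all the weighted sums $\sum_i(\at)^{-2i}\abs{\cdots}$ collapse to $M$-independent geometric series; keeping every constant uniform in $k$ (hence in $N'$) throughout is the delicate bookkeeping at the heart of the argument.
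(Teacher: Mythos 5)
Your proposal is correct and runs on the same engine as the paper's proof: the weighted telescoping of $(\at)^{-2M}\E\abs{X^{-k\tau}_{-k\tau+M\dt}-\hxit{M}}^2$, local errors of size $O((\dt)^3)$ per step for the drift and $O((\dt)^2)$ per step for the diffusion (obtained from Condition (1), the modulus-of-continuity Lemma \ref{lemma of order of dt between different times} and the uniform second-moment bounds of Lemmas \ref{lemma of continuous random periodic solution bounded} and \ref{lemma of discrete random periodic solution bounded}), a discrete Gronwall step whose weighted geometric series converges uniformly in $M$ thanks to the gap $\beta_1+\frac{\beta_2^2}{2}<\alpha<\abs{\lambda_1}$, and finally $M=N+N'$. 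Where you diverge is in the bookkeeping of the linear part: you split the mild-form difference globally, producing an explicit initial-data mismatch $[e^{AM\dt}-(\At)^M]\xi$ (discarded as $o(1)$) and the telescoped matrix bound $\abs{e^{Aj\dt}-(\At)^j}\le Cj(\dt)^2e^{-\abs{\lambda_1}(j-1)\dt}$ in every summand, whereas the paper stays strictly one-step: it writes $e_{i+1}=e^{A\dt}X^{-k\tau}_{-k\tau+i\dt}-(\At)\hxit{i}+(\at)(B_1+B_2)$, absorbs the difference of the propagated errors using non-positive definiteness of $\bigl(\frac{e^{A\dt}}{\at}-I\bigr)\bigl(\frac{e^{A\dt}}{\at}+I\bigr)$, and handles the matrix discrepancy only through $e^{A\dt}-I-A\dt=O((\dt)^2)$ acting on the uniformly bounded iterate. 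A consequence worth noting: because both trajectories start from the same $\xi$, the paper's telescoped recursion starts from zero, so its bound $\norm{X^{-k\tau}_{-k\tau+M\dt}-\hxit{M}}_2^2\le\hk{21}\dt$ is uniform in $k$ with no transient at all --- your reading that the $\limsup$ is forced by a decaying initial term is not how the paper uses it (the $\limsup$ there is just the final packaging), though your route is also valid and your initial term is indeed $o(1)$ (in fact $O(\dt)$ uniformly). Two small points to tighten: the inequality $\abs{\At}\le 1-\abs{\lambda_1}\dt$ holds for the spectral norm of the symmetric matrix, not for the Frobenius norm fixed in Section 2 (harmless, since only the eigenvalue bound is used); and your ``assembling'' step silently contains the drift--diffusion cross term, which is not killed by martingale orthogonality --- the paper bounds its analogue $2B_1^TB_2$ by Cauchy--Schwarz, yielding the admissible $(\dt)^{5/2}$ and $(\dt)^{3/2}\norm{e_i}_2^2$ contributions, and eliminates the other mixed term by conditioning on $\mathcal{F}^{i\dt-k\tau}$; you should make these explicit.
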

\begin{proof}
In the following proof, we always denote by $\hat K_\cdot$ the constant derived from the unlderlining computation unless otherwise stated.
For any $M\in {\mathbb N}$, we have
\begin{align*}
&X_{-k\tau+M\Delta t}^{-k\tau}-\hxit{M}\\
=&\rbrac{e^{AM\dt}-\rbrac{\At}^M}\xi+e^{A(M\dt-k\tau)}\int_{-k\tau}^{M\dt-k\tau}e^{-As}f(s,X_s^{-k\tau})ds\\
&-\sum_{i=0}^{M-1}\rbrac{\At}^{M-i-1}f(i\dt,\hxit{i})\dt\\
&
+e^{A(M\dt-k\tau)}\int_{-k\tau}^{M\dt-k\tau}e^{-As}g(s,X_s^{-k\tau})dW_{s}\\
& -\sum_{i=0}^{M-1}\rbrac{\At}^{M-i-1}g(i\dt,\hxit{i})\rbrac{W_{-k\tau+(i+1)\dt}-W_{-k\tau+i\dt}}.
\end{align*}
Similar to the method of Lemma \ref{lemma of discrete random periodic solution bounded}, firstly consider
\begin{eqnarray}
&&\hspace{0.7cm}\rbrac{\at}^{-2M}\abs{X_{-k\tau+M\Delta t}^{-k\tau}-\hxit{M}}^2\label{equation of discrete rds expansion in chapter 4}\\
&=&\sum_{i=0}^{M-1}\rbrac{\at}^{-2i}\rbrac{\frac{\abs{X_{-k\tau+(i+1)\Delta t}^{-k\tau}-\hxit{(i+1)}}^2}{\rbrac{\at}^2}-\abs{X_{-k\tau+i\Delta t}^{-k\tau}-\hxit{i}}^2}.\notag
\end{eqnarray}
For simplicity we denote
\begin{align*}
B_1=&\frac{1}{\at}\int_{i\dt-k\tau}^{(i+1)\dt-k\tau}\rbrac{e^{-A\rbrac{s+k\tau-(i+1)\dt}}f(s,X_s^{-k\tau})-f(i\dt,\hxit{i})}ds,\\
B_2=&\frac{1}{\at}\int_{i\dt-k\tau}^{(i+1)\dt-k\tau}\rbrac{e^{-A\rbrac{s+k\tau-(i+1)\dt}}g(s,X_{s}^{-k\tau})-g(i\dt,\hxit{i})}dW_s.
\end{align*}
Therefore,
\begin{eqnarray*}
&&X_{-k\tau+(i+1)\Delta t}^{-k\tau}-\hxit{(i+1)}\\
&=&e^{A\dt}X_{-k\tau+i\Delta t}^{-k\tau}-\rbrac{\At}\hxit{i}+\rbrac{\at}\rbrac{B_1+B_2}.
\end{eqnarray*}
Now we consider
\begin{eqnarray}
&&\hskip 1cm\frac{\abs{X_{-k\tau+(i+1)\Delta t}^{-k\tau}-\hxit{(i+1)}}^2}{\rbrac{\at}^2}-\abs{X_{-k\tau+i\Delta t}^{-k\tau}-\hxit{i}}^2\label{equation of expansion of sum part}\\
&=&\rbrac{X_{-k\tau+i\Delta t}^{-k\tau}-\hxit{i}}^T  \rbrac{\frac{e^{A\dt}}{\at}-I}  \rbrac{\frac{e^{A\dt}}{\at}+I} \notag\\
&& \hskip3cm \times  \rbrac{X_{-k\tau+i\Delta t}^{-k\tau}-\hxit{i}}\notag\\
&&+\rbrac{\hxit{i}}^T  \rbrac{\frac{e^{A\dt}-I-A\dt}{\at}}^2  \rbrac{\hxit{i}}  +B_1^TB_1+B_2^TB_2\notag\\
&&+2\rbrac{X_{-k\tau+i\Delta t}^{-k\tau}-\hxit{i}}^T  \rbrac{\frac{e^{A\dt}}{\at}}  \rbrac{\frac{e^{A\dt}-I-A\dt}{\at}}   \rbrac{\hxit{i}}\notag\\
&&+2\rbrac{\rbrac{X_{-k\tau+i\Delta t}^{-k\tau}}^T  \rbrac{\frac{e^{A\dt}}{\at}}  -  \rbrac{\hxit{i}}^T  \rbrac{\frac{\At}{\at}}}  B_1\notag\\
&&+2\rbrac{\rbrac{X_{-k\tau+i\Delta t}^{-k\tau}}^T  \rbrac{\frac{e^{A\dt}}{\at}}  -  \rbrac{\hxit{i}}^T  \rbrac{\frac{\At}{\at}}}  B_2  +2B_1^TB_2\notag.
\end{eqnarray}We note that the matrix $\rbrac{\frac{e^{A\dt}}{\at}-I}  \rbrac{\frac{e^{A\dt}}{\at}+I}$ can be non-positive-definite when we choose the $\dt$ small enough. Now we consider each term in (\ref{equation of expansion of sum part}). First,
\begin{align*}
&\E\sbrac{\rbrac{\hxit{i}}^T  \rbrac{\frac{e^{A\dt}-I-A\dt}{\at}}^2  \hxit{i}}\\
\leq&\norm{\hxit{i}}_2  \abs{\frac{\frac{1}{2}A^2\rbrac{\dt}^2}{\at}}^2  \norm{\hxit{i}}_2
\leq \hk{5}(\dt)^4.
\end{align*}
Next,
\begin{eqnarray}
&&\hskip 1cm\E\sbrac{B_1^TB_1}=\E\abs{B_1}^2\label{equation of random periodic solution f part}\\
&\leq&  \frac{2(1+\mu)}{\mu\rbrac{\at}^2}\rbrac{\int_{i\dt-k\tau}^{\rbrac{i+1}\dt-k\tau}  \abs{e^{-A\rbrac{s+k\tau-(i+1)\dt}}-I}  \norm{f(s,X_s^{-k\tau})  }_2  ds}^2\notag\\
&&+  \frac{2(1+\mu)}{\mu\rbrac{\at}^2}\rbrac{\int_{i\dt-k\tau}^{\rbrac{i+1}\dt-k\tau}  \norm{f(s,X_s^{-k\tau})  -f(i\dt,X_{-k\tau+i\Delta t}^{-k\tau})  }_2  ds}^2\notag\\
&&+  \frac{1+\mu}{\rbrac{\at}^2}\rbrac{\int_{i\dt-k\tau}^{\rbrac{i+1}\dt-k\tau}  \norm{f(i\dt,X_{-k\tau+i\Delta t}^{-k\tau})  -f(i\dt,\hxit{i})  }_2  ds}^2\notag,
\end{eqnarray}where $\mu$ is a small number from Young's inequality, which will be fixed later.
By linear growth property of $f$ and Lemma \ref{lemma of continuous random periodic solution bounded}, we know that $\norm{f(s,X_s^{-k\tau})  }_2$ is bounded. So for the first term in (\ref{equation of random periodic solution f part}) we only need to estimate
\begin{align*}
&\int_{i\dt-k\tau}^{(i+1)\dt-k\tau}  \abs{e^{-A\rbrac{s+k\tau-(i+1)\dt}}-I}  ds
\leq\frac{(\dt)^2}{2} Tr\rbrac{  -A  }.
\end{align*}
By Condition (1) and Lemma \ref{lemma of order of dt between different times}, the second term in (\ref{equation of random periodic solution f part}) becomes
\begin{align*}
&\int_{i\dt-k\tau}^{(i+1)\dt-k\tau}  \norm{  f(s,X_s^{-k\tau}) - f(i\dt,X_{-k\tau+i\Delta t}^{-k\tau})}_2  ds\\
\leq&\int_{i\dt-k\tau}^{(i+1)\dt-k\tau}  (\norm{f(s,X_s^{-k\tau}) - f(i\dt,X_s^{-k\tau})}_2 
\\
&
\ \ \ \ \ \ \ \ \ \ \ \ \ \ \ \ \ \ \ \ \ \ \ +\norm{f(i\dt,X_s^{-k\tau}) - f(i\dt,X_{-k\tau+i\Delta t}^{-k\tau})}_2 )ds\\
\leq&\int_{i\dt-k\tau}^{(i+1)\dt-k\tau} C_0\abs{s-i\dt+k\tau}^{1/2} ds + \int_{i\dt-k\tau}^{(i+1)\dt-k\tau} \beta_1 \norm{X_s^{-k\tau}-X_{-k\tau+i\Delta t}^{-k\tau}}_2 ds\\
\leq&\int_{i\dt-k\tau}^{(i+1)\dt-k\tau} (C_0+\beta_1 C_4) \sqrt{s-i\dt+k\tau}ds\\
\leq & 
\hk{6} \rbrac{\dt}^\frac{3}{2}.
\end{align*}
Applying the global Lipschitz condition, the third term of (\ref{equation of random periodic solution f part}) becomes 
\begin{align*}
&\int_{i\dt-k\tau}^{(i+1)\dt-k\tau}  \norm{f(i\dt,X_{-k\tau+i\Delta t}^{-k\tau})  -f(i\dt,\hxit{i})}_2  ds\\
\leq & \beta_1\dt\norm{X_{-k\tau+i\Delta t}^{-k\tau}-\hxit{i}}_2.
\end{align*}
We summarise the above inequalities to have
\begin{align}\label{eqn4.5}
\E\sbrac{B_1^TB_1}\leq \hk{7}\rbrac{\dt}^3  +\frac{(1+\mu)\beta_1^2\rbrac{\dt}^2}{\rbrac{\at}^2}    \norm{X_{-k\tau+i\Delta t}^{-k\tau}-\hxit{i}}_2^2 .
\end{align}
This term is of the 3rd order of $\dt$ and 2nd order of $\dt$ with $ \norm{X_{-k\tau+i\Delta t}^{-k\tau}-\hxit{i}}_2^2$.\\
Similar to the $\E\sbrac{B_1^TB_1}$, the following term can be estimated as
\begin{eqnarray}
&&\E\sbrac{B_2^TB_2}=\E\abs{B_2}^2\notag\\
&\leq&  \frac{2(1+\mu)}{\mu\rbrac{\at}^2}  \int_{i\dt-k\tau}^{\rbrac{i+1}\dt-k\tau}     \abs{ e^{-A\rbrac{s+k\tau-(i+1)\dt}} -I  }^2 \norm{  g(s,X_s^{-k\tau}) }^2_2     ds \notag\\
&&+  \frac{2(1+\mu)}{\mu\rbrac{\at}^2}  \int_{i\dt-k\tau}^{\rbrac{i+1}\dt-k\tau}  \norm{  g(s,X_{s}^{-k\tau}) -g(i\dt,X_{-k\tau+i\Delta t}^{-k\tau})  }^2_2  ds\notag\\
&&+  \frac{1+\mu}{\rbrac{\at}^2}  \int_{i\dt-k\tau}^{\rbrac{i+1}\dt-k\tau}  \norm{  g(i\dt,X_{-k\tau+i\Delta t}^{-k\tau})  -g(i\dt,\hxit{i})   }^2_2  ds \label{equation of random periodic solution g part},
\end{eqnarray}
where $\mu$ is a small number from Young's inequality, which will be fixed later. By the linear growth property of $g$ and Lemma \ref{lemma of continuous random periodic solution bounded}, we know that $\norm{g(s,X_s^{-k\tau})  }_2^2$ is bounded. So we only need to estimate
\begin{align*}
&\int_{i\dt-k\tau}^{(i+1)\dt-k\tau}  \abs{e^{-A\rbrac{s+k\tau-(i+1)\dt}}  -I}^2  ds
\leq\frac{2}{3}\rbrac{\dt}^3Tr\rbrac{A^2}.
\end{align*}
By Condition (1) and Lemma \ref{lemma of order of dt between different times}, the second term in (\ref{equation of random periodic solution g part}) becomes
\begin{align*}
&\int_{i\dt-k\tau}^{(i+1)\dt-k\tau}  \norm{  g(s,X_s^{-k\tau}) - g(i\dt,X_{-k\tau+i\Delta t}^{-k\tau})}_2^2  ds\\
\leq&\int_{i\dt-k\tau}^{(i+1)\dt-k\tau} 2(C_0^2+\beta_2^2 C_4^2) \abs{s-i\dt+k\tau}ds
\leq\hk{8} \rbrac{\dt}^2.
\end{align*}
The third term follows from the global Lipschitz condition
\begin{align*}
&
\int_{i\dt-k\tau}^{\rbrac{i+1}\dt-k\tau}  \norm{  g(i\dt,X_{-k\tau+i\Delta t}^{-k\tau})  -g(i\dt,\hxit{i})   }^2_2  ds\\
\leq &
 \beta_2^2\dt  \norm{X_{-k\tau+i\Delta t}^{-k\tau}-\hxit{i}}_2^2.
\end{align*} 
Conclude the above results to obtain 
\begin{align}\label{eqn4.7}
\E\sbrac{B_2^TB_2}\leq  \hk{9}\rbrac{\dt}^2  +\frac{(1+\mu)\beta_2^2\dt}{\rbrac{\at}^2}  \norm{X_{-k\tau+i\Delta t}^{-k\tau}  -\hxit{i}}_2^2  .
\end{align}
The fifth term of (\ref{equation of expansion of sum part}) can be estimate as follows 
\begin{align*}
&\E\sbrac{  2\rbrac{X_{-k\tau+i\Delta t}^{-k\tau}-\hxit{i}}^T  \rbrac{\frac{e^{A\dt}}{\at}}  \rbrac{\frac{e^{A\dt}-I-A\dt}{\at}}   \rbrac{\hxit{i}}  }\\
\leq& 2\norm{X_{-k\tau+i\Delta t}^{-k\tau}-\hxit{i}}_2 \frac{\frac{1}{2}\abs{A^2}(\dt)^2}{(\at)^2} \norm{\hxit{i}}_2\\
\leq&\hk{10}  \rbrac{\dt}^2  \norm{X_{-k\tau+i\Delta t}^{-k\tau}-\hxit{i}}_2.
\end{align*}
To estimate the sixth term of (\ref{equation of expansion of sum part}),
\begin{align}\label{eqn4.8}
&\E\sbrac{2\rbrac{\rbrac{X_{-k\tau+i\Delta t}^{-k\tau}}^T  \rbrac{\frac{e^{A\dt}}{\at}}  -  \rbrac{\hxit{i}}^T  \rbrac{\frac{\At}{\at}}}  B_1}\\
=&\E\sbrac{2\rbrac{X_{-k\tau+i\Delta t}^{-k\tau}}^T  \rbrac{\frac{e^{A\dt}}{\at}  -\frac{\At}{\at}}    B_1}\nonumber  \\
&+\E\sbrac{2\rbrac{X_{-k\tau+i\Delta t}^{-k\tau}  -\hxit{i}}^T  \rbrac{\frac{\At}{\at}}  B_1}.\nonumber
\end{align}
Now we discuss these two terms separately,
\begin{align*}
&\E\sbrac{2\rbrac{X_{-k\tau+i\Delta t}^{-k\tau}}^T  \rbrac{\frac{e^{A\dt}}{\at}  -\frac{\At}{\at}}    B_1}
\leq  2\norm{X_{-k\tau+i\Delta t}^{-k\tau}}_2  \frac{\abs{  \frac{1}{2}  A^2\rbrac{\dt}^2}}  {\at}  \norm{B_1}_2\\
&
\leq  \hk{12}(\dt)^{7/2}  +\frac{ \sqrt{1+\mu} \beta_1\hk{11}  (\dt)^3}{(\at)^2}  \norm{  X_{-k\tau+i\Delta t}^{-k\tau}  -\hxit{i}  }_2.
\end{align*}
And,
\begin{align}
&\E\sbrac{2\rbrac{X_{-k\tau+i\Delta t}^{-k\tau}  -\hxit{i}}^T  \rbrac{\frac{\At}{\at}}  B_1}\notag\\
\leq&  \frac{2\sqrt{\hk{7}}(\dt)^{3/2}}{\at}  \norm{X_{-k\tau+i\Delta t}^{-k\tau}  -\hxit{i}}_2  \rbrac{1+\dt\abs{A}}  \label{equation of difference 1}\\
&+  \frac{2\sqrt{1+\mu}\beta_1\dt}{(\at)^2}  \norm{X_{-k\tau+i\Delta t}^{-k\tau}  -\hxit{i}}_2^2  \rbrac{1+\dt\abs{A}}.\notag
\end{align}
We use the conditional expectation to eliminate the seventh term
\begin{align*}
&\E\sbrac{ \rbrac{\rbrac{X_{-k\tau+i\Delta t}^{-k\tau}}^T  \rbrac{\frac{e^{A\dt}}{\at}}  -  \rbrac{\hxit{i}}^T  \rbrac{\frac{\At}{\at}}}  B_2   }\\
=&\E\sbrac{\rbrac{\rbrac{X_{-k\tau+i\Delta t}^{-k\tau}}^T  \rbrac{\frac{e^{A\dt}}{\at}}  -  \rbrac{\hxit{i}}^T  \rbrac{\frac{\At}{\at}}}  \E\sbrac{ B_2 |\mathcal{F}^{i\dt-k\tau}  } }\\
=&0.
\end{align*}
For the last term,
\begin{eqnarray*}
\E\sbrac{2B_1^T B_2}&\leq&2\norm{B_1^T}_2\cdot\norm{B_2}_2\\
&\leq&  \hk{13}(\dt)^{5/2}  +\hk{14}(\dt)^{3/2}  \norm{X_{-k\tau+i\Delta t}^{-k\tau}  -\hxit{i}}_2^2.
\end{eqnarray*}
Combining all the estimation above, we have 
\begin{align*}
&\frac{\abs{X_{-k\tau+(i+1)\Delta t}^{-k\tau}-\hxit{(i+1)}}^2}{\rbrac{\at}^2}-\abs{X_{-k\tau+i\Delta t}^{-k\tau}-\hxit{i}}^2\\
\leq&\rbrac{\frac{(1+\mu)\beta_2^2\dt}{\rbrac{\at}^2}  + \frac{2\sqrt{(1+\mu)}\beta_1\dt}{(\at)^2} + \hk{16}(\dt)^{3/2}}\norm{X_{-k\tau+i\Delta t}^{-k\tau}  -\hxit{i}}_2^2\\
&   +\hk{15}\rbrac{\dt}^2 + \rbrac{ \frac{2\sqrt{\hk{7}}(\dt)^{3/2}}{\at}  + \hk{17}(\dt)^2}  \norm{X_{-k\tau+i\Delta t}^{-k\tau}  -\hxit{i}}_2.
\end{align*}
Now we notice that the term $\norm{X_{-k\tau+i\Delta t}^{-k\tau}  -\hxit{i}}_2^2$ has coefficients, the largest of which contains a constant multiplied by $\dt$. The largest free term contains a constant multiplied by $(\dt)^2$. Choosing $\mu$ and $\dt$ small enough and applying Young's inequality for the term $(\dt)^{3/2}\norm{X_{-k\tau+i\Delta t}^{-k\tau}  -\hxit{i}}_2$, and from (\ref{equation of discrete rds expansion in chapter 4}) we get
\begin{eqnarray}
&&\hskip 1cm\rbrac{\at}^{-2M}  \norm{X_{-k\tau+M\Delta t}^{-k\tau}  -\hxit{M}}_2^2\label{inequality of error between exact and numerical}\\
&\leq&  \sum_{i=0}^{M-1}  \rbrac{\at}^{-2i}  \rbrac{  \hk{20}\dt    \norm{X_{-k\tau+i\Delta t}^{-k\tau}  -\hxit{i}}_2^2  +\hk{18}(\dt)^2}\notag\\
&\leq&  \hk{19}(\dt)(\at)^{-2M}  +\hk{20}(\dt)  \sum_{i=0}^{M-1}  \rbrac{\at}^{-2i}  \norm{X_{-k\tau+i\Delta t}^{-k\tau}  -\hxit{i}}_2^2,\notag
\end{eqnarray}
where
\begin{align*}
\hk{19} = \frac{\hk{18}(\at)^2  }  {2\alpha\dt-\alpha^2\rbrac{\dt}^2}(\dt)=\frac{\hk{18}(\at)^2}  {2\alpha-\alpha^2\rbrac{\dt}},\ 
\hk{20} = \frac{(1+\mu)(2\beta_1+\beta_2^2+\varepsilon)}{(\at)^2}.
\end{align*}
Here $\mu$, $\varepsilon$ and the time step $\dt$ are chosen small enough such that
$\rbrac{\hk{20}\dt+1}(\at)^2<1.$
Now using the discrete time Gronwall inequality, from (\ref{inequality of error between exact and numerical}), we have
\begin{align*}
&\norm{X_{-k\tau+M\Delta t}^{-k\tau}  -\hxit{M}}_2^2 \\
 \leq&  \hk{19}\dt  +\hk{19}  \hk{20}(\dt)^2  \frac{1-\rbrac{\rbrac{1+\hk{20}\dt}\rbrac{\at}^2}^M}{1-\rbrac{1+\hk{20}\dt}\rbrac{\at}^2} 
\leq \hk{21}\dt.
\end{align*}
We can find a constant $\hk{21}$ which is independent of $M$ and $\dt$. Finally we take $M=N+N'$, where $N\dt=k\tau$, $N'\in {\mathbb Z}$, then
\begin{eqnarray*}
\limsup_{k\rightarrow\infty}\norm{X_{N'\Delta t}^{-k\tau}-\hx_{N'\Delta t}^{-k\tau}}_2&=&  \limsup_{N\rightarrow\infty}\norm{X_{-k\tau+(N+N')\Delta t}^{-k\tau}-\hx_{-k\tau+(N+N')\Delta t}^{-k\tau}}_2 \\
&\leq& \sqrt{\hk{21}}\sqrt{\dt}.
\end{eqnarray*}
So we get the result.
\end{proof}

We have proved the estimation of error from $-k\tau$ to $N'\Delta t$ as $k\rightarrow\infty$ can be controlled under the $1/2$ order of the time-step. And the upper bound is uniform in time. The following theorem will give us a more general result, which is from $-k\tau$ to time $r$. Let $\widehat X_r^{-k\tau}$, $r>0$ be given by (\ref{eqn3.6}).

\begin{theorem}\label{theorem of error estimation from -kt to r}
Assume Conditions (A), (1) and (2). We choose $\dt=\tau/n$ for some $n\in \N$, $N=kn$. For any $r\geq 0$, there exists a constant $\widetilde{K}>0$ such that for any sufficiently small fixed $\dt$,
$$\limsup_{k\rightarrow\infty}\norm{X_r^{-k\tau}-\hx_r^{-k\tau}}_2\leq \widetilde{K}\sqrt{\Delta{t}},$$
where $X_r^{-k\tau}$ is the exact solution while $\hx_r^{-k\tau}$ is the numerical solution and $\widetilde{K}$ is independent of $\dt$ and $r$.
\end{theorem}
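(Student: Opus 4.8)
The plan is to reduce the estimate at an arbitrary time $r$ to the already-established estimate at the grid point immediately below $r$, using a triangle inequality together with two short estimates for the ``fractional step'' that bridges that grid point and $r$. Fix $r\geq 0$ and let $N'$ be the integer determined in \eqref{eqn3.6}, so that $N'\dt\leq r<(N'+1)\dt$ and hence $0\leq r-N'\dt<\dt$. Splitting $X_r^{-k\tau}-\hx_r^{-k\tau}$ into three differences, I would write
\begin{align*}
\norm{X_r^{-k\tau}-\hx_r^{-k\tau}}_2\leq&\ \norm{X_r^{-k\tau}-X_{N'\dt}^{-k\tau}}_2+\norm{X_{N'\dt}^{-k\tau}-\hx_{N'\dt}^{-k\tau}}_2\\
&+\norm{\hx_{N'\dt}^{-k\tau}-\hx_r^{-k\tau}}_2,
\end{align*}
and treat the three terms separately. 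The middle term is precisely the quantity controlled in Lemma~\ref{lemma of continuous and discrete random periodic solution error estimation from -kt to 0}, whose $\limsup_{k\to\infty}$ is at most $K\sqrt{\dt}$ with $K$ independent of the grid index and of $\dt$.

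For the first term I would apply Lemma~\ref{lemma of order of dt between different times} with $t_1=r$ and $t_2=N'\dt$ (both nonnegative, $t_1\geq t_2$), giving $\norm{X_r^{-k\tau}-X_{N'\dt}^{-k\tau}}_2\leq C_3(r-N'\dt)+C_4\sqrt{r-N'\dt}\leq C_3\dt+C_4\sqrt{\dt}$; since $\dt\leq 1/\rho$ is fixed and small this is $O(\sqrt{\dt})$, uniformly in $k$ and in $r$. For the third term I would use the explicit fractional-step formula \eqref{equation of rest part from 0 to r}, which yields
\[
\hx_r^{-k\tau}-\hx_{N'\dt}^{-k\tau}=f(N'\dt,\hx_{N'\dt}^{-k\tau})(r-N'\dt)+g(N'\dt,\hx_{N'\dt}^{-k\tau})\bigl(W_r-W_{N'\dt}\bigr).
\]
The drift contribution is bounded by $\rbrac{\beta_1\norm{\hx_{N'\dt}^{-k\tau}}_2+C_1}(r-N'\dt)$, and since Lemma~\ref{lemma of discrete random periodic solution bounded} gives $\norm{\hx_{N'\dt}^{-k\tau}}_2\leq\sqrt{\widehat C}$ uniformly in $k$ and $N'$, this is $O(\dt)$. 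For the diffusion contribution, the coefficient $g(N'\dt,\hx_{N'\dt}^{-k\tau})$ is $\mathcal{F}^{N'\dt-k\tau}$-measurable while $W_r-W_{N'\dt}$ is independent of it with zero mean, so by conditioning (an It\^o-isometry type identity) $\E\abs{g(N'\dt,\hx_{N'\dt}^{-k\tau})(W_r-W_{N'\dt})}^2=\E\abs{g(N'\dt,\hx_{N'\dt}^{-k\tau})}^2(r-N'\dt)$, which is again $O(\dt)$ by the linear growth of $g$ and Lemma~\ref{lemma of discrete random periodic solution bounded}; hence this term is $O(\sqrt{\dt})$, uniformly in $k$ and $r$.

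Finally I would combine the three estimates and take $\limsup_{k\to\infty}$, using subadditivity of $\limsup$ together with the fact that the first and third bounds hold for every $k$ (not merely in the limit), to obtain $\limsup_{k\to\infty}\norm{X_r^{-k\tau}-\hx_r^{-k\tau}}_2\leq\widetilde K\sqrt{\dt}$ for a constant $\widetilde K$ built from $K,C_3,C_4,\widehat C,\beta_1,\beta_2,C_1,C_2$ and $\rho$ alone, hence independent of $r$ and $\dt$. The only point requiring care---rather than a genuine obstacle---is the bookkeeping of uniformity of all constants in both $k$ and $r$: the middle term supplies the $\limsup$ in $k$, while the fractional-step terms must be bounded by constants that do not grow with $r$, which is exactly what the $k$- and $N'$-independence of the bounds in Lemmas~\ref{lemma of order of dt between different times} and \ref{lemma of discrete random periodic solution bounded} guarantees. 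The diffusion term is the one place where the argument genuinely needs the martingale/independence structure in order to produce $\sqrt{r-N'\dt}=O(\sqrt{\dt})$ rather than a worse power of $\dt$.
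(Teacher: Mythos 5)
Your proof is correct, but it takes a genuinely different route from the paper's. The paper decomposes via the semi-flow at the grid point $N'\dt$, writing $X_r^{-k\tau}-\hx_r^{-k\tau}=X_{r}^{N'\Delta t}\circ X_{N'\Delta t}^{-k\tau}-\hx_{r}^{N'\Delta t}\circ \hx_{N'\Delta t}^{-k\tau}$ and splitting into only two terms: the first is handled by Lemma \ref{lemma of continuous and discrete random periodic solution error estimation from -kt to 0} combined with the $L^2$-continuity of the exact flow $X_r^{N'\Delta t}(\cdot)$ with respect to initial values (citing Kunita \cite{8}), and the second, a finite-time Euler error with common initial value $\hx_{N'\Delta t}^{-k\tau}$, by the classical strong convergence result (Theorem 10.2.2 of Kloeden--Platen \cite{5}). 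You instead make a three-term split through $X_{N'\dt}^{-k\tau}$ and $\hx_{N'\dt}^{-k\tau}$ and estimate the two bridging terms by hand: Lemma \ref{lemma of order of dt between different times} gives the $L^2$-modulus of continuity of the exact solution, and the explicit fractional-step formula \eqref{equation of rest part from 0 to r} together with the uniform moment bound of Lemma \ref{lemma of discrete random periodic solution bounded} controls the scheme. This works because the target order is only $\sqrt{\dt}$, so it suffices that each bridging term be individually $O(\sqrt{\dt})$ uniformly in $k$ and $r$, and your independence/It\^o-isometry argument for the diffusion increment is indeed the step that produces $\sqrt{r-N'\dt}$ rather than a worse power. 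What your route buys is self-containedness: $\widetilde K$ is assembled entirely from constants already established in the paper, with no appeal to Kunita's flow continuity (whose constant one would otherwise need to check is uniform) or to an external finite-time theorem. What the paper's route buys is brevity and robustness: the two-term semi-flow decomposition would work verbatim even if the final segment consisted of many steps, whereas your hand computation exploits that $r-N'\dt<\dt$ is a single fractional step. Two cosmetic slips, neither a gap: the coefficient $g(N'\dt,\hx_{N'\dt}^{-k\tau})$ is $\mathcal{F}^{N'\dt}$-measurable, not $\mathcal{F}^{N'\dt-k\tau}$-measurable as written (the scheme at absolute time $N'\dt$ depends on the Brownian path up to $N'\dt$); and your fractional-step formula inherits the omission of the linear term $A\hx_{N'\dt}^{-k\tau}(r-N'\dt)$ from \eqref{equation of rest part from 0 to r} --- including it would only add another $O(\dt)$ contribution, bounded by the same moment lemma, so the conclusion is unaffected.
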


\begin{proof} Assume for any $r\geq 0$,
 $N'$ is the unique integer such that $N'\dt\leq r$, $(N'+1)\dt>r$.
According to the semi-flow property, we have,
\begin{align*}
X_r^{-k\tau}(\omega)-\hx_r^{-k\tau}(\omega)=&X_{r}^{N'\Delta t}(\omega)\circ X_{N'\Delta t}^{-k\tau}(\omega)-\hx_{r}^{N'\Delta t}(\omega)\circ \hx_{N'\Delta t}^{-k\tau}(\omega),
\end{align*}
where $\hx_{r}^{N'\Delta t}$ is finite time Euler approximation of solution of (\ref{equations of original period SDE}) from $N'\Delta t$ to $r$ and $\hx_{N'\Delta t}^{-k\tau}$ is defined as before. So,
\begin{eqnarray}
&&\norm{X_r^{-k\tau}-\hx_r^{-k\tau}}_2\label{inequality of differences between continuous and discrete rds}\\
&\leq& \norm{X_{r}^{N'\Delta t}\circ X_{N'\Delta t}^{-k\tau}-X_{r}^{N'\Delta t}\circ \hx_{N'\Delta t}^{-k\tau}}_2
+\norm{X_{r}^{N'\Delta t}\circ \hx_{N'\Delta t}^{-k\tau}-\hx_{r}^{N'\Delta t}\circ \hx_{N'\Delta t}^{-k\tau}}_2.\nonumber
\end{eqnarray}
For the first term on the right-hand side, by Lemma \ref{lemma of continuous and discrete random periodic solution error estimation from -kt to 0}, we have
$\norm{X_{N'\Delta t}^{-k\tau}-\hx_{N'\Delta t}^{-k\tau}}\leq K \sqrt{\dt}.$
By the continuity of $X_r^{N'\Delta t}(\cdot)$ with respect to initial values in $L^2(\Omega)$ (\cite{8}), then
\begin{align*}
\norm{X_{r}^{N'\Delta t}\circ X_{N'\Delta t}^{-k\tau}-X_{r}^{N'\Delta t}\circ \hx_{N'\Delta t}^{-k\tau}}_2
\leq C\norm{X_{N'\Delta t}^{-k\tau}-\hx_{N'\Delta t}^{-k\tau}}_2
\leq C_5\sqrt{\dt},
\end{align*}
where $C_5$ is independent of $\dt$.
For the second term on the right-hand side of (\ref{inequality of differences between continuous and discrete rds}), it is finite time Euler approximation with same initial value. By Theorem 10.2.2 in Kloeden and Platen \cite{5},
there exists a constant $C_6>0$ such that for sufficiently $\dt>0$, 
$$\norm{X_{r}^{N'\Delta t}\circ \hx_{N'\Delta t}^{-k\tau}  - \hx_{r}^{N'\Delta t}\circ \hx_{N'\Delta t}^{-k\tau}}_2  \leq  C_6\sqrt{\dt},$$
where the choice of $C_6$ is independent of $\dt$. The result follows by taking $\widetilde{K}=C_5+C_6$.
\end{proof}
\begin{corollary}
For any $r\geq 0$, the exact and numerical approximating random periodic solution of equation (\ref{equations of original period SDE}), $X_r^*$ and $\hx_r^*$, given in Theorem \ref{theorem of continuous random periodic solution convergence} and Theorem \ref{theorem of discrete random periodic solution convergence} respectively satisfy
$$\norm{X_r^*-\hx_r^*}_2 \leq \widetilde{K}\sqrt{\Delta{t}}.$$
\end{corollary}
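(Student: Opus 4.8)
The plan is to obtain the bound directly from the three results already in hand: the $L^2$-convergence of the exact pull-back to the random periodic solution (Theorem \ref{theorem of continuous random periodic solution convergence}), the $L^2$-convergence of the discretised pull-back to its own random periodic solution (Theorem \ref{theorem of discrete random periodic solution convergence}), and the uniform error estimate between the two pull-backs as the starting time recedes (Theorem \ref{theorem of error estimation from -kt to r}). The mechanism is a triangle inequality in $L^2(\Omega)$ followed by passage to the limit in $k$.

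First I would fix $r\geq 0$ and a sufficiently small, fixed $\Delta t$, choose a common initial value $\xi$ satisfying Condition (2), and for each $k\in\N$ insert the two pull-back approximations $\xtr$ and $\hx_r^{-k\tau}$ to split
\begin{align*}
\norm{X_r^*-\hx_r^*}_2 \leq \norm{X_r^*-\xtr}_2 + \norm{\xtr-\hx_r^{-k\tau}}_2 + \norm{\hx_r^{-k\tau}-\hx_r^*}_2.
\end{align*}
The key observation is that the left-hand side does not depend on $k$, so one is free to take $\limsup_{k\to\infty}$ of the right-hand side. By Theorem \ref{theorem of continuous random periodic solution convergence} the first term converges to $0$, and by Theorem \ref{theorem of discrete random periodic solution convergence} the third term converges to $0$. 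Since these two summands vanish in the limit, the $\limsup$ of the sum coincides with the $\limsup$ of the middle term, which by Theorem \ref{theorem of error estimation from -kt to r} is at most $\widetilde{K}\sqrt{\Delta t}$. This yields $\norm{X_r^*-\hx_r^*}_2\leq\widetilde{K}\sqrt{\Delta t}$ immediately.

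The argument has no genuine obstacle; the only point deserving care is the bookkeeping in $k$. Since $X_r^*$, $\hx_r^*$ and the bound $\widetilde{K}\sqrt{\Delta t}$ are all independent of $k$, taking the limit leaves an inequality purely between the limiting objects, which is exactly the assertion. I would also remark that using the same $\xi$ (satisfying Condition (2)) for both pull-backs is legitimate, because both Theorem \ref{theorem of continuous random periodic solution convergence} and Theorem \ref{theorem of discrete random periodic solution convergence} established that their respective limits are independent of the chosen initial value. Thus the statement is a direct corollary of the three cited theorems.
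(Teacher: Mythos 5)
Your proposal is correct and follows essentially the same route as the paper: the paper's proof is precisely the triangle inequality $\norm{X_r^*-\hx_r^*}_2 \leq \limsup_{k\to\infty}\bigl[\norm{X_r^*-X_r^{-k\tau}}_2+\norm{X_r^{-k\tau}-\hx_r^{-k\tau}}_2+\norm{\hx_r^{-k\tau}-\hx_r^*}_2\bigr]$, with the three terms handled by Theorems \ref{theorem of continuous random periodic solution convergence}, \ref{theorem of error estimation from -kt to r} and \ref{theorem of discrete random periodic solution convergence} exactly as you describe. Your added remarks on the $k$-independence of the left-hand side and on using a common initial value $\xi$ are sound and merely make explicit what the paper leaves implicit.
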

\begin{proof}
The result follows from 
\begin{align*}
\norm{X_r^*-\hx_r^*}_2
     \leq&\limsup_{k \rightarrow \infty}\left [\norm{X_r^*-X_r^{-k\tau}}_2+  \norm{X_r^{-k\tau}-\hx_r^{-k\tau}}_2+  \norm{\hx_r^{-k\tau}-\hx_r^*}_2\right ].
\end{align*}
\hfill \end{proof}\vskip5pt

\subsection{Modified Milstein method}

For Milstein method, we can use the similar calculation as Euler-Maruyama scheme to get an improved error estimate between discrete approximate solution and the exact solution.

\begin{theorem}\label{theorem of error estimation from -kt to r for SRK1}
Assume Conditions (A), (1$^{\prime}$) and (2). 
Then there exists a constant $K^*>0$ such that for any sufficiently small fixed $\dt$, the error between the exact solution $X_r^{-k\tau}$ and the numerical solution $\hx_r^{-k\tau}$ given by Milstein scheme (\ref{equation of scheme of discrete random periodic solution by steps for RK1}) is
$\limsup_{k\rightarrow\infty}\norm{X_r^{-k\tau}-\hx_r^{-k\tau}}_2\leq K^*{\Delta{t}},$
for all $r\geq 0$, where $K^*$ is independent of $\dt$.
\end{theorem}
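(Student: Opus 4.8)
The plan is to reproduce the two-stage scheme of the Euler--Maruyama analysis (Lemma~\ref{lemma of continuous and discrete random periodic solution error estimation from -kt to 0} and Theorem~\ref{theorem of error estimation from -kt to r}), sharpening every local estimate by one half power of $\dt$ so that the global rate improves from $\sqrt{\dt}$ to $\dt$. First I would prove the grid-point analogue, namely $\limsup_{k\to\infty}\norm{X_{N'\Delta t}^{-k\tau}-\hx_{N'\Delta t}^{-k\tau}}_2\leq K\dt$ with $K$ independent of $N'$ and $\dt$; then I would pass from the grid to an arbitrary $r\geq 0$ exactly as in Theorem~\ref{theorem of error estimation from -kt to r}, decomposing $X_r^{-k\tau}=X_r^{N'\Delta t}\circ X_{N'\Delta t}^{-k\tau}$, invoking the continuity of $X_r^{N'\Delta t}(\cdot)$ in $L^2(\Omega)$ for the pull-back error and the finite-time order-one strong convergence of the Milstein scheme (Kloeden--Platen \cite{5}) for the remaining finite-horizon segment.

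For the grid-point estimate I would set $e_i=X_{-k\tau+i\Delta t}^{-k\tau}-\hxit{i}$ and analyse the weighted one-step increment built from $(\at)^{-2i}\abs{e_i}^2$, expanding it as in (\ref{equation of expansion of sum part}) with drift and diffusion remainders $B_1,B_2$ now defined relative to the Milstein iteration (\ref{equation of scheme of discrete random periodic solution by steps for RK1}). The drift remainder is unproblematic: as in the Euler case $\E\abs{B_1}^2$ is already of order $(\dt)^3$ (the bottleneck being the $\sqrt{\dt}$ Brownian roughness of $X_s$ controlled by Lemma~\ref{lemma of order of dt between different times}), and the drift--diffusion cross contribution $\int\!\int dW\,ds$ is reproduced by the $\dZ_i$ stage. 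The decisive gain must come from the diffusion remainder: I would expand the exact increment $\int e^{-A(\cdot)}g(s,X_s^{-k\tau})\,dW_s$ by the It\^o--Taylor formula and subtract the Milstein increment, so that the leading double stochastic integral $\int\!\int \partial_x g\cdot g\,dW\,dW$ is cancelled exactly by the $\frac{(\dW_i)^2-\dt}{4\sqrt{\dt}}$ stage, lowering $\E\abs{B_2}^2$ from order $(\dt)^2$ to order $(\dt)^3$.

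With both remainders contributing free parts of order $(\dt)^3$ and Lipschitz parts linear in $\norm{e_i}_2^2$, taking expectations, killing the martingale increments by the tower property (as with the seventh term of the Euler proof) and absorbing the half-integer-order cross terms by Young's inequality, the weighted increment obeys
$$\frac{\abs{e_{i+1}}^2}{(\at)^2}-\abs{e_i}^2\leq c_1\dt\,\norm{e_i}_2^2+c_2(\dt)^3$$
after expectation. Summing over $i$, using that $\sum_i(\at)^{-2i}\sim(2\alpha\dt)^{-1}(\at)^{-2M}$ and applying the discrete Gronwall inequality with $\dt,\mu,\varepsilon$ chosen small enough that the Gronwall factor stays below $1$, I expect $\norm{e_M}_2^2\leq c_3(\dt)^2$ uniformly in $M$, which after taking $M=N+N'$ and letting $k\to\infty$ yields the $\dt$ rate at grid points.

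The main obstacle is the sharp control of the diffusion remainder $B_2$: one must show that after subtracting the two Milstein correction stages the residual is genuinely $O((\dt)^{3/2})$ in $L^2$, i.e.\ $O((\dt)^3)$ in mean square. Two ingredients enter here and both are supplied by Condition~($1^{\prime}$): the boundedness of the first-order partial derivatives of $f$ and $g$ bounds the coefficient $\partial_x g\cdot g$ of the reproduced double integral and legitimises the finite-difference approximation $\frac{g(i\dt,\huip{i})-g(i\dt,\huim{i})}{2\sqrt{\dt}}$; and the Lipschitz-in-time exponent $1$ (rather than $1/2$) forces the time-discretisation part $g(s,\cdot)-g(i\dt,\cdot)$ of $B_2$ to contribute only $(\dt)^3$ in mean square, which would fail under Condition~(1). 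These are also what guarantee the analogues of the constants $\hk{1},\hk{2},\hk{3},\hk{4}$ referred to in the Remark. Once this local estimate is secured, the remaining manipulations merely repeat the Euler--Maruyama computation.
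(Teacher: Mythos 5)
Your overall architecture is the paper's: a grid-point error lemma via the weighted telescoping identity and discrete Gronwall inequality, then passage to arbitrary $r\geq 0$ by the semi-flow decomposition, $L^2$-continuity in the initial value, and finite-horizon order-one Milstein convergence. Your treatment of the diffusion remainder is also correct and matches the paper: subtracting the double stochastic integral with the finite-difference coefficient $\frac{1}{2\sqrt{\dt}}\rbrac{g\rbrac{i\dt,\hu_+(\hxit{i})}-g\rbrac{i\dt,\hu_-(\hxit{i})}}$ lowers the free part of the diffusion remainder to order $(\dt)^3$ in mean square, and its cross term dies under conditioning. But there is a genuine gap on the drift side. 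You declare the drift remainder ``unproblematic'' because, as in the Euler case, its mean square is already of free order $(\dt)^3$, and you locate the decisive gain exclusively in $B_2$. With that, your claimed one-step inequality $c_1\dt\norm{e_i}_2^2+c_2(\dt)^3$ is unobtainable. The obstruction is the cross term $2\,\E\bigl[e_i^T\rbrac{\frac{\At}{\at}}B_1\bigr]$: if the free part of $B_1$ is only $O((\dt)^{3/2})$ in $L^2$, this term is of size $(\dt)^{3/2}\norm{e_i}_2$, exactly as in (\ref{equation of difference 1}), and Young's inequality forces a dichotomy. Splitting as $(\dt)^{3/2}\norm{e_i}_2\leq \sigma^{-1}(\dt)^{3}+\sigma\norm{e_i}_2^2$ with $\sigma=O(1)$ keeps the free term at $(\dt)^3$ but puts an $O(1)$ (not $O(\dt)$) coefficient on $\norm{e_i}_2^2$, which violates the infinite-horizon stability requirement $\rbrac{1+\hk{43}\dt}(\at)^2<1$; taking instead $\sigma=O(\dt)$ preserves stability but degrades the free term to $(\dt)^2$ per step, capping the global rate at $\sqrt{\dt}$. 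This dichotomy is precisely the content of the paper's remark immediately following its proof: the stage $\frac{\dZ_i}{2\sqrt{\dt}}\sbrac{f\rbrac{i\dt,\hu_+(\hxit{i})}-f\rbrac{i\dt,\hu_-(\hxit{i})}}$ exists for no other purpose, and it is why the scheme is called ``modified.''

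The paper's proof repairs this by subtracting the iterated integral $\int_{i\dt-k\tau}^{s}F_i^{(1)}(\hxit{i})\,dW_\upsilon$ \emph{inside} the drift remainder, i.e.\ working with $\tilde B_1$ whose free part is $O((\dt)^4)$ in mean square, so the cross term becomes $(\dt)^2\norm{e_i}_2\leq \frac{1}{2}\rbrac{(\dt)^3+\dt\norm{e_i}_2^2}$, compatible with both the rate and the Gronwall stability in (\ref{eqn4.16}); compare (\ref{equation of difference 2}) with (\ref{equation of difference 1}). You do mention in passing that the drift--diffusion contribution $\int\!\int dW\,ds$ is ``reproduced by the $\dZ_i$ stage,'' so the correct ingredient appears on your page, but your explicit assertion that $(\dt)^3$ suffices for the drift remainder, combined with absorbing ``the half-integer-order cross terms by Young's inequality,'' is exactly the step that fails; as written your argument proves only the Euler rate. (An alternative repair, not taken by the paper, would be to exploit that the leading fluctuation of $B_1$ has vanishing conditional expectation given $\mathcal{F}^{i\dt-k\tau}$, in the spirit of Milstein's fundamental theorem, but you do not propose this.) The rest of your outline --- the role of the time-H\"older exponent $1$ in Condition (1$^{\prime}$), the boundedness of $\partial_x f,\partial_x g$ legitimising the finite differences, the Gronwall step, and the finite-time extension --- agrees with the paper.
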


\begin{proof}
In the following proof, we always denote by $\hat K_\cdot$ the constant derived from the unlderlining computation unless otherwise stated.
We consider the error in the similar way as Lemma \ref{lemma of continuous and discrete random periodic solution error estimation from -kt to 0}.
\begin{eqnarray}
&&\hspace{1.5cm}\rbrac{\at}^{-2M}\abs{X_{-k\tau+M\Delta t}^{-k\tau}-\hxit{M}}^2\label{equation of discrete rds expansion in chapter 4 for SRK1}\\
&=&\sum_{i=0}^{M-1}\rbrac{\at}^{-2i}\rbrac{\frac{\abs{X_{-k\tau+(i+1)\Delta t}^{-k\tau}-\hxit{(i+1)}}^2}{\rbrac{\at}^2}-\abs{X_{-k\tau+i\Delta t}^{-k\tau}-\hxit{i}}^2}.\notag
\end{eqnarray}
For simplicity we denote
\begin{align*}
\tilde B_1=&\frac{1}{\at}\int_{i\dt-k\tau}^{(i+1)\dt-k\tau}\sbrac{e^{-A\rbrac{s+k\tau-(i+1)\dt}}f(s,X_s^{-k\tau})-f(i\dt,\hxit{i})\right.\\
&\hspace{1cm}  \left. -\int_{i\dt-k\tau}^{s} F_i^{(1)}(\hxit{i})  dW_\upsilon }ds.\\
\tilde B_2=&\frac{1}{\at}\int_{i\dt-k\tau}^{(i+1)\dt-k\tau}\sbrac{e^{-A\rbrac{s+k\tau-(i+1)\dt}}g(s,X_{s}^{-k\tau})-g(i\dt,\hxit{i})\right.\\
&\hspace{1cm}  \left. -\int_{i\dt-k\tau}^{s} G_i^{(1)}(\hxit{i})  dW_\upsilon }dW_s,
\end{align*}
with
\begin{align*}
F_i^{(1)}(x)=\frac{1}{2\sqrt{\dt}}\rbrac{f\rbrac{i\dt,\hu_+(x)}-f\rbrac{i\dt,\hu_-(x)}},\\
G_i^{(1)}(x)=\frac{1}{2\sqrt{\dt}}\rbrac{g\rbrac{i\dt,\hu_+(x)}-g\rbrac{i\dt,\hu_-(x)}}.
\end{align*}
Therefore,
\begin{eqnarray*}
&&X_{-k\tau+(i+1)\Delta t}^{-k\tau}-\hxit{(i+1)}\\
&=&e^{A\dt}X_{-k\tau+i\Delta t}^{-k\tau}-\rbrac{\At}\hxit{i}+\rbrac{\at}\rbrac{\tilde B_1+\tilde B_2}.
\end{eqnarray*}
Now we consider
\begin{eqnarray}
&&\frac{\abs{X_{-k\tau+(i+1)\Delta t}^{-k\tau}-\hxit{(i+1)}}^2}{\rbrac{\at}^2}-\abs{X_{-k\tau+i\Delta t}^{-k\tau}-\hxit{i}}^2\label{equation of expansion of sum part for SRK1}\\
&=&\rbrac{X_{-k\tau+i\Delta t}^{-k\tau}-\hxit{i}}^T  \rbrac{\frac{e^{A\dt}}{\at}-I}  \rbrac{\frac{e^{A\dt}}{\at}+I} \notag\\
&& \hskip3cm \times  \rbrac{X_{-k\tau+i\Delta t}^{-k\tau}-\hxit{i}}\notag\\
&&+\rbrac{\hxit{i}}^T  \rbrac{\frac{e^{A\dt}-I-A\dt}{\at}}^2  \rbrac{\hxit{i}}  +\tilde B_1^T\tilde B_1+\tilde B_2^T\tilde B_2\notag\\
&&+2\rbrac{X_{-k\tau+i\Delta t}^{-k\tau}-\hxit{i}}^T  \rbrac{\frac{e^{A\dt}}{\at}}  \rbrac{\frac{e^{A\dt}-I-A\dt}{\at}}   \rbrac{\hxit{i}}\notag\\
&&+2\rbrac{\rbrac{X_{-k\tau+i\Delta t}^{-k\tau}}^T  \rbrac{\frac{e^{A\dt}}{\at}}  -  \rbrac{\hxit{i}}^T  \rbrac{\frac{\At}{\at}}}  \tilde B_1\notag\\
&&+2\rbrac{\rbrac{X_{-k\tau+i\Delta t}^{-k\tau}}^T  \rbrac{\frac{e^{A\dt}}{\at}}  -  \rbrac{\hxit{i}}^T  \rbrac{\frac{\At}{\at}}}  \tilde B_2  +2\tilde B_1^T\tilde B_2\notag.
\end{eqnarray}
By the similar analysis as (\ref{eqn4.5}) and (\ref{eqn4.7}), we have
\begin{align*}
\E\sbrac{\tilde B_1^T\tilde B_1}\leq& \hk{22}\rbrac{\dt}^4  +\frac{(1+\mu)\beta_1^2\rbrac{\dt}^2}{\rbrac{\at}^2}    \norm{X_{-k\tau+i\Delta t}^{-k\tau}-\hxit{i}}_2^2 \\
&+\hk{23}^2(\dt)^3  \norm{X_{-k\tau+i\Delta t}^{-k\tau}-\hxit{i}}_2^2,
\end{align*}
and
\begin{align*}
\E\sbrac{\tilde B_2^T\tilde B_2}\leq&  \hk{24}\rbrac{\dt}^3  +\frac{(1+\mu)\beta_2^2\dt}{\rbrac{\at}^2}  \norm{X_{-k\tau+i\Delta t}^{-k\tau}  -\hxit{i}}_2^2 \\ 
&+\hk{25}(\dt)^2\norm{X_{-k\tau+i\Delta t}^{-k\tau}  -\hxit{i}}_2^2.
\end{align*}
The crossing product terms in (\ref{equation of expansion of sum part for SRK1}) are estimated similar as (\ref{eqn4.8}) as follows,
\begin{align}
&\E\sbrac{2\rbrac{\rbrac{X_{-k\tau+i\Delta t}^{-k\tau}}^T  \rbrac{\frac{e^{A\dt}}{\at}}  -  \rbrac{\hxit{i}}^T  \rbrac{\frac{\At}{\at}}}  \tilde B_1}\notag\\
\leq & \hk{26}(\dt)^{4}  +\frac{ \sqrt{1+\mu} \beta_1\hk{27}  (\dt)^3}{(\at)^2}  \norm{  X_{-k\tau+i\Delta t}^{-k\tau}  -\hxit{i}  }_2\notag\\
&+  \frac{2\sqrt{\hk{22}}(\dt)^{2}}{\at}  \norm{X_{-k\tau+i\Delta t}^{-k\tau}  -\hxit{i}}_2  \rbrac{1+\dt\abs{A}}  \label{equation of difference 2}\\
&+  \frac{2\sqrt{1+\mu}\beta_1\dt}{(\at)^2}  \norm{X_{-k\tau+i\Delta t}^{-k\tau}  -\hxit{i}}_2^2  \rbrac{1+\dt\abs{A}}\notag\\
&+2\hk{23}(\dt)^{3/2}  \norm{X_{-k\tau+i\Delta t}^{-k\tau}-\hxit{i}}_2^2 \rbrac{1+\dt\abs{A}}.\notag
\end{align}
The seventh term remain 0 under conditional expectation.
\begin{align*}
&\E\sbrac{ \rbrac{\rbrac{X_{-k\tau+i\Delta t}^{-k\tau}}^T  \rbrac{\frac{e^{A\dt}}{\at}}  -  \rbrac{\hxit{i}}^T  \rbrac{\frac{\At}{\at}}}  \tilde B_2   }=0
\end{align*}
For the last term,
\begin{eqnarray*}
\E\sbrac{2\tilde B_1^T \tilde B_2}&\leq&2\norm{\tilde B_1^T}_2\cdot\norm{\tilde B_2}_2\\
&\leq&  \hk{28}(\dt)^{7/2}  +\hk{29}(\dt)^{3/2}  \norm{X_{-k\tau+i\Delta t}^{-k\tau}  -\hxit{i}}_2^2.
\end{eqnarray*}
Combining all the estimation above, we have 
\begin{align}
&\frac{\abs{X_{-k\tau+(i+1)\Delta t}^{-k\tau}-\hxit{(i+1)}}^2}{\rbrac{\at}^2}-\abs{X_{-k\tau+i\Delta t}^{-k\tau}-\hxit{i}}^2\label{equation of combination of all terms}\\
\leq&\rbrac{\frac{(1+\mu)\beta_2^2\dt}{\rbrac{\at}^2}  + \frac{2\sqrt{(1+\mu)}\beta_1\dt}{(\at)^2} + \hk{40}(\dt)^{3/2}}\norm{X_{-k\tau+i\Delta t}^{-k\tau}  -\hxit{i}}_2^2\notag\\
&   +\hk{41}\rbrac{\dt}^3 +  \hk{42}(\dt)^2  \norm{X_{-k\tau+i\Delta t}^{-k\tau}  -\hxit{i}}_2.\notag
\end{align}
Choosing $\mu$ and $\dt$ small enough and applying Young's inequality to the term\\ $(\dt)^{2}\norm{X_{-k\tau+i\Delta t}^{-k\tau}  -\hxit{i}}_2$, and from (\ref{equation of discrete rds expansion in chapter 4 for SRK1}) we get
\begin{eqnarray}\label{eqn4.16}
&&\hskip 1cm\rbrac{\at}^{-2M}  \norm{X_{-k\tau+M\Delta t}^{-k\tau}  -\hxit{M}}_2^2\label{inequality of error between exact and numerical for SRK1}\\
&\leq&  \sum_{i=0}^{M-1}  \rbrac{\at}^{-2i}  \rbrac{  \hk{43}\dt    \norm{X_{-k\tau+i\Delta t}^{-k\tau}  -\hxit{i}}_2^2  +\hk{44}(\dt)^3}\notag\\
&\leq&  \hk{45}(\dt)^2(\at)^{-2M}  +\hk{43}(\dt)  \sum_{i=0}^{M-1}  \rbrac{\at}^{-2i}  \norm{X_{-k\tau+i\Delta t}^{-k\tau}  -\hxit{i}}_2^2,\notag
\end{eqnarray}
where
\begin{align*}
\hk{45} = \frac{\hk{44}(\at)^2  }  {2\alpha\dt-\alpha^2\rbrac{\dt}^2}(\dt)=\frac{\hk{44}(\at)^2}  {2\alpha-\alpha^2\rbrac{\dt}},\ 
\hk{43} = \frac{(1+\mu)(2\beta_1+\beta_2^2+\varepsilon)}{(\at)^2}.
\end{align*}
Here $\mu$, $\varepsilon$ and the time step $\dt$ are chosen small enough such that
$\rbrac{\hk{43}\dt+1}(\at)^2<1.$
Applying the discrete time Gronwall inequality, from (\ref{inequality of error between exact and numerical for SRK1}), we have
\begin{align}
&\norm{X_{-k\tau+M\Delta t}^{-k\tau}  -\hxit{M}}_2^2 \notag\\
 \leq&  \hk{45}(\dt)^2  +\hk{45}  \hk{43}(\dt)^2  \frac{1-\rbrac{\rbrac{1+\hk{43}\dt}\rbrac{\at}^2}^M}{1-\rbrac{1+\hk{43}\dt}\rbrac{\at}^2} 
\leq \hk{46}(\dt)^2.\label{equation of final error result}
\end{align}
We can find a constant $\hk{46}$ which is independent of $M$ and $\dt$. We take $M=N$, where $N\dt=k\tau$, then
\begin{align*}
\limsup_{k\rightarrow\infty}\norm{X_{0}^{-k\tau}-\hx_{0}^{-k\tau}}_2=  \limsup_{N\rightarrow\infty}\norm{X_{-k\tau+N\Delta t}^{-k\tau}-\hx_{-k\tau+N\Delta t}^{-k\tau}}_2  \leq \sqrt{\hk{46}}\dt.
\end{align*}
The discussion about the convergence from time $-k\tau$ to $r$ are the same as the Theorem \ref{theorem of error estimation from -kt to r} as we know that the Milstein scheme with addition term also has strong order 1.0 for finite horizon.
\vskip-0.9cm \hfill 
\end{proof}\vskip15pt

\begin{remark}
Compared with Euler-Maruyama scheme, the order 1.0 Milstein method improved the order by replacing terms $B_1$ and $B_2$ with more accurate approximation $\tilde B_1$ and $\tilde B_2$. If we did not have the additional term $$\frac{\dZ_i}{2\sqrt{\dt}}\sbrac{f\rbrac{i\dt,\huip{i})}-f\rbrac{i\dt,\huim{i}} },$$ we would only have the result with $B_1$ and $\tilde B_2$. 

Here if we compare the scheme without additional term, it is important to notice that the term $\norm{X_{-k\tau+i\Delta t}^{-k\tau}  -\hxit{i}}_2$ in (\ref{equation of difference 2}) is multiplied by $(\dt)^{2}$. But in (\ref{equation of difference 1}) it is multiplied by $(\dt)^{3/2}$. When we apply the Young's inequality in (\ref{equation of combination of all terms}), to make sure the free term with $(\dt)^3$, we have 
$$\hk{42}(\dt)^{\frac{3}{2}}  \norm{X_{-k\tau+i\Delta t}^{-k\tau}  -\hxit{i}}_2\leq \hk{47}(\dt)^3 + \hk{48}\norm{X_{-k\tau+i\Delta t}^{-k\tau}  -\hxit{i}}_2^2.$$ 
This will influence the constant $\hk{43}$ in (\ref {eqn4.16}) to fail the inequality $\rbrac{\hk{43}\dt+1}(\at)^2<1.$ On the finite horizon, $\hk{46}$ is still bounded by the boundedness of $M$. But in the case of the infinite horizon, the scheme is under the risk of instability. For this reason, we modify the scheme with the additional term from higher order scheme.
\end{remark}

\begin{corollary}
For any $r\geq 0$, the exact and numerical approximating random periodic solution of equation (\ref{equations of original period SDE}), $X_r^*$ and $\hx_r^*$, given in Theorem \ref{theorem of continuous random periodic solution convergence} and Theorem \ref{thm10} respectively satisfy
$$\norm{X_r^*-\hx_r^*}_2 \leq K^*{\Delta{t}}.$$
Here $K^*$ is independent of $\Delta t$ and $r$.
\end{corollary}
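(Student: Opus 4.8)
The plan is to mimic the proof of the Euler--Maruyama corollary, splitting the error $\norm{X_r^*-\hx_r^*}_2$ by the triangle inequality through the two finite pull-back solutions $\xtr$ and $\hx_r^{-k\tau}$ that start from the same admissible initial datum $\xi$ at time $-k\tau$. First I would write, for every $k\in\N$,
\begin{align*}
\norm{X_r^*-\hx_r^*}_2 \leq \norm{X_r^*-\xtr}_2 + \norm{\xtr-\hx_r^{-k\tau}}_2 + \norm{\hx_r^{-k\tau}-\hx_r^*}_2,
\end{align*}
and then pass to the $\limsup$ as $k\to\infty$ on the right-hand side.

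The three terms are then handled by the three convergence and error results already established for the Milstein setting. The first term $\norm{X_r^*-\xtr}_2$ tends to $0$ as $k\to\infty$ by Theorem \ref{theorem of continuous random periodic solution convergence}, since $X_r^*$ is precisely the $L^2$-limit of the continuous pull-back $\xtr(\xi)$ for any $\xi$ satisfying Condition (2). Symmetrically, the third term $\norm{\hx_r^{-k\tau}-\hx_r^*}_2$ tends to $0$ by Theorem \ref{thm10}, which identifies $\hx_r^*$ as the $L^2$-limit of the Milstein pull-back $\hx_r^{-k\tau}(\xi)$. The middle term is the decisive one: Theorem \ref{theorem of error estimation from -kt to r for SRK1} gives $\limsup_{k\to\infty}\norm{\xtr-\hx_r^{-k\tau}}_2 \leq K^*\Delta t$, with $K^*$ independent of $k$, $\Delta t$ and $r$.

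Because the first and third terms vanish in the limit while the middle term is controlled uniformly in $k$ by $K^*\Delta t$, taking the $\limsup$ yields $\norm{X_r^*-\hx_r^*}_2 \leq K^*\Delta t$ with the same constant $K^*$, which is independent of $\Delta t$ and $r$ as required. The gain of a full power of $\Delta t$ over the Euler case is inherited directly from Theorem \ref{theorem of error estimation from -kt to r for SRK1}, so no fresh estimate is needed at this stage.

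I do not anticipate a genuine obstacle here: the statement is a soft consequence of the three preceding theorems, and the only care needed is the bookkeeping of the $\limsup$. The outer two terms must be shown to vanish along the full sequence $k\to\infty$ rather than a subsequence, which is guaranteed because both Theorem \ref{theorem of continuous random periodic solution convergence} and Theorem \ref{thm10} assert honest $L^2$-convergence; and one should record that $X_r^*$ and $\hx_r^*$ are both built from the same fixed $\xi$ obeying Condition (2), so all three quantities refer to a single coherent pull-back sequence and the decomposition is legitimate.
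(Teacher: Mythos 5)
Your proposal is correct and matches the paper's approach exactly: the paper proves the analogous Euler--Maruyama corollary by precisely this triangle-inequality split through $X_r^{-k\tau}$ and $\hx_r^{-k\tau}$ followed by a $\limsup$ in $k$, and the Milstein corollary is left to the same argument with Theorem \ref{theorem of error estimation from -kt to r for SRK1} supplying the $K^*\Delta t$ bound on the middle term and Theorems \ref{theorem of continuous random periodic solution convergence} and \ref{thm10} killing the outer terms. Your bookkeeping remarks about full-sequence convergence and the common initial datum $\xi$ are sound and add nothing beyond what the paper's one-line proof implicitly uses.
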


\begin{example} To illustrate the errors in Theorems \ref{theorem of error estimation from -kt to r} and 
\ref{theorem of error estimation from -kt to r for SRK1}, we simulate the random periodic solution of Example 1 with 2000 different noise realisations by both Euler-Maruyama method and modified Milstein method. We then apply Monte Carlo method to obtain the root mean square errors between the exact random periodic solution and the respective numerical schemes with 12 different step sizes: $1\times 10^{-5},2\times 10^{-5},3\times 10^{-5},4\times 10^{-5}$,$1\times 10^{-4},2\times 10^{-4},3\times 10^{-4},4\times 10^{-4}$, $1\times 10^{-3},2\times 10^{-3},3\times 10^{-3},4\times 10^{-3}$, where the exact one is given explicitly as 
$
X_t^*=\int _{-\infty}^t{\rm e}^{-(\pi+{1\over 2})(t-s)+W_t-W_s}\sin (\pi s)ds.
$
The relationship between the root mean square errors and the step size is shown in the log-log plot Fig. \ref{graph of example 2}. The difference of the orders of convergence between the Euler-Maruyama method and Milstein method is clear from the numerical simulations.
\begin{figure}
\centering
\includegraphics[scale=0.7]{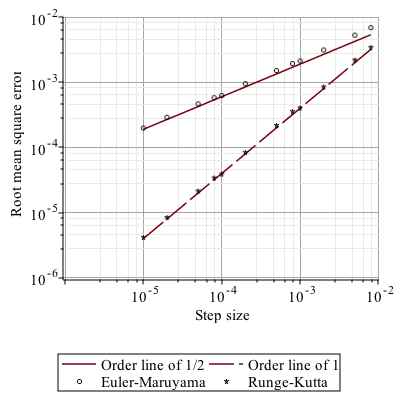}
\caption{Root mean square error versus step size as log-log plot for the SDE (\ref{equation of example 1})}\label{graph of example 2}
\end{figure}

\end{example}

\section {Periodic measures}

Let ${\cal P} (\R^m)$ denote all probability measures on $\R^m$. For $P_1, P_2\in {\cal P} (\R^m)$, define metric $d_\mathbb{L}$ as follows:
\begin{align*}
d_\mathbb{L}(P_1, P_2)
=\sup_{\varphi\in\mathbb{L}}\abs{\int_{\R^m}\varphi(x)P_1(dx) - \int_{\R^m}\varphi(x)P_2(dx)},
\end{align*}
where
$$\mathbb{L}=\{\varphi:\R^m\rightarrow\R:\abs{\varphi(x)-\varphi(y)}\leq\abs{x-y}\text{ and }\abs{\varphi(\cdot)}\leq 1\}.$$
From \cite{watanabe}, it is not difficult to prove that the metric $d_{\mathbb L}$ is equivalent to the weak topology. This useful observation was made in \cite {yuanmao}.

We can define the transition probability of the semi-flow $u$ which is generated by the solution of (\ref{zhao12}) as follows:
\begin{eqnarray}\label{5.1}
P(t+s,s,\xi,\Gamma):=P(\{\omega: u(t+s,s,\omega)\xi\in\Gamma\})=P(X_{t+s}^s(\xi)\in \Gamma),
\end{eqnarray}
for any $\Gamma\in\mathcal{B}(\R^m)$.
For any $\varphi$ being bounded and measurable 
\begin{eqnarray*}
P(t+s, s)\varphi (\xi)=\int_{\R^m} P(t+s, s, \xi, d\eta) \varphi(\eta)=E\varphi (X_{t+s}^s(\xi))
\end{eqnarray*}
defines a semigroup satisfying 
$$ P(t+s+r,s+r) \circ P(s+r, s)=P(t+s+r, s),\  r, t\geq 0,\ s\in \R.$$

Recall the following definition of periodic measure given in \cite{feng-zhao} .
\begin{definition} (\cite{feng-zhao})
The measure function $\rho_{\cdot}: \R\to {\cal P} (\R^m)$ is called periodic measure if it satisfies for any $s\in \R,\ t\geq 0$, and $\Gamma\in {\cal B}(\R^m)$,
\begin{eqnarray*}
\rho_{s+\tau}=\rho_s,\
\int_{\R^m} P(t+s, s, x, \Gamma)\rho_s(dx)=\rho_{t+s}(\Gamma).
\end{eqnarray*}
\end{definition}

From Theorem \ref{theorem of continuous random periodic solution convergence}, we know that the random periodic solution of (\ref{zhao12}) exists. So by the result in \cite{feng-zhao}, we know that the periodic measure $\rho_.$ exists, which can be defined as  the law of random periodic solutions, i.e.
\begin{eqnarray}\label{5.2}
 \rho_r(\Gamma)=P(X_r^*\in\Gamma).
 \end{eqnarray}

Similarly, we can define the transition probability of the discrete semi-flow $\hat u$ from Euler-Maruyama scheme by 
\begin{eqnarray}\label{5.3}
\hat P(t+s,s,\xi,\Gamma):=P(\{\omega: \hat u(t+s,s,\omega)\xi\in\Gamma\})=P(\hat X_{t+s}^s(\xi)\in \Gamma).
\end{eqnarray}
For any $\varphi$ being bounded and measurable 
\begin{eqnarray*}
\hat P(t+s, s)\varphi (\xi)=\int_{\R^m} \hat P(t+s, s, \xi, d\eta) \varphi(\eta)=E\varphi (\hat X_{t+s}^s(\xi))
\end{eqnarray*}
defines a semigroup satisfying 
$$ \hat P(t+s+r,s+r) \circ \hat P(s+r, s)=\hat P(t+s+r, s),\  r, t\geq 0,\ s\in \R,$$
Similar to the result in \cite {feng-zhao}, the measure function defined by 
\begin{eqnarray}\label{5.4}
 \hat\rho_r(\Gamma)=P(\hat X_r^*\in\Gamma),
 \end{eqnarray}
 is a periodic measure for Markov semigroup $\hat P(t+s,s)$. It satisfies for any $s\in \R,\ t\geq 0$, and $\Gamma\in {\cal B}(\R^m)$,
  \begin{eqnarray*}
\hat \rho_{s+\tau}=\hat \rho_s,\
\int_{\R^m} \hat P(t+s, s, x, \Gamma)\hat \rho_s(dx)=\hat \rho_{t+s}(\Gamma).
\end{eqnarray*}

 We have following error estimate of $\rho_.$ and $\hat \rho_.$. Consider the Euler-Maruyama scheme (\ref{{equation of scheme of discrete random periodic solution by steps}}) first.
\begin{theorem}\label{thm16}
Assume Conditions (A), (1) and (2). Then periodic measures $\rho_.$ and $\hat \rho_.$ of the Markov semigroup generated by the exact solution of (\ref{equations of original period SDE}) and the approximation (\ref{{equation of scheme of discrete random periodic solution by steps}}) are weak limits of transition probabilities along integral multiples of period, i.e.
\begin{eqnarray}\label{eqn5.5a}
P(r,-k\tau,\xi)\to \rho_r,\ \hat P(r,-k\tau,\xi)\to \hat \rho_r,\ {\rm as\ k\to \infty},
\end{eqnarray}
weakly and the error estimate is
\begin{eqnarray}\label{eqn5.6aa}
d_\mathbb{L}(\rho_r,\widehat{\rho}_r)\leq \widetilde{K}\sqrt{\dt},
\end{eqnarray}
where $\widetilde{K}$ is independent of $\dt$ and $r$.
\end{theorem}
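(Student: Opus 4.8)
The plan is to derive both assertions from the $L^2$-convergence results already established, handling the exact and the discretised flows in parallel. For the weak convergence in \eqref{eqn5.5a}, I would first note that by \eqref{5.1} the transition probability $P(r,-k\tau,\xi,\cdot)$ is exactly the law of $X_r^{-k\tau}(\xi)$, while by Theorem \ref{theorem of continuous random periodic solution convergence} we have $X_r^{-k\tau}(\xi)\to X_r^*$ in $L^2(\Omega)$ as $k\to\infty$. Since $L^2$-convergence implies convergence in distribution, the laws converge weakly, and because $\rho_r$ is defined in \eqref{5.2} as the law of $X_r^*$, the first limit follows. The identical argument, now invoking Theorem \ref{theorem of discrete random periodic solution convergence} together with \eqref{5.4}, yields $\hat P(r,-k\tau,\xi)\to\hat\rho_r$ weakly.

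For the error estimate \eqref{eqn5.6aa}, I would unwind the definition of $d_\mathbb{L}$. For any $\varphi\in\mathbb{L}$ one has $\int_{\R^m}\varphi\,d\rho_r=\E\varphi(X_r^*)$ and $\int_{\R^m}\varphi\,d\hat\rho_r=\E\varphi(\hx_r^*)$ by \eqref{5.2} and \eqref{5.4}. Since every $\varphi\in\mathbb{L}$ is $1$-Lipschitz, $\abs{\varphi(X_r^*)-\varphi(\hx_r^*)}\leq\abs{X_r^*-\hx_r^*}$ pointwise, whence
\[
\abs{\E\varphi(X_r^*)-\E\varphi(\hx_r^*)}\leq\E\abs{X_r^*-\hx_r^*}\leq\norm{X_r^*-\hx_r^*}_2,
\]
the last step being Jensen's (or Cauchy--Schwarz) inequality. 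Taking the supremum over $\varphi\in\mathbb{L}$ on the left gives $d_\mathbb{L}(\rho_r,\hat\rho_r)\leq\norm{X_r^*-\hx_r^*}_2$, and the Corollary following Theorem \ref{theorem of error estimation from -kt to r} bounds the right-hand side by $\widetilde{K}\sqrt{\dt}$ with $\widetilde{K}$ independent of $\dt$ and $r$, which is precisely \eqref{eqn5.6aa}.

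Because the genuine work, namely the uniform-in-$r$ strong error bound $\norm{X_r^*-\hx_r^*}_2\leq\widetilde{K}\sqrt{\dt}$, has already been carried out, the remaining argument is essentially a translation between the strong ($L^2$) topology on random variables and the weak topology on their laws. The only point requiring care is the justification that $d_\mathbb{L}$ metrises weak convergence, i.e.\ that testing against the class $\mathbb{L}$ of bounded $1$-Lipschitz functions suffices; this is exactly the observation borrowed from \cite{watanabe,yuanmao} recorded just before \eqref{5.1}, so no additional work is needed. I therefore do not anticipate a genuine obstacle: this theorem is a clean repackaging of the strong estimate and the two convergence theorems into the language of periodic measures, with the coupling inequality above being the sole new ingredient.
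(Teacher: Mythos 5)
Your proposal is correct and matches the paper's proof in all essentials: both identify the transition probabilities and the periodic measures as the laws of the pull-back solutions $X_r^{-k\tau}$, $\hx_r^{-k\tau}$ and of their $L^2$ limits $X_r^*$, $\hx_r^*$, test against the bounded $1$-Lipschitz class $\mathbb{L}$, and pass from the strong $L^2$ estimate to $d_{\mathbb{L}}$ via the Lipschitz bound and Jensen's (Cauchy--Schwarz) inequality. The only cosmetic difference is that you invoke the already-established Corollary $\norm{X_r^*-\hx_r^*}_2\leq\widetilde{K}\sqrt{\dt}$ directly, whereas the paper re-runs the underlying three-term triangle inequality through $X_r^{-k\tau}$ and $\hx_r^{-k\tau}$ with an $\epsilon/3$ argument inside the theorem's proof; the mathematical content is identical.
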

\begin{proof}
To prove  (\ref{eqn5.5a}), by (\ref {5.1}), (\ref{5.2}), Theorem \ref{theorem of continuous random periodic solution convergence} and Jensen's inequality, we have 
\begin{eqnarray*}
&& d_{\mathbb L}(P(r,-k\tau,\xi), \rho_r)\\
&=& \sup_{\varphi\in\mathbb{L}}\abs{\int_{\R^m}\varphi(x)P(r,-k\tau,\xi,dx)-\int_{\R^m}\varphi(x)\rho_r(dx)}\\
&=&\sup_{\varphi\in\mathbb{L}}\abs{\E[\varphi(X_r^{-k\tau})-\varphi(X_r^*)]}\\
&\leq & \sup_{\varphi\in\mathbb{L}}\E|\varphi(X_r^{-k\tau})-\varphi(X_r^*)|\\
&\leq&\E\abs{X_r^{-k\tau}-X_r^*}\\
&\leq &\norm{X_r^{-k\tau}-X_r^*}_2\\
&\to &0,
\end{eqnarray*}
as $k\to \infty$. So $P(r,-k\tau,\xi)\to \rho_r$ weakly as $k\to\infty$ from the well known result in \cite{watanabe}. Similarly, we can have for the discrete system, 
$\hat P(r,-k\tau,\xi)\to \hat \rho_r$ weakly as $k\to \infty$.
\
Now we consider the metric between these two periodic measures $\rho_.$ and $\hat \rho_.$,
\begin{align}\label{feng62}
&d_\mathbb{L}(\rho_r,\widehat{\rho}_r)\nonumber\\
=&\sup_{\varphi\in\mathbb{L}}\abs{\int_{\R^m}\varphi(x)\rho_r(dx) - \int_{\R^m}\varphi(x)\widehat{\rho}_r(dx)}\nonumber\\
\leq&\sup_{\varphi\in\mathbb{L}}\abs{\int_{\R^m}\varphi(x)\rho_r(dx)
- \int_{\R^m}\varphi(x)P(r,-k\tau,\xi,dx)}\nonumber\\
&+\sup_{\varphi\in\mathbb{L}}\abs{\int_{\R^m}\varphi(x) P(r,-k\tau,\xi,dx) 
- \int_{\R^m}\varphi(x)\widehat{P}(r,-k\tau,\xi,dx)}\\
&+\sup_{\varphi\in\mathbb{L}}\abs{\int_{\R^m}\varphi(x) \widehat{P}(r,-k\tau,\xi,dx) 
- \int_{\R^m}\varphi(x)\widehat{\rho}_r(dx)}\nonumber\\
=&\sup_{\varphi\in\mathbb{L}}\abs{\E[\varphi(X_r^*)-\varphi(X_r^{-k\tau})]}
+\sup_{\varphi\in\mathbb{L}}\abs{\E[\varphi(X_r^{-k\tau})-\varphi(\hx_r^{-k\tau})]}\nonumber\\
&+\sup_{\varphi\in\mathbb{L}}\abs{\E[\varphi(\hx_r^{-k\tau})-\varphi(\hx_r^{*})]}\nonumber\\
\leq&\E\abs{X_r^*-X_r^{-k\tau}} + \E\abs{X_r^{-k\tau}-\hx_r^{-k\tau}} + \E\abs{\hx_r^{-k\tau}-\hx_r^{*}}\nonumber\\
\leq& \norm{X_r^*-X_r^{-k\tau}}_2+\norm{X_r^{-k\tau}-\hx_r^{-k\tau}}_2+\norm{\hat X_r^{-k\tau}-X_r^*}_2. \nonumber
\end{align}
By Theorems \ref{theorem of continuous random periodic solution convergence}, \ref {theorem of discrete random periodic solution convergence}, \ref{theorem of error estimation from -kt to r}, we have for any $\epsilon>0$, there exists $N>0$ such that when $k\geq N$, 
$$ \norm{X_r^*-X_r^{-k\tau}}_2\leq{\epsilon\over 3}, \ \norm{\hat X_r^{-k\tau}-X_r^*}_2\leq {\epsilon\over 3},$$
and 
$$ \norm{X_r^{-k\tau}-\hx_r^{-k\tau}}_2\leq\widetilde{K}\sqrt{\dt}+{\epsilon\over 3}.$$
Then taking $k\geq N$ in (\ref{feng62}), we have
\begin{align*}
d_\mathbb{L}(\rho_r,\widehat{\rho}_r)\leq\widetilde{K}\sqrt{\dt}+\epsilon.
\end{align*}
Note in the above inequality, the left hand side does not depend on $k$ and $\epsilon$ is arbitrary. So (\ref{eqn5.6aa}) is obtained.
\end{proof}
\begin{remark}
There are a number of work about approximating of invariant measures for SDE using Euler-Maruyama method and Milstein method (\cite{mattingly}, \cite{talay}, \cite{talay-tubaro}, \cite{yuanmao}). For finite horizon, the order of weak convergence of Euler-Maruyama method was proved to be 1.0, a significant improvement from the order $0.5$ in the strong convergence (c.f. \cite{5}). However, the order of 1.0 is not guaranteed in the infinite horizon case, see \cite{mattingly} for the case of the invariant measures. On the other hand, in some work such as \cite{talay}, \cite{talay-tubaro}, the order of error of Euler-Maruyama method was managed to increase to 1.0 under the non-degenerate condition. Here we do not have such an assumption, and we have order 0.5 in the weak convergence formulation. However, in the case of the modified 
Milstein method,  we will see that the error is  of order 1.0 in the next theorem. 
Note that the error estimate with the Milstein scheme is also 1.0 in the weak convergence formulation even in the 
non-degenerate case (\cite{talay}, \cite{talay-tubaro}).
\end{remark}
\begin{theorem}
Assume Condition (A), ($1^\prime$) and (2). Consider the modified Milstein scheme (\ref{equation of scheme of discrete random periodic solution by steps for RK1}). Then  the periodic measure $\hat \rho_.$ of the Markov semi-groups generated by the discretised
semi-flow is the weak limit of its transition probability along integral multiples of period, i.e.
$$\hat P(r,-k\tau,\xi)\to \hat \rho_r,\ {\rm as\ k\to \infty},$$
weakly and the error estimate between the approximating periodic measure $\hat \rho_.$ and the exact periodic measure is
 $$d_\mathbb{L}(\rho_r,\widehat{\rho}_r)\leq K^*{\dt},$$
where $ {K^*}$ is independent of $\dt$ and $r$.
\end{theorem}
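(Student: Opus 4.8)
The plan is to follow the proof of Theorem \ref{thm16} essentially verbatim in structure, the only substantive change being that the strong error estimate is now supplied by Theorem \ref{theorem of error estimation from -kt to r for SRK1}, which gives the order-$\dt$ bound $\limsup_{k\to\infty}\norm{X_r^{-k\tau}-\hx_r^{-k\tau}}_2\leq K^*\dt$ in place of the $\sqrt{\dt}$ bound of the Euler--Maruyama case. Since every limiting argument below is driven purely by $L^2$-convergence, the improved order passes straight through to the weak metric $d_{\mathbb L}$.

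First I would establish the weak convergence $\hat P(r,-k\tau,\xi)\to\hat\rho_r$. By the definitions (\ref{5.3}), (\ref{5.4}) and the fact that every $\varphi\in\mathbb{L}$ is $1$-Lipschitz with $\abs{\varphi}\leq 1$, Jensen's inequality gives
$$d_{\mathbb L}(\hat P(r,-k\tau,\xi),\hat\rho_r)=\sup_{\varphi\in\mathbb{L}}\abs{\E[\varphi(\hx_r^{-k\tau})-\varphi(\hx_r^*)]}\leq\E\abs{\hx_r^{-k\tau}-\hx_r^*}\leq\norm{\hx_r^{-k\tau}-\hx_r^*}_2.$$
By Theorem \ref{thm10} the right-hand side tends to $0$ as $k\to\infty$, and since $d_{\mathbb L}$ metrises the weak topology (\cite{watanabe}, \cite{yuanmao}), this yields the asserted weak convergence.

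Next I would bound the distance between the two periodic measures exactly as in (\ref{feng62}): inserting $P(r,-k\tau,\xi)$ and $\hat P(r,-k\tau,\xi)$ and using the same $1$-Lipschitz estimate on test functions and Jensen's inequality splits
$$d_{\mathbb L}(\rho_r,\hat\rho_r)\leq\norm{X_r^*-X_r^{-k\tau}}_2+\norm{X_r^{-k\tau}-\hx_r^{-k\tau}}_2+\norm{\hx_r^{-k\tau}-\hx_r^*}_2.$$
The first and third terms tend to $0$ by Theorem \ref{theorem of continuous random periodic solution convergence} and Theorem \ref{thm10} respectively, so for any $\epsilon>0$ each is below $\epsilon/3$ once $k$ is large; the middle term is controlled by Theorem \ref{theorem of error estimation from -kt to r for SRK1}, giving $\norm{X_r^{-k\tau}-\hx_r^{-k\tau}}_2\leq K^*\dt+\epsilon/3$ for large $k$. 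Letting $k\to\infty$ and then $\epsilon\downarrow 0$ in the $k$-independent left-hand side yields $d_{\mathbb L}(\rho_r,\hat\rho_r)\leq K^*\dt$, with $K^*$ inherited from Theorem \ref{theorem of error estimation from -kt to r for SRK1} and hence independent of $\dt$ and $r$.

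The step I would flag is conceptual rather than computational: all the genuine analytic work is already contained in Theorem \ref{theorem of error estimation from -kt to r for SRK1}. The point worth stressing is that, unlike the finite-horizon situation where the Euler weak order is $1.0$ while its strong order is only $0.5$, here the weak estimate is obtained \emph{directly} from the strong $L^2$ estimate through the $1$-Lipschitz test functions, so the weak rate simply inherits the strong rate. Consequently the modified Milstein scheme's order-$\dt$ strong convergence produces order-$\dt$ weak convergence of the periodic measures, with no further gain or loss.
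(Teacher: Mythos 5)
Your proposal is correct and takes essentially the same route as the paper, which simply states that the proof follows that of Theorem \ref{thm16} with Theorem \ref{theorem of error estimation from -kt to r for SRK1} substituted for Theorem \ref{theorem of error estimation from -kt to r}; your write-up spells out exactly that substitution, including the correct replacement of Theorem \ref{theorem of discrete random periodic solution convergence} by Theorem \ref{thm10} for the discrete pull-back limit. Your closing remark --- that the weak rate here is inherited directly from the strong $L^2$ rate via the $1$-Lipschitz test functions, in contrast to the finite-horizon weak-order analysis --- is consistent with the paper's own discussion in the remark preceding the theorem.
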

\begin{proof}
The proof is similar to the proof of Theorem \ref{thm16}, but using Theorem \ref{theorem of error estimation from -kt to r for SRK1} instead of Theorem \ref{theorem of error estimation from -kt to r}.
\end{proof}

\section{Transformation of the periodic SDE via Lyapunov-Floquet transformation}
In this section, we consider the following m-dimensional system 
\begin{eqnarray}
dX_t^{t_0}&=A(t)X_t^{t_0}dt  + \widetilde{f}(t,X_t^{t_0})dt +\widetilde{g}(t,X_t^{t_0})dW_t,\ t\geq t_0,
\label{equations of original period SDE with At} 
\end{eqnarray}
with $X_{t_0}^{t_0}=\xi$.
We assume that the matrix $A(t)$ is a continuous $\tau$-periodic $m\times m$ real matrix and the functions $\widetilde{f}$ and $\widetilde{g}$ are both $\tau$-periodic in time, i.e.
\begin{align*}
A(t+\tau)=A(t),\ \widetilde{f}(t+\tau,\cdot)=\widetilde{f}(t,\cdot),\ \widetilde{g}(t+\tau,\cdot)=\widetilde{g}(t,\cdot),\ \text{for any}\ t\in\R.
\end{align*}

To solve this problem we need to apply the Floquet theorem to transfer this system to a system with the linear part having a time invariant generator.

\subsection{The transformation}
The well known Floquet theorem can be found in many books, such as \cite{15}. It says that 
if$\ $ $\Phi(t)$ is a fundamental matrix solution of the periodic system $\dot{X}=A(t)X$, then so is $\Phi(t+\tau)$. Moreover, there exists an invertible $\tau$-periodic matrix $P(t)$ such that
$\Phi(t)=P(t)e^{Rt},$
where $R$ is a constant matrix.
The matrix $P(t)$ is called the Lyapunov-Floquet transformation matrix and $X=P(t)Z$ is called the Lyapunov-Floquet transformation.
\begin{proposition}\label{corollary of floquet theorem}
Under Lyapunov-Floquet transformation $X(t)=P(t)Z(t)$, the periodic system (\ref{equations of original period SDE with At}) is transferred to the following system with 
constant coefficient matrix linear part
\begin{eqnarray}
dZ_t^{t_0} &= RZ_t^{t_0}dt + P(t)^{-1} \widetilde{f}(t,P(t)Z_t^{t_0})dt + P(t)^{-1} \widetilde{g}(t,P(t)Z_t^{t_0})dW_t,
\label{equation of transferred from At into constant matrix R 1} 
\end{eqnarray}
with $Z_{t_0}^{t_0}=P(t_0)^{-1}\xi$.
\end{proposition}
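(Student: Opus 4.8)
The plan is to read $X(t)=P(t)Z(t)$ as a (time-dependent, deterministic) change of variables and apply It\^o's formula to $Z_t=P(t)^{-1}X_t^{t_0}$. The decisive structural fact is that the map $(t,x)\mapsto P(t)^{-1}x$ is \emph{linear} in $x$ and $P(t)^{-1}$ is a smooth deterministic matrix function; consequently the second-order It\^o term vanishes and the formula reduces to the ordinary product rule, $dZ_t=\big(\tfrac{d}{dt}P(t)^{-1}\big)X_t^{t_0}\,dt+P(t)^{-1}\,dX_t^{t_0}$, with no It\^o correction.

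First I would record the differential equation satisfied by $P$. Since $\Phi(t)$ is a fundamental matrix solution of $\dot X=A(t)X$, we have $\dot\Phi(t)=A(t)\Phi(t)$; differentiating the Floquet factorisation $\Phi(t)=P(t)e^{Rt}$ and using that $R$ commutes with $e^{Rt}$ yields $\dot P(t)=A(t)P(t)-P(t)R$. Next I would differentiate the inverse: from $P(t)P(t)^{-1}=I$ one gets $\tfrac{d}{dt}P(t)^{-1}=-P(t)^{-1}\dot P(t)P(t)^{-1}$, and substituting the equation for $\dot P$ gives
\begin{align*}
\frac{d}{dt}P(t)^{-1}=RP(t)^{-1}-P(t)^{-1}A(t).
\end{align*}

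Then I would assemble the differential of $Z_t$. Substituting the expression for $\tfrac{d}{dt}P(t)^{-1}$ and the SDE (\ref{equations of original period SDE with At}) into the product rule gives
\begin{align*}
dZ_t=\big(RP(t)^{-1}-P(t)^{-1}A(t)\big)X_t^{t_0}\,dt
+P(t)^{-1}\big[A(t)X_t^{t_0}\,dt+\widetilde f(t,X_t^{t_0})\,dt+\widetilde g(t,X_t^{t_0})\,dW_t\big].
\end{align*}
The two terms carrying $A(t)$ cancel, and rewriting $P(t)^{-1}X_t^{t_0}=Z_t^{t_0}$ and $X_t^{t_0}=P(t)Z_t^{t_0}$ produces exactly (\ref{equation of transferred from At into constant matrix R 1}); evaluating at $t=t_0$ gives the initial condition $Z_{t_0}^{t_0}=P(t_0)^{-1}\xi$.

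The computation itself is routine, so the only genuine point to justify carefully is the absence of an It\^o correction term. I would make this explicit by noting that $P(t)^{-1}$ is deterministic and continuously differentiable, so its quadratic covariation with the semimartingale $X^{t_0}$ vanishes (equivalently, the linearity of $x\mapsto P(t)^{-1}x$ kills the Hessian contribution in It\^o's formula). A secondary remark worth including is that Floquet's theorem guarantees $P(t)$ is invertible and $\tau$-periodic for all $t\in\R$, so $P(t)^{-1}$ is well defined and the transformation is a genuine bijection; hence the two systems are equivalent and, in particular, the random periodicity of one corresponds to that of the other.
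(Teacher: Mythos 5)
Your proposal is correct and is precisely the ``elementary calculation'' the paper alludes to without writing out: the paper's proof of this proposition is the single line that it ``follows some elementary calculations,'' and your derivation of $\dot P(t)=A(t)P(t)-P(t)R$ from the Floquet factorisation, the formula $\frac{d}{dt}P(t)^{-1}=RP(t)^{-1}-P(t)^{-1}A(t)$, and the cancellation of the $A(t)$ terms in the product rule is exactly that computation. Your explicit justification that no It\^o correction appears (because $P(t)^{-1}$ is deterministic and $C^1$ and $x\mapsto P(t)^{-1}x$ is linear) is a worthwhile addition that the paper leaves implicit.
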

\begin{proof}
The proof follows some elementary calculations.
\end{proof}
%

From the periodicity of $P$, we know that$$\Phi(t+\tau)=P(t+\tau)e^{R(t+\tau)}=P(t)e^{Rt}e^{R\tau}=\Phi(t)e^{R\tau}.$$
Since $e^{R+2\pi kiI}=e^R e^{2\pi ki }I=e^R$ for any $k\in \Z$, the constant matrix $R$ is not unique. It is also not necessarily real, even if $e^{R\tau}$ is real. So we need the following corollary to guarantee such a real constant matrix exists.
\begin{corollary}
Let
$B=\frac{R+\overline{R}}{2},\ S(t)=\Phi(t)e^{-Bt}.$
Then $S(t)$ is real and $2\tau$-periodic. Under the transformation $X_t^{t_0}=S(t)Z_t^{t_0}$, the periodic system (\ref{equations of original period SDE with At}) is transferred to the following system with 
constant coefficient matrix linear part
\begin{eqnarray}
dZ_t^{t_0} &= BZ_t^{t_0}dt + S(t)^{-1} \widetilde{f}(t,S(t)Z_t^{t_0})dt + S(t)^{-1} \widetilde{g}(t,S(t)Z_t^{t_0})dW_t,
\label{equation of transferred from At into constant matrix R} 
\end{eqnarray}
with $Z_{t_0}^{t_0}=S(t_0)^{-1}\xi$,
\end{corollary}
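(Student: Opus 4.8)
The plan is to establish the two structural properties of $S$ claimed in the statement and then to perform the change of variables exactly as in Proposition~\ref{corollary of floquet theorem}, now with $S(t)$ playing the role of $P(t)$ and $B$ that of $R$. The realness of $S$ is immediate: $B=\tfrac{R+\overline R}{2}=\operatorname{Re}R$ is a real matrix, so $e^{-Bt}$ is real, and $\Phi(t)$ is a real fundamental matrix of the real linear system $\dot X=A(t)X$; hence $S(t)=\Phi(t)e^{-Bt}$ is a product of real matrices and is real (and invertible, being a product of invertible matrices).

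For the $2\tau$-periodicity I would iterate $\Phi(t+\tau)=\Phi(t)e^{R\tau}$ to get $\Phi(t+2\tau)=\Phi(t)e^{2R\tau}$, so that
\[
S(t+2\tau)=\Phi(t)\,e^{2R\tau}\,e^{-Bt}\,e^{-2B\tau}.
\]
Comparing with $S(t)=\Phi(t)e^{-Bt}$, everything reduces to the single identity $e^{2B\tau}=e^{2R\tau}$: once this is known, $e^{2R\tau}$, $e^{-Bt}$ and $e^{-2B\tau}$ are all exponentials of scalar multiples of $B$, hence mutually commute, and the right-hand side collapses to $\Phi(t)e^{-Bt}=S(t)$. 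Writing $C:=e^{R\tau}$, which is real (it is the monodromy matrix, as already noted in the text), and using $\overline{e^{\tau R}}=e^{\tau\overline R}$ together with $\overline C=C$, the target identity reads $e^{2B\tau}=e^{\tau R}e^{\tau\overline R}=C\,\overline C=C^2=e^{2R\tau}$.

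The step I expect to be the main obstacle is the very first equality here, $e^{2B\tau}=e^{\tau R}e^{\tau\overline R}$: since $2B\tau=\tau(R+\overline R)$, this is the identity $e^{X+Y}=e^Xe^Y$, which fails unless $R$ and $\overline R$ commute, and the Floquet exponent $R$ is genuinely complex. I would resolve this by exploiting the non-uniqueness of $R$: the logarithm can be chosen as a primary matrix function of the real monodromy $C$, i.e. $R=p(C)$ for a (complex-coefficient) polynomial $p$ obtained from the holomorphic functional calculus. Conjugating and using $\overline C=C$ gives $\overline R=\overline p(C)$, again a polynomial in $C$; two polynomials in the same matrix commute, so $R\overline R=\overline R R$ and the computation of the previous paragraph is justified.

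Finally, with $S$ real, $2\tau$-periodic, invertible and $C^1$ in $t$, the transformation is a routine computation paralleling Proposition~\ref{corollary of floquet theorem}. Differentiating $S(t)=\Phi(t)e^{-Bt}$, using $\Phi'(t)=A(t)\Phi(t)$ and the commutation of $B$ with $e^{-Bt}$, yields $S'(t)=A(t)S(t)-S(t)B$. Since $S$ is deterministic and of finite variation, substituting $X_t^{t_0}=S(t)Z_t^{t_0}$ into (\ref{equations of original period SDE with At}) gives $dX_t^{t_0}=S'(t)Z_t^{t_0}\,dt+S(t)\,dZ_t^{t_0}$ with no It\^o correction; cancelling the common term $A(t)S(t)Z_t^{t_0}\,dt$ and left-multiplying by $S(t)^{-1}$ produces precisely (\ref{equation of transferred from At into constant matrix R}), with $Z_{t_0}^{t_0}=S(t_0)^{-1}\xi$.
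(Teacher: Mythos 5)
Your proof is correct and follows essentially the same route as the paper's: the paper likewise derives realness of $S$ from realness of $B$, obtains $2\tau$-periodicity by noting $C=e^{R\tau}$ is real, writing $C^2=e^{R\tau}e^{\overline{R}\tau}=e^{2B\tau}$, and computing $S(t+2\tau)=\Phi(t)C^2e^{-2B\tau}e^{-Bt}=S(t)$, and then refers the change of variables back to the preceding proposition, whose omitted ``elementary calculations'' are exactly your It\^o-free computation via $S'(t)=A(t)S(t)-S(t)B$ (legitimate, as you note, because $S$ is deterministic, $C^1$, hence of finite variation, so no It\^o correction arises). The one point where you go beyond the paper is the step you correctly flagged as delicate: the paper asserts $e^{R\tau}e^{\overline{R}\tau}=e^{2B\tau}$ without comment, and since $2B\tau=\tau(R+\overline{R})$ this is an instance of $e^{X+Y}=e^Xe^Y$, which does require $R\overline{R}=\overline{R}R$; this can fail for an arbitrary logarithm $R$ of the monodromy $C$ (already for $C=I$ there are choices of $R$ with $e^{\tau R}=I$ for which $R$ and $\overline{R}$ do not commute, and then $e^{2B\tau}\neq C^2$). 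Your repair --- exploit the non-uniqueness of $R$, noted in the paper itself, to take $\tau R$ to be a primary-matrix-function logarithm of the real matrix $C$, hence a polynomial $p(C)$, so that $\tau\overline{R}=\overline{p}(C)$ is again a polynomial in $C$ and the two commute --- is the standard and correct way to make this step rigorous; it closes a genuine gap that the paper's own write-up passes over silently, and it makes precise in what sense the corollary holds (namely, for this admissible choice of $R$).
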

\begin{proof}
Because $A(t)$ is real, so the matrix $C=e^{R\tau}=\Phi(\tau)\Phi^{-1}(0)$ is real. Thus for the real matrix $B=\frac{R+\overline{R}}{2}$, 
$C^2=e^{R\tau}e^{\overline{R}\tau}=e^{2B\tau}.$
Note $S(t)$ is real since $B$ is real. And notice that
\begin{align*}
S(t+2\tau)=\Phi(t+2\tau)e^{-B(t+2\tau)}=\Phi(t)C^2 e^{-2B\tau} e^{-Bt}=\Phi(t) e^{-Bt}=S(t).
\end{align*}
Then we can obtain the time invariant system in a similar way as in the Corollary \ref{corollary of floquet theorem}. The only difference is that the system with real constant coefficient matrix linear part becomes $2\tau$-periodic.
\end{proof}

\subsection{Convergence theorem of the periodic parameter matrix system}
\hskip -0.6cm {\bf Condition (A$'$)}. {\it The matrix function $A(t)$ is $\tau$-periodic, the corresponding matrix $B$ is symmetric with eigenvalues satisfying 
$0>\lambda_1\geq\lambda_2\geq\ldots\geq\lambda_m.$}

Because $S(t)$ is continuous and periodic, so $S(t)$ is bounded. The periodicity and continuity of $S(t)^{-1}$ is obtained from the properties of $S(t)$, it is concluded that $S(t)^{-1}$ is bounded as well. Thus there exists a constant $M$ such that
$\abs{S(t)^{-1}}\abs{S(t)}\leq \gamma.$
For the periodic parameter matrix system, we give the following condition \\
\textbf{Condition (1$'$)}. {\it Assume there exists a constant $\tau>0$ such that for any $t\in \R$, $x\in \R^m$, 
$\widetilde f(t+\tau,x)=\widetilde f(t,x),\ 
\widetilde g(t+\tau,x)=\widetilde g(t,x).$
There exist constant $\widetilde {C_0}, \widetilde{\beta_1},\widetilde{\beta_2}>0$ with $\widetilde{\beta_1}\gamma+\frac{\widetilde{\beta_2}^2\gamma^2}{2}<\abs{\lambda_1}$, such that for any $s,t\in\R$ and $x,y\in\R^m$, 
\begin{align*}
\abs{\widetilde{f}(s,x)-\widetilde{f}(t,y)}\leq\widetilde{C_0}\abs{s-t}^{1/2}+\widetilde{\beta_1}\abs{x-y},&\\
\abs{\widetilde{g}(s,x)-\widetilde{g}(t,y)}\leq \widetilde{C_0}\abs{s-t}^{1/2}+\widetilde{\beta_2}\abs{x-y}.
\end{align*}
From this condition it follows that for any $x\in\R^m$, the linear growth condition also holds
$\abs{\widetilde{f}(t,x)}\leq\widetilde{\beta_1}\abs{x}+\widetilde{C_1},\ \abs{\widetilde{g}(t,x)}\leq\widetilde{\beta_2}\abs{x}+\widetilde{C_2},$
where the constants $\widetilde{C_1},\widetilde{C_2}>0$, which are independent of time $t$.
}

\begin{theorem}\label{theorem of continuous random periodic solution convergence with At}
Assume that Conditions ($A^{\prime}$), ($1^{\prime}$). Then there exists a unique random periodic solution $X_r^*\in L^2(\Omega)$ of period $2\tau$ such that for any initial value $\xi(\omega)$ satisfying Condition (2), the solution of (\ref{equations of original period SDE with At}) satisfies
$\lim\limits_{k\rightarrow\infty}\norm{X_r^{-2k\tau}(\xi)-X_r^*}_2=0.$
\end{theorem}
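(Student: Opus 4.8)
The plan is to reduce the variable-coefficient system (\ref{equations of original period SDE with At}) to one whose linear part is a constant symmetric negative-definite matrix, to which Theorem \ref{theorem of continuous random periodic solution convergence} applies almost verbatim, and then to transform the resulting random periodic solution back. Concretely, I would invoke the real Lyapunov--Floquet transformation $X_t^{t_0}=S(t)Z_t^{t_0}$ constructed in the preceding corollary, under which the system becomes (\ref{equation of transferred from At into constant matrix R}), namely
\begin{align*}
dZ_t^{t_0}=BZ_t^{t_0}dt+F(t,Z_t^{t_0})dt+G(t,Z_t^{t_0})dW_t,
\end{align*}
with $F(t,z):=S(t)^{-1}\widetilde f(t,S(t)z)$ and $G(t,z):=S(t)^{-1}\widetilde g(t,S(t)z)$. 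Since $S$ is real and $2\tau$-periodic, the coefficients $F,G$ are $2\tau$-periodic in time; this is precisely why the object produced is random periodic of period $2\tau$ rather than $\tau$.

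Next I would verify that this transformed system satisfies the hypotheses of Theorem \ref{theorem of continuous random periodic solution convergence} with $\tau$ replaced by $2\tau$. By Condition (A$'$), $B$ is symmetric with all eigenvalues negative, so it plays the role of $A$ in Condition (A) with the same $\abs{\lambda_1}$. For the nonlinearities, boundedness of $S$ and $S^{-1}$ gives $\abs{S(t)^{-1}}\abs{S(t)}\leq\gamma$, whence
\begin{align*}
\abs{F(t,x)-F(t,y)}\leq\abs{S(t)^{-1}}\,\widetilde{\beta_1}\abs{S(t)(x-y)}\leq\gamma\widetilde{\beta_1}\abs{x-y},
\end{align*}
and likewise $\abs{G(t,x)-G(t,y)}\leq\gamma\widetilde{\beta_2}\abs{x-y}$. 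Thus the effective spatial Lipschitz constants are $\gamma\widetilde{\beta_1}$ and $\gamma\widetilde{\beta_2}$, and the assumption $\widetilde{\beta_1}\gamma+\frac{\widetilde{\beta_2}^2\gamma^2}{2}<\abs{\lambda_1}$ built into Condition (1$'$) is exactly the dissipativity balance $\gamma\widetilde{\beta_1}+\frac{(\gamma\widetilde{\beta_2})^2}{2}<\abs{\lambda_1}$ needed to choose $\alpha$ with $\alpha I+B$ non-positive-definite. Linear growth of $F,G$ follows from that of $\widetilde f,\widetilde g$ and the uniform bounds on $S,S^{-1}$. I would emphasise that the time-regularity (the $\abs{s-t}^{1/2}$ term) of Condition (1) is never invoked in the proof of Theorem \ref{theorem of continuous random periodic solution convergence}: that proof rests only on the $L^2$-bound of Lemma \ref{lemma of continuous random periodic solution bounded} and the contraction estimate of Lemma \ref{lemma of continuous random periodic solution wrt different initial values}, both of which use only spatial Lipschitz continuity, linear growth and dissipativity. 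Hence the extra $\abs{s-t}\abs{x}$ terms that differentiating $S(t)$ introduces into the modulus of continuity of $F,G$ are harmless for existence.

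With the hypotheses in place, Theorem \ref{theorem of continuous random periodic solution convergence} applied along the lattice $\{-2k\tau\}$ yields a unique $Z_r^*\in L^2(\Omega)$ with $\norm{Z_r^{-2k\tau}(S(0)^{-1}\xi)-Z_r^*}_2\to0$ (the pull-back initial value being $Z_{-2k\tau}^{-2k\tau}=S(0)^{-1}\xi$ by $2\tau$-periodicity of $S$, which satisfies Condition (2) since $S^{-1}$ is bounded), together with $Z_{r+2\tau}^*(\omega)=Z_r^*(\theta_{2\tau}\omega)$ and the semiflow identity. I would then set $X_r^*:=S(r)Z_r^*$. Because $S$ is bounded, $\norm{X_r^{-2k\tau}(\xi)-X_r^*}_2\leq\abs{S(r)}\,\norm{Z_r^{-2k\tau}-Z_r^*}_2\to0$, giving the stated convergence; and $2\tau$-periodicity of $S$ gives $X_{r+2\tau}^*(\omega)=S(r+2\tau)Z_{r+2\tau}^*(\omega)=S(r)Z_r^*(\theta_{2\tau}\omega)=X_r^*(\theta_{2\tau}\omega)$, while $X=SZ$ transports the cocycle relation, so $X^*$ is random periodic of period $2\tau$. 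Uniqueness and independence of the initial value are inherited from the corresponding properties of $Z^*$ through the invertible bounded map $S(r)$.

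The main obstacle I anticipate is bookkeeping around the change of variables rather than any new analytic difficulty: one must check that the dissipativity threshold survives the transformation, which is exactly why Condition (1$'$) carries the factors $\gamma$ and $\gamma^2$, and one must track that the natural period doubles to $2\tau$ because a real Floquet factor $S$ is in general only $2\tau$-periodic. A secondary point to record cleanly is that the additional time-modulus terms coming from $S(t)$ do not enter the existence argument, so no strengthening of the hypotheses is required.
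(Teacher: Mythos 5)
Your proposal is correct and follows essentially the same route as the paper's proof: both reduce (\ref{equations of original period SDE with At}) to the time-invariant system via the real Lyapunov--Floquet transformation $X_t^{t_0}=S(t)Z_t^{t_0}$, verify the hypotheses of Theorem \ref{theorem of continuous random periodic solution convergence} (with period $2\tau$) for the transformed coefficients using the bound $\abs{S(t)^{-1}}\abs{S(t)}\leq\gamma$ to get effective Lipschitz constants $\gamma\widetilde{\beta_1},\gamma\widetilde{\beta_2}$ satisfying the dissipativity balance, and then set $X_r^*=S(r)Z_r^*$, transporting convergence and $2\tau$-random-periodicity back through the bounded, $2\tau$-periodic $S$. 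Your explicit observation that the $\abs{s-t}^{1/2}$ time modulus of Condition (1) is never used in the existence argument (only in the error estimates) is a sound point the paper leaves implicit.
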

\begin{proof}
We only need to verify that the corresponding time invariant system
\begin{eqnarray}\label{eqn5.5}
dZ_t^{t_0} &= BZ_t^{t_0}dt + f\rbrac{t,Z_t^{t_0}}dt + g\rbrac{t,Z_t^{t_0}}dW_t,
\end{eqnarray}
with $Z_{t_0}^{t_0}=S(t_0)^{-1}\xi$,
where
$f\rbrac{t,x}=S(t)^{-1}\widetilde{f}(t,S(t)x),\ g\rbrac{t,x}=S(t)^{-1}\widetilde{g}(t,S(t)x),$
satisfies the conditions of Theorem \ref{theorem of continuous random periodic solution convergence}. It is easy to see that
$f(t+2\tau,x)=f(t,x),\ 
g(t+2\tau,x)=g(t,x).
$
For Condition (1), the largest eigenvalue of the matrix $B$ is $\lambda_1$. By the Lipschitz condition on function $\widetilde{f}$ and $\widetilde{g}$, we have following result in the time invariant system
$\abs{f(t,x)-f(t,y)}
\leq \widetilde{\beta_1}\gamma\abs{x-y}.$
This means the function $f$ will preserve the Lipschitz property with constant $\beta_1=\widetilde{\beta_1} \gamma$. Similarly we can prove that the function $g$ possesses the Lipschitz condition with constant $\beta_2=\widetilde{\beta_2} \gamma$. Meanwhile, from Condition (1$^{\prime}$), we have 
$\beta_1+\frac{\beta_2^2}{2}<\abs{\lambda_1}.$
Moreover, for any $x\in\R^m$,
\begin{align*}
\abs{f(t,x)}=\abs{S(t)^{-1}\widetilde{f}(t,S(t)x)}\leq\widetilde{\beta_1}\abs{S(t)^{-1}}\abs{S(t)x}+\abs{S(t)^{-1}}\widetilde{C_1 }\leq \beta_1\abs{x} + C_1.
\end{align*}
Therefore we can verify the linear growth property of $f$ and $g$ with the constants $C_1,C_2>0$. The constants $\beta_1$ and $\beta_2$ are both independent of time $t$.
For Condition (2), the initial value of the time invariant system will preserve the boundedness because of the boundedness of $S(t)^{-1}$.
According to Theorem \ref{theorem of continuous random periodic solution convergence}, there exists a random periodic solution $Z_r^*\in L^2(\Omega)$ with period $2\tau$ such that
$\lim_{k\rightarrow\infty}\norm{Z_r^{-2k\tau}(\xi)-Z_r^*}_2=0.$
It turns out that
\begin{align*}
\lim_{k\rightarrow\infty}\norm{X_r^{-2k\tau}(\xi)-X_r^*}_2 
\leq \norm{S(r)}\lim_{k\rightarrow\infty}\norm{Z_r^{-2k\tau}(\xi)-Z_r^*}_2
=0.
\end{align*}
The $2\tau$-periodicity of $S(r)$ and $Z_r^{-2k\tau}$ give us the random periodicity of solution $X^*(r,\omega)$. So $X_r^*$ is a random periodic solution of (\ref{equations of original period SDE with At}) of period $2\tau$.
\end{proof}

\subsection{Numerical approximation scheme and error estimate}
With the existence of the random periodic solutions, we now consider the scheme to simulate the process $Z$ of equation (\ref{equation of transferred from At into constant matrix R}). Similar as before, we can consider strong and weak convergence in Euler-Maruyama and modified Milstein methods. Due to the length of
the paper,  we only consider strong convergence in the Euler scheme given by
\begin{eqnarray}&&\widehat Z^{-2k\tau}_{-2k\tau+(i+1)\Delta t}\notag\\
&=&\widehat Z^{-2k\tau}_{-2k\tau+i\Delta t}+[B\widehat Z^{-2k\tau}_{-2k\tau+i\Delta t}+S(i\dt)^{-1}\widetilde{f}(i\dt,S(i\dt)\widehat Z^{-2k\tau}_{-2k\tau+i\Delta t})]\dt\notag\\
                   &&+S(i\dt)^{-1}\widetilde{g}(i\dt,S(i\dt)\widehat Z^{-2k\tau}_{-2k\tau+i\Delta t})\rbrac{W_{-2k\tau+(i+1)\dt}-W_{-2k\tau+i\dt}}\label{equation of discrete random periodic solution with At}.
\end{eqnarray}

\begin{theorem}\label{thm5.3}
Assume Conditions ($A^{\prime}$), ($1^{\prime}$) and (2), $S(t)\in C^1(\R)$. Then there exists $\widehat{Z}^*_r$, which is a random periodic solution of period $2\tau$ for discrete random dynamical system generated from (\ref{eqn5.5}), such that
$$\lim_{k\rightarrow\infty}\norm{X_r^{-2k\tau}-S(r)\widehat{Z}_{r}^{-2k\tau}}_2\leq \widetilde{K}\sqrt{\Delta { t}},\ 
{\rm and}\ 
\norm{X_r^*-S(r)\widehat{Z}_{r}^*}_2\leq \widetilde{K}\sqrt{\Delta { t}},\ r\in[0,T],
$$
for a constant $\widetilde{K}>0$, which is independent of $\dt$, where $X_r^*$ is the exact random periodic solution of (\ref{equations of original period SDE with At}).
\end{theorem}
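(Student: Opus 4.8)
The plan is to reduce the variable-coefficient problem \eqref{equations of original period SDE with At} entirely to the constant-linear-part case already treated in Sections 3 and 4, via the Lyapunov--Floquet substitution $X_t^{t_0}=S(t)Z_t^{t_0}$. Under this change of variables $Z$ solves the time-invariant-linear-part system \eqref{eqn5.5} with $f(t,x)=S(t)^{-1}\widetilde f(t,S(t)x)$ and $g(t,x)=S(t)^{-1}\widetilde g(t,S(t)x)$, and the decisive observation is that the scheme \eqref{equation of discrete random periodic solution with At} is \emph{exactly} the Euler--Maruyama scheme of Section 3 for this $Z$-system, since it evaluates the coefficients at the grid points $i\dt$. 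Hence, once the standing hypotheses are verified for $(B,f,g)$, I can invoke Theorem \ref{theorem of discrete random periodic solution convergence} (applied with period $2\tau$, which is legitimate because $\dt=\tau/n$ makes $2\tau=2n\dt$ an integer multiple of the mesh) to obtain the approximating random periodic solution $\widehat Z^*_r$, and Theorem \ref{theorem of error estimation from -kt to r} together with its corollary to bound the $Z$-error by $\sqrt{\dt}$; transforming back through the bounded factor $S(r)$ then delivers both claimed estimates.

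The verification of the hypotheses for $(B,f,g)$ repeats the proof of Theorem \ref{theorem of continuous random periodic solution convergence with At}: $B$ is symmetric and satisfies Condition~(A$'$), the spatial Lipschitz constants are $\beta_1=\widetilde\beta_1\gamma$ and $\beta_2=\widetilde\beta_2\gamma$ with $\beta_1+\tfrac{\beta_2^2}{2}<\abs{\lambda_1}$, and linear growth and $2\tau$-periodicity carry over. The genuinely new ingredient---and the reason the hypothesis $S\in C^1(\R)$ is imposed---is the temporal $\tfrac12$-Hölder regularity feeding into Lemma \ref{lemma of order of dt between different times} and hence into the error estimate. I would establish it through the splitting
\[
f(s,x)-f(t,x)=\bigl(S(s)^{-1}-S(t)^{-1}\bigr)\widetilde f(s,S(s)x)+S(t)^{-1}\bigl(\widetilde f(s,S(s)x)-\widetilde f(t,S(t)x)\bigr),
\]
bounding $\abs{S(s)^{-1}-S(t)^{-1}}$ and $\abs{S(s)-S(t)}$ by a multiple of $\abs{s-t}$ (which $C^1$-ness plus $2\tau$-periodicity makes globally Lipschitz in time) and using the $\widetilde C_0\abs{s-t}$ time-modulus of $\widetilde f$, with the analogous argument for $g$. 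I expect this to be the main obstacle: the first term carries a factor $\widetilde f(s,S(s)x)$ of linear growth in $x$, so the literal Condition~(1) bound with an $x$-uniform time-modulus fails. The resolution is that in Lemma \ref{lemma of continuous and discrete random periodic solution error estimation from -kt to 0} and Lemma \ref{lemma of order of dt between different times} the coefficients are only ever evaluated along solution trajectories, which are uniformly $L^2$-bounded by Lemma \ref{lemma of continuous random periodic solution bounded} and Lemma \ref{lemma of discrete random periodic solution bounded}; thus the linear-in-$x$ part of the time-modulus is absorbed into those bounds and the $\sqrt{\dt}$ error estimate proceeds unchanged on $r\in[0,T]$.

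With the hypotheses secured the conclusion is immediate. Theorem \ref{theorem of error estimation from -kt to r} applied to the $Z$-system yields $\limsup_{k\to\infty}\norm{Z_r^{-2k\tau}-\widehat Z_r^{-2k\tau}}_2\le \widetilde K_0\sqrt{\dt}$ with $\widetilde K_0$ independent of $\dt$ and $r$, and its corollary gives $\norm{Z_r^*-\widehat Z_r^*}_2\le \widetilde K_0\sqrt{\dt}$. Since $X_r^{-2k\tau}=S(r)Z_r^{-2k\tau}$ and $X_r^*=S(r)Z_r^*$, and since $\sup_r\abs{S(r)}=:\gamma_S<\infty$ by continuity and $2\tau$-periodicity of $S$, I have
\[
\norm{X_r^{-2k\tau}-S(r)\widehat Z_r^{-2k\tau}}_2\le \gamma_S\norm{Z_r^{-2k\tau}-\widehat Z_r^{-2k\tau}}_2,
\]
and likewise for the random periodic solutions; taking $\limsup_{k\to\infty}$ and setting $\widetilde K=\gamma_S\widetilde K_0$ establishes both bounds. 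Finally, the $2\tau$-periodicity of $\widehat Z_r^*$ from Theorem \ref{theorem of discrete random periodic solution convergence} transfers, through the $2\tau$-periodicity of $S$, into the random periodicity of the reconstructed approximation $S(r)\widehat Z_r^*$.
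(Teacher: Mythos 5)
Your proposal follows essentially the same route as the paper's own proof: transform via $X_t=S(t)Z_t$, apply Theorem \ref{theorem of discrete random periodic solution convergence} and Theorem \ref{theorem of error estimation from -kt to r} to the constant-matrix $Z$-system of (\ref{eqn5.5}), and pull both estimates back through the bounded $2\tau$-periodic factor $S(r)$ by the triangle inequality. You are in fact more careful than the paper, which invokes those theorems without verifying the $x$-uniform time-modulus part of Condition (1) for the transformed coefficients $f(t,x)=S(t)^{-1}\widetilde f(t,S(t)x)$ --- exactly the point where your $C^1$-Lipschitz-in-time argument for $S$, $S^{-1}$ combined with the uniform $L^2$-boundedness of trajectories (Lemmas \ref{lemma of continuous random periodic solution bounded} and \ref{lemma of discrete random periodic solution bounded}) supplies the missing justification.
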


\begin{proof}
By Theorem \ref {theorem of discrete random periodic solution convergence}, there exists $\hat Z_r^*\in L^2(\Omega)$ such that 
$\limsup\limits_{k \rightarrow \infty}\norm{\widehat{Z}_{r}^{-2k\tau}-\widehat{Z}_{r}^*}_2\\=0,$
where $\widehat{Z}^*_r$ is the random periodic solution of period $2\tau$ for discrete random dynamical system generated from (\ref{eqn5.5}).
According to Theorem \ref{theorem of error estimation from -kt to r}, we have the conclusion that there exists a constant $K_1>0$ such that
$\lim_{k\rightarrow\infty}\norm{X_r^{-2k\tau}-S(r)\widehat{Z}_{r}^{-2k\tau}}_2\leq K_1\norm{S(r)}_2\sqrt{\Delta {\tilde t}}\leq \widetilde K\sqrt{\Delta {t}}.$
Thus it follows
that 
\begin{align*}
\norm{X_r^*-S(r)\widehat{Z}_{r}^*}_2
     \leq&\limsup_{k \rightarrow \infty}\norm{X_r^*-X_r^{-2k\tau}}_2+  \limsup_{k \rightarrow \infty}\norm{X_r^{-2k\tau}-S(r)\widehat{Z}_{r}^{-2k\tau}}_2\\
     &+  \limsup_{k \rightarrow \infty}\norm{S(r)\widehat{Z}_{r}^{-2k\tau}-S(r)\widehat{Z}_{r}^*}_2
     \leq \widetilde{K}\sqrt{\Delta{t}}. 
\end{align*}
\vskip-0.9cm \hfill \end{proof}\vskip15pt

\section*{Acknowledgement} We sincerely thank the anonymous referees for their constructive comments, which result in significant improvements of the paper. CF and HZ would like to acknowledge the financial support of Royal Society 
Newton Advanced Fellowship grant NA150344.

\bibliographystyle{siamplain}
\bibliography{references}

\end{document}